 \newtheorem{thm}{Theorem}[section]
 \newtheorem{cor}[thm]{Corollary}
 \newtheorem{lem}[thm]{Lemma}
 \newtheorem{rmk}[thm]{Remark}
 \newtheorem{prop}[thm]{Proposition}
 \newtheorem{obs}[thm]{Observation}
 \theoremstyle{definition}
 \newtheorem{defn}[thm]{Definition}
 \newtheorem{exmp}[thm]{Example}
\numberwithin{equation}{section}
 \newcommand{\set}[1]{\left\{#1\right\}}
\def\ps@pprintTitle{%
 \let\@oddhead\@empty
 \let\@evenhead\@empty
 \def\@evenfoot{}%
 \let\@evenfoot\@oddfoot }
\begin{document}

\begin{frontmatter}



\title{On characterizing proper max-point-tolerance graphs}
\author{Sanchita Paul}
\address{Department of Mathematics, Jadavpur University, Kolkata - 700 032, India.}
\ead{sanchitajumath@gmail.com}
\cortext[]{corresponding author}

\date{}

\begin{titlepage}


\null
\vspace{-4.5em}


\begin{abstract}
\noindent
{\footnotesize Max-point-tolerance graphs (MPTG) was introduced by Catanzaro et al. in 2017 as a generalization of interval graphs. This graph class has many practical applications in study of human genome as well as in signal processing for networks. The same class of graphs were also studied in the name of $p$-BOX($1$) graphs by Soto and Caro in 2015. In our article, we consider a natural subclass of max-point-tolerance graphs namely, {\em proper max-point-tolerance graphs} (proper MPTG) where intervals associated to the vertices are not contained in each other properly. We present the first characterization theorem of this graph class by defining certain linear ordering on the vertex set. In course of this study we prove proper max-point-tolerance graphs are asteroidal triple free and perfect. We also find proper max-point-tolerance graphs are equivalent to unit max-point-tolerance graphs. Further we note that MPTG (proper MPTG) and max-tolerance graphs (proper max-tolerance graphs) are incomparable. In conclusion we demonstrate relations between proper MPTG with other variants of MPTG and max-tolerance graphs.
}
\end{abstract}

\vspace{0.5em}\noindent
\begin{keyword}
{\footnotesize Interval graph \sep proper interval graph \sep tolerance graph \sep max-tolerance graph \sep max-point-tolerance graph.}
\smallskip
\MSC[2010] 05C62 \sep 05C75
\end{keyword}
\end{titlepage}

\end{frontmatter}

\section{Introduction}

\noindent A graph $G=(V,E)$ is an \textit{interval graph} if each vertex is mapped to an interval on real line in such a way so that any two of the vertices are adjacent if and only if their corresponding intervals intersect. 
Many people \cite{Fish,Fulkerson} have done the
extensive research work on interval graphs for its world-wide practical applications in Computer Science and Discrete Mathematics. 
In 1962 Lekkerkerker and Boland \cite{Lekker} first characterize this graph class by proving them chordal and asteroidal triple free.
There after many combinatorial problems like finding maximal clique, independent set, coloring problem, domination and most importantly recognition has been solved in linear time for interval graphs.

\vspace{0.2em}  

\noindent Due to its significant applications in many theoritical and practical situations, interval graphs was generalized to several variations \cite{BDGS, Sg, Sen}. One of the most relevant variation of this graph class related to this article is \textit{tolerance graph} which was first introduced by Golumbic and Monma \cite{Golumbic1} in 1982.
A simple undirected graph $G=(V,E)$ is a {\em min-tolerance graph} (typically known as \textit{tolerance graphs}) if each vertex $u\in V$ corresponds to a real interval $I_u$ and a positive real number $t_u$, called {\em tolerance}, such that $uv$ is an edge of $G$ if and only if $|I_u\cap I_v|\geq \min \set{t_u,t_v}$. Golumbic and Trenk \cite{Golumbic2} introduced \textit{max-tolerance graphs} where each vertex $u\in V$ corresponds to a real interval $I_u$ and a  positive real number $t_u$ (known as tolerance) such that $uv$ is an edge of $G$ if and only if $|I_u\cap I_v|\geqslant \max\set{t_u,t_v}$. For max-tolerance graphs, we may assume $t_{u}\leq |I_{u}|$ for each $u\in V$ since otherwise $u$ becomes isolated. Max-tolerance graphs having a  representation in which no interval properly contains another are called {\em proper max-tolerance graphs}. Some combinatorial problems such as finding maximal cliques were obtained in polynomial time whereas the recognition problem was proved to be NP-hard \cite{Kaufmann} for max-tolerance graphs in 2006. Also a geometrical connection of max-tolerance graphs to semi-squares was found by Kaufmann et al. \cite{Kaufmann}. The research work by Golumbic and Trenk \cite{Golumbic2} can be referred to for more information on tolerance graphs. 

\vspace{0.2em}
\noindent In 2015 Soto and Caro \cite{Soto} introduced a new graph class, namely {\em $p$-BOX} graphs where each vertex corresponds to a box and a point inside it in the Euclidean $d$-dimensional space. Any two vertices are adjacent if and only if each of the corresponding points
are in the intersection of their respective boxes. When the dimension is one this graph class is denoted by $p$-BOX($1$). In 2017 Catanzaro et al.~\cite{Catanzaro} studied this dimension one graphs independently
by giving it a different name, {\em max-point-tolerance graphs} (MPTG)
where each vertex $u\in V$ corresponds to a pair of an interval and a point $(I_u,p_u)$, where $I_u$ is an interval on the real line and $p_u\in I_u$, such that $uv$ is an edge of $G$ if and only if $\set{p_u,p_v}\subseteq I_u\cap I_v$. More precisely each pair of intervals can ``tolerate'' a non-empty intersection without creating an edge until 
at least one distinguished point does not fall in 
the intersection. Then $G$ is said to be {\em represented} by $\{(I_{v},p_{v})|v\in V\}$.
They characterize MPTG by defining a special linear ordering to its vertex set. The graphs MPTG have a number of practical applications in human genome studies for DNA scheduling and in modelling of telecommunication networks for sending and receiving messages \cite{Catanzaro}. Recently in \cite{san} 
{\em central-max-point-tolerance graphs} (central MPTG) has been studied 
 considering $p_{u}$ as the center point of $I_{u}$ for each $u\in V$.
In course of this study the class of central MPTG graphs are proven to be same as {\em unit max-tolerance graphs} where each interval possesses unit length. 
\vspace{0.2em}

\noindent A natural and well studied subclass of interval graphs is the class of \textit{proper interval graphs} where no interval contains other properly. This graph class has various characterizations in terms of linear ordering of its vertices, consecutive ones's property of its associated augmented adjacency matrix, forbidden graph structure ($K_{1,3}$) etc \cite{G}. It is known \cite{Bogart1} that proper interval graphs are same as \textit{unit interval graphs} where each interval possesses same length. So the natural question arises that when the intervals are proper, what will be the characterization of MPTG.

\vspace{0.2em}
\noindent In this article we introduce {\em proper-max-point-tolerance graphs} (proper MPTG). It is a MPTG having a representation where no interval is properly contained in other. We find this graph class to be asteroidal triple free and perfect.
We obtain the first characterization theorem of this graph class by
introducing certain linear ordering on its vertex set, which can be an independent interest of study for the class of proper MPTG graphs. 
Interestingly we note that interval graphs form a strict subclass of proper MPTG. Analogous to unit interval graphs we define 
{\em unit-max-point-tolerance graphs} (unit MPTG) as an MPTG where all the intervals have equal length. Next we show proper MPTG graphs are same as unit MPTG. We investigate the connection between 
max-tolerance graphs (proper max-tolerance graphs) and MPTG (proper MPTG) and able to prove that these graph classes as incomparable. Incidentally we settle a query raised in the book \cite{Golumbic2} by Golumbic and Trenk that whether unit max-tolerance graphs are same as proper max-tolerance graphs or not.
In Conclusion section we show the relations between the subclasses of MPTG and max-tolerance graphs related to proper MPTG.

\section{Preliminaries}
\noindent A matrix whose entries are only zeros and ones is a {\em binary} matrix. For a simple undirected graph $G=(V,E)$ we call a binary matrix as the {\em augmented adjacency matrix} of $G$ if we replace all principal diagonal elements of the adjacency matrix of $G$ by one \cite{G}.

\vspace{0.3em}
\noindent The following characterization of MPTG is known:

\begin{thm} \label{mptg1} \cite{Catanzaro, san}
Let $G=(V,E)$ be a simple undirected graph. Then the following are equivalent.
\begin{enumerate}
\item $G$ is a MPTG.

\item There is an ordering of vertices such that for every quadruplet $x,u,v,y$ the following condition holds:\label{4pt}
\begin{equation}\label{4p1}
\text{If}\hspace{0.5em} x<u<v<y,\ xv,uy\in E \Longrightarrow uv\in E. \hspace{1em} (4\operatorname{-}point \hspace{0.3em} condition)
\end{equation}

\item  There is an ordering of vertices of $G$ such that for any $u<v$, $u,v\in V$,
\begin{equation}
uv\not\in E \Longrightarrow uw\not\in E\text{ for all }w>v\text{ or, }wv\not\in E\text{ for all }w<u.
\end{equation}

\item There exists an ordering of vertices such that every $0$ above the principal diagonal of the augmented adjacency matrix $A^{*}(G)$ has either all entries right to it are $0$ (right open) or, all entries above it are $0$ (up open).
\end{enumerate}
\end{thm}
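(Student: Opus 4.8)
The plan is to prove the cycle $(1)\Rightarrow(2)\Rightarrow(1)$ and then observe that $(2)\Leftrightarrow(3)$ and $(3)\Leftrightarrow(4)$ are little more than reformulations, so that all four statements are equivalent.

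For $(1)\Rightarrow(2)$ I would start from a representation $\set{(I_v,p_v)\mid v\in V}$ and order the vertices so that the points are non-decreasing, $p_{v_1}\le p_{v_2}\le\cdots\le p_{v_n}$, breaking ties arbitrarily. Given a quadruplet $x<u<v<y$ with $xv,uy\in E$, note that $xv\in E$ forces $p_x\in I_v$ and $uy\in E$ forces $p_y\in I_u$. Since $I_u$ is an interval with $p_u,p_y\in I_u$ and $p_u\le p_v\le p_y$, convexity gives $p_v\in I_u$; symmetrically $p_x,p_v\in I_v$ with $p_x\le p_u\le p_v$ gives $p_u\in I_v$. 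Hence $\set{p_u,p_v}\subseteq I_u\cap I_v$, so $uv\in E$, which is the $4$-point condition.

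The step with real content is $(2)\Rightarrow(1)$. Here I would fix an ordering $v_1<\cdots<v_n$ satisfying the $4$-point condition, put $p_i=i$, and let each interval run from the index of its leftmost neighbour to that of its rightmost neighbour: $I_i=[\ell_i,r_i]$ with $\ell_i=\min(\set{i}\cup\Set{j<i}{v_jv_i\in E})$ and $r_i=\max(\set{i}\cup\Set{j>i}{v_iv_j\in E})$. For $i<j$ the condition $\set{i,j}\subseteq I_i\cap I_j$ collapses to ``$\ell_j\le i$ and $j\le r_i$'', and the implication $v_iv_j\in E\Rightarrow\ell_j\le i\le j\le r_i$ is immediate from the definitions. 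For the converse I would split into cases: if $\ell_j=i$ or $r_i=j$ then $v_iv_j\in E$ follows directly, since a minimum or maximum is attained in its defining set; otherwise there are $a<i$ with $v_av_j\in E$ and $b>j$ with $v_iv_b\in E$, so the $4$-point condition applied to $a<i<j<b$ yields $v_iv_j\in E$. This shows $\set{(I_i,p_i)}$ represents $G$. I expect the only genuine difficulty to be hitting on this interval assignment; once it is in place the $4$-point condition supplies exactly the ``middle'' edges.

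It then remains to tie up $(2)\Leftrightarrow(3)$ and $(3)\Leftrightarrow(4)$, which I would do by unwinding definitions. For a non-edge $uv$ with $u<v$, the disjunction in $(3)$ fails precisely when some $w'<u$ has $w'v\in E$ and some $w>v$ has $uw\in E$; forbidding this configuration is exactly the $4$-point condition applied to $w'<u<v<w$, so $(2)$ and $(3)$ are contrapositives of one another. Finally $(4)$ is the same statement read off the augmented adjacency matrix: a $0$ strictly above the diagonal at position $(i,j)$ is the non-edge $v_iv_j$, ``right open'' says $v_iv_k\notin E$ for all $k>j$, and ``up open'' says $v_kv_j\notin E$ for all $k<i$, matching the two alternatives in $(3)$.
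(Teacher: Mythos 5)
Your proposal is correct: the convexity argument for $(1)\Rightarrow(2)$, the leftmost/rightmost-neighbour interval construction with $p_i=i$ for $(2)\Rightarrow(1)$, and the contrapositive/matrix rereadings for $(2)\Leftrightarrow(3)\Leftrightarrow(4)$ all go through, including the key case split $\ell_j=i$ or $r_i=j$ versus $\ell_j<i$ and $r_i>j$ where the $4$-point condition supplies the missing edge. Note that the paper itself gives no proof of this theorem (it is quoted from Catanzaro et al.\ and the reference on central MPTG), and your argument is essentially the standard one used there, so there is nothing to reconcile.
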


\noindent If the vertices of $G$ satisfy any of the aforementioned conditions with respect to a vertex ordering, then we call the ordering as {\em MPTG ordering} of $G$. One can verify that MPTG orderings need not be unique for  $G$.

\vspace{0.2em}
\noindent From \cite{san} one can note that MPTG and max-tolerance graphs are not same. In Theorem \ref{incom} we prove that these graph classes are incomparable. To obtain the proof we need certain property of proper max-tolerance graphs as described in Corollary \ref{k23pf}. 
\begin{obs}\label{pmtg}
Let $G=(V,E)$ be a proper max-tolerance graph. Then there exist a vertex ordering ($\prec^{*}$) of $V$ such that for every quadruplet $v_{1},v_{2},v_{3},v_{4}$ the following holds: 
\begin{eqnarray*}\label{pmt}
\mbox{If} \hspace{0.5em}  v_{1}\prec^{*} v_{2}\prec^{*} v_{3}\prec^{*} v_{4}, \hspace{0.5em} v_{1}v_{3},v_{2}v_{4}\in E\Rightarrow \\ v_{2}v_{3}\in E  \hspace{0.5em} \mbox{and}  \hspace{0.5em} (v_{1}v_{2}\in E  \hspace{0.5em}\mbox{or} \hspace{0.5em} v_{3}v_{4}\in E \hspace{0.5em} \mbox{or}\hspace{0.5em} v_{1}v_{2},v_{3}v_{4}\in E). \hspace{ 1 em} (2.3)
\end{eqnarray*}
\end{obs}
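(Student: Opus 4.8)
The plan is to read off the required ordering $\prec^{*}$ directly from a proper max-tolerance representation of $G$ and then verify $(2.3)$ by an elementary endpoint computation. First I would fix a representation $\{(I_v,t_v)\mid v\in V\}$ with $I_v=[l_v,r_v]$ and $0<t_v\le|I_v|$ in which no interval is properly contained in another, and (as usual) assume after a harmless perturbation that all $2|V|$ endpoints are distinct. The one structural fact I need is that properness makes the left-endpoint order coincide with the right-endpoint order: if $l_u<l_v$ and $r_u\ge r_v$ then $[l_v,r_v]\subsetneq[l_u,r_u]$, which is forbidden. So I set $u\prec^{*}v\iff l_u<l_v$ ($\iff r_u<r_v$); this single order is then the candidate witnessing $(2.3)$ for every quadruplet.

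Next I would fix $v_1\prec^{*}v_2\prec^{*}v_3\prec^{*}v_4$, so $l_1<l_2<l_3<l_4$ and $r_1<r_2<r_3<r_4$ (writing $I_i=[l_i,r_i]$ and $t_i$ for the data of $v_i$), and assume $v_1v_3,v_2v_4\in E$. Since tolerances are positive, an edge forces an intersection of positive length, and for $i<j$ we have $I_i\cap I_j=[l_j,r_i]$; thus the hypotheses become $r_1-l_3\ge\max\{t_1,t_3\}$ and $r_2-l_4\ge\max\{t_2,t_4\}$, and in particular $l_3<r_1$ and $l_4<r_2$. For $v_2v_3\in E$: the intersection $[l_3,r_2]$ is nondegenerate (as $l_3<r_1<r_2$) and $r_2-l_3>r_1-l_3\ge t_3$ while $r_2-l_3>r_2-l_4\ge t_2$, so $r_2-l_3\ge\max\{t_2,t_3\}$, giving $v_2v_3\in E$.

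For the disjunction $v_1v_2\in E$ or $v_3v_4\in E$ I would argue by contradiction, assuming both are non-edges; one should not expect a fixed one of the two to be forced, which is why the contradiction is taken on the conjunction. Both $[l_2,r_1]$ and $[l_4,r_3]$ are nondegenerate (from $l_2<l_3<r_1$ and $l_4<r_2<r_3$), so the two non-edges give $r_1-l_2<\max\{t_1,t_2\}$ and $r_3-l_4<\max\{t_3,t_4\}$. Since $r_1-l_2>r_1-l_3\ge t_1$, the first forces $t_2>r_1-l_2$; symmetrically $r_3-l_4>r_2-l_4\ge t_4$ forces $t_3>r_3-l_4$. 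Feeding these back into the hypotheses, $r_2-l_4\ge t_2>r_1-l_2$ and $r_1-l_3\ge t_3>r_3-l_4$, i.e. $r_2-r_1>l_4-l_2$ and $l_4-l_3>r_3-r_1$. As $l_2<l_3$ gives $l_4-l_2>l_4-l_3$, chaining yields $r_2-r_1>l_4-l_2>l_4-l_3>r_3-r_1$, hence $r_2>r_3$, contradicting $r_2<r_3$. This establishes $(2.3)$.

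The only genuinely nontrivial ingredient is the opening observation that in a proper representation the left-endpoint order equals the right-endpoint order, and hence that a single linear order witnesses $(2.3)$ for all quadruplets; once that order is fixed, the max-tolerance edge relation together with the strict endpoint inequalities makes the rest pure bookkeeping. The only mild nuisance is the reduction to distinct endpoints, but that is the standard perturbation argument for representation-based graph classes and affects neither $G$ nor the properness of the representation.
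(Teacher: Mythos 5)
Your proposal is correct and follows essentially the same route as the paper: order the vertices by left endpoints (properness aligning this with the right-endpoint order), derive $v_2v_3\in E$ from the two inequalities $r_2-l_3\ge r_2-l_4\ge t_2$ and $r_2-l_3\ge r_1-l_3\ge t_3$, and refute the simultaneous absence of $v_1v_2$ and $v_3v_4$ by contradiction. The only cosmetic difference is the final step, where the paper pits the tolerances against each other ($t_3<t_2$ and $t_2<t_3$) while you chain endpoint differences to force $r_2>r_3$; both are immediate from the same inequalities.
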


\begin{proof}
Let $G$ be a proper max-tolerance graph with interval $I_{i}=[a_{i},b_{i}]$ and tolerance $t_{i}$ for each vertex $v_{i}\in V$. Then arranging the intervals according to the increasing order of their left endpoints one can show $b_{2}-a_{3}=b_{2}-a_{4}+a_{4}-a_{3}>t_{2}$ as $a_{4}>a_{3}$ and $v_{2}v_{4}\in E$. Again $b_{2}-a_{3}=b_{2}-b_{1}+b_{1}-a_{3}>t_{3}$ as $b_{2}>b_{1}$ and $v_{1}v_{3}\in E$. Therefore we get $|I_{2}\cap I_{3}|=b_{2}-a_{3}> \text{max} \{t_{2},t_{3}\}$ which imply $v_{2}v_{3}\in E$.

\noindent Now if $v_{1}v_{2}\notin E$ then $b_{1}-a_{2}<t_{1}$ or $t_{2}$. If $b_{1}-a_{2}<t_{1}$ then $t_{1}>b_{1}-a_{2}>b_{1}-a_{3}\geq t_{1}$  introduces contradiction. Hence $b_{1}-a_{2}<t_{2}$. Thus we get $t_{3}\leq b_{1}-a_{3}<b_{1}-a_{2}<t_{2}$. Again if $v_{3}v_{4}\notin E$ one can find $t_{2}<t_{3}$ which is again a contradiction. Hence the result follows.
\end{proof}
 
\begin{cor}\label{k23pf}
$K_{2,3}$ is not a proper max-tolerance graph.
\end{cor}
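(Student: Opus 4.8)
The plan is to argue by contradiction, using Observation~\ref{pmtg} in a purely combinatorial way. Suppose $K_{2,3}$ were a proper max-tolerance graph, with bipartition classes $A=\{a_{1},a_{2}\}$ and $B=\{b_{1},b_{2},b_{3}\}$; thus its edges are exactly the pairs $a_{i}b_{j}$, while $a_{1}a_{2}$ and $b_{1}b_{2}$, $b_{1}b_{3}$, $b_{2}b_{3}$ are non-edges. Fix a vertex ordering $\prec^{*}$ of the five vertices as supplied by Observation~\ref{pmtg}, and assume without loss of generality that $a_{1}\prec^{*}a_{2}$. The two vertices $a_{1},a_{2}$ split the ordering into three zones, and I would record how $B$ distributes among them: let $\ell$, $m$, $r$ be the number of $b\in B$ with $b\prec^{*}a_{1}$, with $a_{1}\prec^{*}b\prec^{*}a_{2}$, and with $a_{2}\prec^{*}b$, respectively, so that $\ell+m+r=3$.

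The next step is to rule out $m\ge 2$, $\ell\ge 2$, and $r\ge 2$, each by exhibiting a single quadruplet that violates condition~(2.3) of Observation~\ref{pmtg}. If $m\ge 2$, pick $b\prec^{*}b'$ in the middle zone: in the quadruplet $a_{1}\prec^{*}b\prec^{*}b'\prec^{*}a_{2}$ we have $a_{1}b'\in E$ and $ba_{2}\in E$, so (2.3) would force the edge $bb'$, which is impossible since $b,b'\in B$. If $\ell\ge 2$, pick $b\prec^{*}b'$ in the left zone: in the quadruplet $b\prec^{*}b'\prec^{*}a_{1}\prec^{*}a_{2}$ we have $ba_{1}\in E$ and $b'a_{2}\in E$, so (2.3) would force $bb'\in E$ or $a_{1}a_{2}\in E$ — and both of these are non-edges. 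The case $r\ge 2$ is symmetric, using the quadruplet $a_{1}\prec^{*}a_{2}\prec^{*}b\prec^{*}b'$ with $b,b'$ in the right zone. Hence $\ell,m,r\le 1$, and since they sum to $3$ we get $\ell=m=r=1$; after relabeling, the ordering is forced to be $b_{1}\prec^{*}a_{1}\prec^{*}b_{2}\prec^{*}a_{2}\prec^{*}b_{3}$. For this one remaining ordering I would then look at the quadruplet $b_{1}\prec^{*}a_{1}\prec^{*}a_{2}\prec^{*}b_{3}$: here $b_{1}a_{2}\in E$ and $a_{1}b_{3}\in E$, so (2.3) would force $a_{1}a_{2}\in E$, the final contradiction.

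Since this is a short finite case check, I do not anticipate a real obstacle; the only point requiring care is deploying the two clauses of (2.3) correctly — the clause ``$v_{2}v_{3}\in E$'' disposes of the configurations in which two vertices of $B$ are nested between $a_{1}$ and $a_{2}$, the disjunctive clause ``$v_{1}v_{2}\in E$ or $v_{3}v_{4}\in E$'' disposes of the configurations with two vertices of $B$ on a common side of $A$, and only the single balanced ordering escapes both, where a direct application of (2.3) fails as well. It is worth remarking that the ordinary $4$-point condition alone does not suffice for this argument — $K_{2,3}$ does admit an MPTG ordering, for instance $b_{1}<b_{2}<b_{3}<a_{1}<a_{2}$ — so it is precisely the extra strength of Observation~\ref{pmtg}, which stems from properness of the intervals, that is being exploited here.
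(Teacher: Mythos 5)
Your proof is correct and follows essentially the same route as the paper's: use Observation \ref{pmtg} to show no two vertices of the size-three partite class can lie in the same zone determined by the two vertices of the size-two class, so the ordering is forced to be one-per-zone, and then the quadruplet straddling both degree-three vertices contradicts (2.3) since they are nonadjacent. Your write-up merely makes the case analysis (the counts $\ell,m,r$ and the explicit quadruplets) more explicit than the paper's terser presentation.
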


\begin{proof}
On the contrary, if $K_{2,3}$ is a proper max-tolerance graph then one can find a vertex ordering ($\prec^{*}$ say) from Observation \ref{pmtg} with respect to which the vertices of $K_{2,3}$ satisfies (\ref{pmt}). Let $\{v_{1},v_{5}\},\{v_{2},v_{3},v_{4}\}$ are two partite sets of it.
Now if $v_{1}\prec^{*} v_{5}$. Then adjacency of vertices helps us to conclude that any two vertices from $\{v_{2},v_{3},v_{4}\}$ can not occur simultaneously within $v_{1}, v_{5}$ or left to $v_{1}$ or right to $v_{5}$ in $\prec^{*}$. Hence exactly one among them can occur within $v_{1}, v_{5}$ and between two other vertices one (say $v_{4}$) occurs left to $v_{1}$ and other (say $v_{3}$) occurs right to $v_{5}$ respectively. But again (\ref{pmt}) gets contradicted for vertices $\{ v_{4}, v_{1}, v_{5}, v_{3}\}$ as $v_{1},v_{5}$ are nonadjacent. Similar contradiction will arise when $v_{5}\prec^{*} v_{1}$. 
\end{proof}

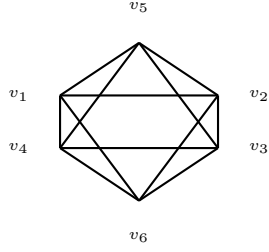
\begin{figure}[t]
\begin{center}
\begin{tikzpicture} [scale=0.7]
\draw[-][draw=black,thick] (1,0) -- (4,0);
\draw[-][draw=black,thick] (1,-1) -- (4,-1);
\draw[-][draw=black,thick] (1,0) -- (1,-1);
\draw[-][draw=black,thick] (4,0) -- (4,-1);
\draw[-][draw=black,thick] (2.5,1) -- (1,0);
\draw[-][draw=black,thick] (2.5,1) -- (4,0);
\draw[-][draw=black,thick] (2.5,-2) -- (1,-1);
\draw[-][draw=black,thick] (2.5,-2) -- (4,-1);
\draw[-][draw=black,thick] (2.5,1) -- (1,-1);
\draw[-][draw=black,thick] (2.5,1) -- (4,-1);
\draw[-][draw=black,thick] (2.5,-2) -- (1,0);
\draw[-][draw=black,thick] (2.5,-2) -- (4,0);
\node[above] at (2.5,1.4){\tiny{$v_{5}$}};
\node[below] at (2.5,-2.4){\tiny{$v_{6}$}};
\node[left] at (0.6,0) {\tiny{$v_{1}$}};
\node[right] at (4.4,0){\tiny{$v_{2}$}};
\node[left] at (0.6,-1) {\tiny{$v_{4}$}};
\node[right] at (4.4,-1) {\tiny{$v_{3}$}};
\end{tikzpicture}
\caption{$G_{2}$ in Example $2.6$ of \cite{san}}\label{g2}
\end{center}
\end{figure}

\begin{thm} \label{incom}
Max-tolerance graphs and the class of max-point-tolerance graphs are not comparable.
\end{thm}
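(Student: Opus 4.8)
To show the two classes are incomparable, I need to exhibit one graph that is a max-tolerance graph but not an MPTG, and another that is an MPTG but not a max-tolerance graph. The plan is to find a small, explicit example on each side and verify both memberships by hand, using the combinatorial characterizations already available rather than wrestling directly with interval representations.

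For the direction ``max-tolerance but not MPTG'': I would look for a graph with a simple max-tolerance representation whose structure violates the $4$-point condition of Theorem~\ref{mptg1}(\ref{4pt}) for \emph{every} vertex ordering. A natural candidate is a graph built from a long induced path or a suitably chosen $K_{2,3}$-type configuration together with extra edges that make it realizable by (non-proper) max-tolerance intervals; for instance one can take a graph containing an induced $C_4$ or a bipartite-like gadget. The cleanest route is to exhibit a concrete graph $G$, give its max-tolerance representation explicitly (intervals $I_i=[a_i,b_i]$ and tolerances $t_i$), and then argue non-membership in MPTG by showing that in any vertex ordering some quadruplet $x<u<v<y$ has $xv,uy\in E$ but $uv\notin E$ — this amounts to a short case analysis over the possible relative positions of the four relevant vertices in a putative MPTG ordering. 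Graphs with several degree-two vertices whose neighborhoods interlace tend to force such a violation.

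For the direction ``MPTG but not max-tolerance'': here I would use the $K_{2,3}$-based information distilled in Corollary~\ref{k23pf} and Observation~\ref{pmtg}, but I must be careful — Corollary~\ref{k23pf} only rules out $K_{2,3}$ as a \emph{proper} max-tolerance graph, not as a general max-tolerance graph. So instead I would take the graph $G_2$ of Figure~\ref{g2} (the $K_{2,3}$ augmented with the two extra apex vertices $v_5,v_6$, i.e.\ the graph from Example~2.6 of~\cite{san}), which is already known there not to be a max-tolerance graph, and check that $G_2$ \emph{is} an MPTG by producing an MPTG ordering: I would order the vertices as $v_5, v_1, v_2, v_3, v_4, v_6$ (or some such), write down the augmented adjacency matrix, and verify condition~(4) of Theorem~\ref{mptg1} — every $0$ above the diagonal is right-open or up-open. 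Equivalently I can directly exhibit pairs $(I_v,p_v)$ realizing $G_2$.

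The main obstacle I anticipate is the ``MPTG but not max-tolerance'' side: certifying that a given graph is \emph{not} a max-tolerance graph is the delicate part, since max-tolerance graphs have no clean forbidden-subgraph or ordering characterization. I expect the paper leans on the already-established fact from~\cite{san} that $G_2$ is not a max-tolerance graph (citing Figure~\ref{g2}), so my job reduces to the comparatively routine verification that $G_2$ satisfies the MPTG $4$-point condition under a well-chosen ordering. The other direction should be a short explicit construction plus a finite case check, so the write-up is essentially: (i) present graph $G_1$, give its max-tolerance representation, prove it fails every MPTG ordering; (ii) recall $G_2$ from~\cite{san}, note it is not max-tolerance, and exhibit an MPTG ordering for it; conclude incomparability.
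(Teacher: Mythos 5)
There is a genuine gap, and it starts with a mix-up about which side of the separation the graph $G_{2}$ certifies. According to \cite{san} (and as used in the paper), $G_{2}$ of Figure \ref{g2} \emph{is} a max-tolerance graph and is \emph{not} an MPTG; it therefore witnesses the direction ``max-tolerance but not MPTG''. Your plan assigns it the opposite role: you propose to show $G_{2}$ is an MPTG by exhibiting an MPTG ordering and to quote \cite{san} for $G_{2}$ not being max-tolerance. No such MPTG ordering exists, and \cite{san} asserts the reverse, so this half of your argument cannot be carried out. With $G_{2}$ correctly placed, your first direction (for which you only sketch a vague ``gadget'' construction, not a proof) is in fact the easy one: cite $G_{2}$ and \cite{san} and you are done.

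What is then genuinely missing from your proposal is the other direction: a concrete graph that is an MPTG but not a max-tolerance graph, together with a proof of non-membership. You correctly observe that Corollary \ref{k23pf} only excludes \emph{proper} max-tolerance representations of $K_{2,3}$, but you stop there. The paper's route is to take $K_{m,n}$ with $m,n\geq 13$: Lemma \ref{MPTG} gives an explicit MPTG representation of every $K_{m,n}$, and Lemma \ref{notMTG} shows $K_{m,n}$, $m,n\geq 13$, is not a max-tolerance graph by a containment (poset) analysis of any hypothetical representation — no two intervals of one partite set can contain an interval of the other, each partite set contains three pairwise non-nested intervals, etc. — which forces an induced $K_{2,3}$ whose five intervals are pairwise non-contained, i.e.\ a proper max-tolerance representation of $K_{2,3}$, contradicting Corollary \ref{k23pf}. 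The large values of $m,n$ are exactly what lets one force properness on the extracted $K_{2,3}$; without some argument of this kind (or another certified non-max-tolerance MPTG), your proposal does not establish incomparability.
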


\begin{proof}
From \cite{san} one can verify that the graph $G_{2}$ in Example $2.6$ (see Figure \ref{g2}) is a max-tolerance graph which is not a MPTG. Now in Lemma \ref{notMTG} we  prove $K_{m,n}$, $m,n\geq 13$ is not a max-tolerance graph. Next in Lemma \ref{MPTG} we show that $K_{m,n}$ for any two positive integers $m,n$ is a max-point-tolerance graph. Thus $K_{m,n}$ for $m,n\geq 13$ and $G_{2}$ seperate MPTG from max-tolerance graphs. Hence these graph classes become incomparable. 
\end{proof}

\noindent We do the proof verification of the aforementioned Theorem with the help of following Lemmas. 

\begin{lem}\label{notMTG}
Complete bipartite graph $K_{m,n}$ when $m,n\geq13$ is not a max-tolerance graph. 
\end{lem}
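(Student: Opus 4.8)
The plan is to rule out $K_{13,13}$ directly; this suffices since max-tolerance graphs are hereditary (restrict the representation). So suppose $K_{13,13}$ has a max-tolerance representation with interval $I_x=[l_x,r_x]$ and tolerance $0<t_x\le|I_x|$ for each vertex, and let $A,B$ be the two colour classes, each of size $13$. I would first normalise: one of the two classes, say $A$, must be represented by pairwise-intersecting intervals, because if $I_u\cap I_{u'}=\emptyset$ for some $u,u'\in A$, say $r_u<l_{u'}$, then every $w\in B$ meets both $I_u$ and $I_{u'}$, hence $I_w\supseteq[r_u,l_{u'}]$, so the intervals of $B$ all contain the point $(r_u+l_{u'})/2$ and are pairwise intersecting; by symmetry we may assume $A$ is the pairwise-intersecting class, and by Helly's theorem (in dimension one) its intervals then have a common point $p$. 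Next I would record what adjacency to all of $A$ forces on $B$: taking $u_0\in A$ with $|I_{u_0}|=\mu$ minimal, adjacency of any $w\in B$ to $u_0$ gives $t_w\le|I_w\cap I_{u_0}|\le\mu$, so every tolerance in $B$ is at most $\mu$ and hence non-adjacent $w,w'\in B$ must satisfy $|I_w\cap I_{w'}|<\mu$; moreover adjacency of $w$ to an arbitrary $u\in A$ gives $|I_w\cap I_u|\ge t_u$, i.e. $l_w\le r_u-t_u$ and $r_w\ge l_u+t_u$, so with $L=\min_{u\in A}(r_u-t_u)$ and $R=\max_{u\in A}(l_u+t_u)$ every $w\in B$ has $l_w\le L$ and $r_w\ge R$.

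This sets up a dichotomy. If $L<R$, then every $I_w$ ($w\in B$) contains the fixed interval $[L,R]$, so any two of them overlap in at least $R-L$, and to avoid contradicting the previous paragraph we would need $R-L<\mu$. If instead $L\ge R$, then $r_u-t_u\ge l_u+t_u$ for each $u\in A$, i.e. $t_u\le|I_u|/2$ for every $u\in A$; writing $I_u=[p-x_u,\,p+y_u]$ with $x_u,y_u\ge0$, the non-adjacencies $\min(x_u,x_{u'})+\min(y_u,y_{u'})=|I_u\cap I_{u'}|<\max(t_u,t_{u'})\le\max(|I_u|,|I_{u'}|)/2$ force the left over-hangs $x_u$ of distinct intervals to differ by a definite amount (in the equal-length case one gets $x_{u'}>x_u+\ell/2$ along an ordering by $x$), so that $13$ intervals cannot coexist — contradiction.

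It remains to finish the case $L<R$ by showing $R-L\ge\mu$. Here I would run a Dilworth/pigeonhole argument on the containment order of $A$: since $13>9$, $A$ contains either a nested chain $I_{u_1}\supsetneq\cdots\supsetneq I_{u_4}$ (whose non-edges force $t_{u_1}>|I_{u_j}|$ for $j\ge2$) or a long crossing antichain. In either case the extreme members — the interval of the sub-family reaching furthest left and the one reaching furthest right, or the top of the chain — have $l_u+t_u$ large and $r_u-t_u$ small simultaneously, so that $R-L$ is at least the common overlap of the sub-family, and once the sub-family is long enough (which is exactly where the numeric bookkeeping, hence the value $13$, enters) this is at least $\mu$, giving the contradiction. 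I do not expect $13$ to be optimal; it is just a safe threshold coming from adding up the losses in the two cases.

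The step I expect to be the real obstacle is precisely this quantitative part of the last two paragraphs: converting "the intervals must differ by a definite amount" and "the overlap of a long sub-family is large" into a concrete finite bound, and in particular verifying that in the crossing case the intervals of $B$ are forced to share one long \emph{common} sub-interval (rather than merely pairwise-overlapping pieces, which would not be enough). A conceptually cleaner but equally calculation-heavy alternative is to pass to the semi-square model of Kaufmann et al.: represent each vertex $v$ by the right triangle $T_v=\{(p,q):l_v\le p,\ q\le r_v,\ q-p\ge t_v\}$ (all homothetic to a single fixed triangle), with $uv\in E$ iff $T_u\cap T_v\ne\emptyset$, and argue geometrically that two large pairwise-disjoint families of such triangles cannot be mutually crossing.
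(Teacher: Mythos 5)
Your outline is not a proof: both branches of your dichotomy stop exactly at the points that carry all the difficulty, and one of them is false as stated. In the branch $L\ge R$ you claim that the non-adjacencies inside the Helly class $A$ alone (all intervals through a common point $p$, each with $t_u\le |I_u|/2$) already forbid $13$ intervals, but your parenthetical argument only treats the equal-length case, and in general the claim is wrong: take $I_k=[p-4^k,\,p+4^k]$ with $t_k=4^k$ for $k=1,\dots,13$; these all contain $p$, satisfy $t_k=|I_k|/2$, and are pairwise non-adjacent since $|I_j\cap I_k|=2\cdot 4^j<4^k$ for $j<k$. So that branch cannot be closed without re-engaging the $B$-side constraints, which your sketch never does. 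In the branch $L<R$ you yourself flag that the needed inequality $R-L\ge\mu$ is "the real obstacle"; it is not automatic (from the minimal interval $u_0$ one only gets $R-L\ge 2t_{u_0}-\mu$), and the Dilworth/pigeonhole step that is supposed to supply it — including where the threshold $13$ actually enters — is left entirely unspecified. As it stands the proposal is a plausible plan with the quantitative core missing, not a verification of the lemma.

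For comparison, the paper takes a different and more combinatorial route: it first shows (Observation \ref{pmtg} and Corollary \ref{k23pf}) that $K_{2,3}$ is not a \emph{proper} max-tolerance graph, then proves two containment claims (no two intervals of one side can contain an interval of the other side; each side of size $\ge 7$ contains three pairwise containment-incomparable intervals) and uses a counting/pigeonhole argument on $m,n\ge 13$ to extract an induced $K_{2,3}$ whose five intervals are pairwise incomparable, i.e.\ a proper max-tolerance representation of $K_{2,3}$ — contradiction. If you want to salvage your approach, the cleanest fix is probably to abandon the metric dichotomy and aim, as the paper does, at exhibiting a containment-free induced $K_{2,3}$, since that forbidden-subconfiguration step replaces all of the quantitative bookkeeping you were postponing.
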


\begin{proof}
We assume on contrary $K_{m,n}$ for $m,n\geq 13$ is a max-tolerance graph with interval representation $\{I_{v_{i}}=[a_{v_{i}},b_{v_{i}}]|v_{i}\in V\}$ and tolerance $t_{v_{i}}$ for each vertex $v_{i}\in V$ where the vertex set $V$ is partitioned into two partite sets $V_{1}=\{x_{1},\hdots, x_{m}\}$, $V_{2}=\{y_{1},\hdots, y_{n}\}$ and $E$ be the edge set of $K_{m,n}$.
\vspace{0.3em}

\noindent \textbf{Claim $1$} \hspace*{0.1em} \textbf{No two intervals from same partite set can contain\footnote{$I_{x}$ contains $I_{y}$ means $I_{x}\supseteq I_{y}$} an interval from other partite set.}

\noindent We assume on contrary $I_{x_{1}}, I_{x_{2}}$ contain $I_{y_{1}}$. As $x_{1}y_{1}, x_{2}y_{1}\in E$, $|I_{y_{1}}|=|I_{x_{1}}\cap I_{y_{1}}|=|I_{x_{2}}\cap I_{y_{1}}|\geq t_{x_{1}}, t_{x_{2}}$ from definition. Now if $|I_{x_{1}}\cap I_{x_{2}}|<t_{x_{1}}$ then $t_{x_{1}}\leq |I_{y_{1}}|\leq |I_{x_{1}}\cap I_{x_{2}}|<t_{x_{1}}$ as $I_{x_{1}}, I_{x_{2}}\supseteq I_{y_{1}}$. Similar contradiction will arise if $|I_{x_{1}}\cap I_{x_{2}}|<t_{x_{2}}$.

\vspace{0.3em}
\noindent \textbf{Claim $2$} \hspace*{0.1em}  \textbf{There always exist three vertices from each partite set for which their corresponding intervals are not contained in each other.} (we only need $m,n\geq7$)

\noindent Note that $V_{1}, V_{2}$ are posets with respect to the interval containment relation. Hence if our claim is not true, as $m,n\geq 7$  each poset  must contain a totally ordered subset of cardinality at least 3. Let the totally ordered subset of $V_{1}$ be $\{I_{x_{1}}, I_{x_{2}}, I_{x_{3}}\}$ satisfying $I_{x_{3}}\subseteq I_{x_{2}}\subseteq I_{x_{1}}.$ 
  
\noindent We will show there can not be two intervals from $V_{2}$ which intersect $I_{x_{1}}$ in its same end (left or right). On contrary let $I_{y_{1}}$ and $I_{y_{2}}$ intersect $I_{x_{1}}$ in its left end and $b_{y_{1}}\leq b_{y_{2}}$. Now as $I_{x_{2}}$ contained in $I_{x_{1}}$, $|I_{y_{1}}\cap I_{y_{2}}|\geq |I_{y_{1}}\cap I_{x_{2}}|\geq t_{y_{1}}$. Again $t_{y_{2}}\leq |I_{y_{2}}\cap I_{x_{2}}|\leq |I_{x_{2}}|=|I_{x_{1}}\cap I_{x_{2}}|<t_{x_{1}}$\footnote{If $x,y$ are nonadjacent and $I_{y}\subseteq I_{x}$ then $|I_{x}\cap I_{y}|<t_{x}$}$\leq |I_{y_{1}}\cap I_{x_{1}}|\leq |I_{y_{1}}\cap I_{y_{2}}|$. Hence it follows that $|I_{y_{1}}\cap I_{y_{2}}|\geq t_{y_{1}}, t_{y_{2}}$ which is a contradiction as $y_{1},y_{2}$ are nonadjacent. 

\noindent Similarly one can show there can be at most one interval from $V_{2}$ which intersect $I_{x_{2}}$ in its same end. Now as $n\geq7$ there must exist atleast $n-2$ intervals corresponding to vertices of $V_{2}$ properly contained in $I_{x_{2}}$ which is not possible due to Claim $1$.

\vspace{0.3em}

\noindent  As $|V_{1}|\geq13(\geq7)$ we can always choose three vertices $A=\{x_{1},x_{2},x_{3}\}$ from $V_{1}$  satisfying the conditions of Claim $2$. Similarly we choose vertices $B=\{x_{4},x_{5},x_{6}\}$ from
$V_{1}\setminus A$ and $C=\{x_{7},x_{8},x_{9}\}$ from $(V_{1}\setminus A)\setminus B$ by observing $|V_{1}\setminus A|\geq 10$ and $|(V_{1}\setminus A)\setminus B|\geq 7$ respectively. Now as each $I_{x_{i}}$ for $1\leq i \leq9$ can be contained in atmost one $I_{y_{j}}$ from Claim $1$, we can discard atmost nine such $y_{j}$'s from $V_{2}$ where $1\leq j\leq n$. Again as $n\geq 13$ if we discard these $y_{j}$'s from $V_{2}$ we can again able to choose two vertices $y_{10},y_{11}$ (say) for which $I_{y_{10}}\not \subset I_{y_{11}}$ and $I_{y_{11}}\not \subset I_{y_{10}}$. If not then there will be atleast four $y_{j}$'s remaining in $V_{2}$  forming a totally ordered subset. As $m\geq 13$ it follows from the proof of claim $2$ that the above statement is redundant.
Clearly $I_{x_{i}}\not\subset I_{y_{10}}, I_{y_{11}}$ for $1\leq i\leq 9$. Again as $I_{y_{10}}$ and $I_{y_{11}}$ can contained in atmost one interval of $V_{1}$ from Claim $1$, between $A, B, C$ we can always get one set (say $C$) for which $I_{y_{10}}, I_{y_{11}}$ is not contained in any of the intervals of that set. Now it is easy to see $\{y_{10},y_{11}\}\cup C$ form $K_{2,3}$ for which no interval is contained in other. Therefore they form an induced proper max-tolerance graph. But it can not be true from Corollary \ref{k23pf}. 
\end{proof}

\begin{lem}\label{MPTG}
$K_{m,n}$ is max-point-tolerance graph for all positive integers $m,n$.
\end{lem}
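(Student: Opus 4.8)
The plan is to exhibit an explicit max-point-tolerance representation of $K_{m,n}$ that works uniformly for all positive integers $m,n$. The key idea is that in a max-point-tolerance representation, an edge $uv$ fails precisely when $p_u\notin I_v$ or $p_v\notin I_u$; so to make all vertices of one partite set mutually non-adjacent while adjacent to every vertex of the other partite set, I will arrange the points and intervals so that each intra-partite pair is ``separated'' by a distinguished point lying outside one interval, whereas every inter-partite pair has both distinguished points sitting in the common intersection.

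Concretely, let the partite sets be $V_1=\{x_1,\dots,x_m\}$ and $V_2=\{y_1,\dots,y_n\}$. I would place the intervals for $V_1$ nested-and-shifted on the left and those for $V_2$ nested-and-shifted on the right, with a large common overlap region in the middle. For instance, assign to $x_i$ the interval $I_{x_i}=[-i,\,1]$ with point $p_{x_i}=0$, and to $y_j$ the interval $I_{y_j}=[0,\,1+j]$ with point $p_{y_j}=1$ (indices $i,j\ge 1$). Then for any $x_i,x_j$ with $i<j$ we have $p_{x_i}=p_{x_j}=0\in I_{x_i}\cap I_{x_j}$, so the points do \emph{not} separate them --- this particular choice is wrong, so instead I would use distinct points: set $p_{x_i}=-i+\tfrac12$ and $p_{y_j}=1+j-\tfrac12$, keeping $I_{x_i}=[-i,1]$, $I_{y_j}=[0,1+j]$. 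Now for $i<j$, $p_{x_i}=-i+\tfrac12 > -j = $ left end of $I_{x_j}$? No: $-i+\tfrac12$ could still lie in $I_{x_j}$. The clean fix is to make the intervals within a partite set pairwise \emph{incomparable} and staggered: take $I_{x_i}=[\,i-1,\ m+i\,]$ with $p_{x_i}$ chosen in $[i-1,\,i]$, and $I_{y_j}=[\,m+j-1,\ 2m+j\,]$ with $p_{y_j}$ in $[2m+j-1,\,2m+j]$, or something of this shape; then consecutive left endpoints differ, no $p_{x_i}$ lies in $I_{x_j}$ for $i\neq j$ since the points live in disjoint unit slots to the far left of most intervals, yet every $I_{x_i}$ meets every $I_{y_j}$ in a fat interval containing both relevant points. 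I would verify the two things that must hold: (a) $\{p_{x_i},p_{x_j}\}\not\subseteq I_{x_i}\cap I_{x_j}$ for $i\neq j$ and likewise inside $V_2$, and (b) $\{p_{x_i},p_{y_j}\}\subseteq I_{x_i}\cap I_{y_j}$ for all $i,j$.

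The main obstacle is bookkeeping: choosing the numerical offsets so that (a) and (b) hold \emph{simultaneously} for arbitrary $m$ and $n$, with no hidden containment that would accidentally create an intra-partite edge or kill an inter-partite one. The cleanest way to dodge this is to pick the points for $V_1$ to be a strictly increasing sequence $q_1<q_2<\dots<q_m$ all lying in a small window $W_1$, let $I_{x_i}=[q_i,\,R]$ for a common large right endpoint $R$ --- then $p_{x_j}=q_j<q_i$ lies outside $I_{x_i}$ whenever $j<i$, so no two $x$'s are adjacent --- and symmetrically put the $V_2$ points in a window $W_2$ far to the right with $I_{y_j}=[L,\,r_j]$ for a common small left endpoint $L$; finally place $W_1\subseteq [L,R]$ and $W_2\subseteq [L,R]$ and arrange $L < \min W_1$ and $\max W_2 < R$ so that every $x$-point and every $y$-point lands in every cross-pair intersection $I_{x_i}\cap I_{y_j}=[\max(q_i,L),\ \min(R,r_j)]$. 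This makes verification of (a) and (b) a one-line check each.

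I would then conclude with the brief remark that this representation involves no nesting constraint and in fact, for $m,n\le$ small values one could also make it proper, but for the lemma as stated only the existence of \emph{some} MPTG representation is needed, which the above construction supplies; combined with Lemma~\ref{notMTG} this completes the proof of Theorem~\ref{incom}.
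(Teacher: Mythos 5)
Your final construction (one side's points at the left endpoints of intervals sharing a far-right end, the other side's points at the right endpoints of intervals sharing a far-left end, with a large common overlap) is correct and is essentially the paper's own representation, which likewise puts $p_{x_i}$ at the left endpoint of $I_{x_i}$ and $p_{y_j}$ at the right endpoint of $I_{y_j}$. The only difference is that the paper staggers the shared endpoints by small $\epsilon$-shifts so the representation is also proper (reused for Proposition \ref{bipt}), whereas your common-endpoint version yields nested intervals --- harmless here, since the lemma only asks for some MPTG representation.
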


\begin{proof}
Let $V_{1}=\{x_{1},\hdots,x_{m}\}, V_{2}=\{y_{1},\hdots,y_{n}\}$ be two partite sets of $K_{m,n}$. We assign intervals $I_{x_{i}}=[i,m+n+\dfrac{i\epsilon}{m}]$ and points $p_{i}=i$ for $1\leq i\leq m$ for  vertices of $V_{1}$ and $I_{y_{j}}=[\dfrac{j\epsilon}{n}, j+m]$ and points $p_{j}=j
+m$ for $1\leq j\leq n$ for vertices of $V_{2}$ where $0<\epsilon<1$. It is easy to check that the above assignment actually gives max-point-tolerance representation of $K_{m,n}$. 
\end{proof}

\section{Proper max-point-tolerance graphs (proper MPTG)}\label{pmpt}

\begin{prop}\label{distinct}
If a graph $G$ has a proper MPTG representation $S$, then $G$ has a proper MPTG representation $T$ in which the set of all points associated to each interval are distinct.
\end{prop}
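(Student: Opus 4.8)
The plan is to start from an arbitrary proper MPTG representation $S = \{(I_v, p_v) \mid v \in V\}$ and perturb the distinguished points so that the multiset of points lying in any single interval becomes a set, all while preserving both the adjacency relation and the ``no proper containment'' condition. First I would enumerate the (finitely many) distinct point-values $q_1 < q_2 < \cdots < q_k$ that occur among the $p_v$, and for each value $q_i$ let $W_i = \{v \in V \mid p_v = q_i\}$ be the vertices sharing that point. The idea is to replace the common value $q_i$ by tiny pairwise-distinct perturbations $q_i + \delta_v$ for $v \in W_i$, where the $\delta_v$ are chosen within a small window $(-\eta, \eta)$ around $q_i$ small enough that the window contains no other endpoint or point of the representation. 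To keep each perturbed point inside its own interval, note that if $v \in W_i$ then $q_i \in I_v$; if $q_i$ is strictly interior to $I_v$ any sufficiently small $\delta_v$ works, and if $q_i$ is an endpoint of $I_v$ we choose the sign of $\delta_v$ so that $q_i + \delta_v$ moves into the interval (and simultaneously nudge that endpoint of $I_v$ outward by a slightly larger amount, or simply shrink $\eta$ — more on this below).

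The key point to verify is that adjacency is unchanged. Recall $uv \in E$ iff $\{p_u, p_v\} \subseteq I_u \cap I_v$. If we only move points by amounts smaller than the gap to the nearest relevant endpoint, then for any pair $u,v$ the containment relations ``$p_u \in I_v$'' and ``$p_v \in I_u$'' are preserved, unless $p_u$ was exactly equal to an endpoint of $I_v$ (or symmetrically). The delicate case is therefore when $p_u = p_v = q_i$, i.e. $u, v \in W_i$, and $q_i$ is an endpoint of one or both intervals. I would handle this by first, in a preliminary step, enlarging every interval very slightly at any endpoint that coincides with some $p_w$: replace $I_v = [a_v, b_v]$ by $[a_v - \epsilon_v, b_v + \epsilon_v']$ with $\epsilon_v, \epsilon_v'$ chosen tiny and generic. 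This makes every point strictly interior to every interval that contains it, does not destroy any adjacency (no pair of intervals that were disjoint become overlapping if $\epsilon$'s are small enough, and points that were in an interval stay in it), and does not create any proper containment because the enlargements can be chosen so that no endpoint-coincidence arises and the cyclic order of endpoints is undisturbed. After this step, all points are strictly interior, so arbitrarily small perturbations of the points are safe, and the perturbation $q_i \mapsto q_i + \delta_v$ described above changes no adjacency.

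Finally I would check the proper-representation condition is maintained: enlarging intervals by distinct generic amounts and moving points (not endpoints) never introduces a proper containment $I_u \subsetneq I_v$, since containment of intervals depends only on the endpoints, and the preliminary enlargement step is where one must be slightly careful — choosing the $\epsilon_v, \epsilon_v'$ so small that the relative order of all $4|V|$-or-so endpoint values is unchanged guarantees that if $I_v$ was not properly contained in $I_u$ before, it is not after. The main obstacle is precisely this bookkeeping: organizing the perturbation so that the three properties (points in their own intervals, adjacency preserved, no proper containment) hold simultaneously; once the intervals are first fattened to make all points interior, the point-separation perturbation is routine. I would present it by choosing a single global $\eta > 0$ smaller than every pairwise distance among the finite set of all endpoints and all points in $S$, performing the enlargement with parameters in $(0, \eta/4)$, and then the point perturbations with parameters in $(-\eta/4, \eta/4)$ chosen distinct within each $W_i$; a short argument then confirms the representation $T$ so obtained is a proper MPTG representation of $G$ with all points on a given interval distinct.
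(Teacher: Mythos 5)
Your two-phase perturbation --- first fattening each interval whose endpoint coincides with a distinguished point so that every point becomes strictly interior, then spreading tied points by tiny distinct offsets chosen to preserve the relative order of all endpoints and points --- is correct and is essentially the paper's own argument, which likewise spreads the tied points over a small window $(p_i,p_i+\epsilon)$ and pushes the single coinciding right endpoint to $p_i+\epsilon$ while leaving the coinciding left endpoint at $p_i$. The only caveat (shared with the paper, which simply assumes the endpoints may be taken distinct) is that your ``generic'' fattening must be coupled for two identical intervals, since enlarging both ends of each independently can create a proper containment; with that tie-breaking noted, the bookkeeping is the same.
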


\begin{proof}
Let $G=(V,E)$ be a proper MPTG and $S=\{(I_{v},p_{v})| v\in V\}$ be a proper MPTG representation of $G$ where $I_{v}=[a_{v},b_{v}]$  
be an interval and $p_{v}$ is a point within it. We arrange the intervals according to increasing order of $p_{v}$'s, $p_{1}$ through $p_{n}$.
We iterate the following rule at each shared point moving from left to right until all $p_{v}$-values become distinct. Suppose $p_{i}$ is the smallest $p_{v}$-value shared by multiple points in $S$. Then $p_{i}=p_{i+1}=\hdots=p_{m}$. It is easy to see there can be atmost two intervals having their endpoints as $p_{i}$ as the intervals are proper we can consider the interval endpoints to be distinct. Now if they are two in number then one of them must be left endpoint (say $a_{k_{1}}$) and another must be right end point (say $b_{k_{2}}$). Take a positive value of $\epsilon$ smaller than $\text{min}\{d(p_{m},p_{m+1}), d(p_{m},a_{k}), d(p_{m},b_{k^{'}})\}$ where $p_{m+1}$, $a_{k}$, $b_{k^{'}}$ be the associated point, left endpoint and right endpoint respectively occurred just after $p_{m}$ along the real line. To obtain a new proper MPTG representation $T$ of $G$ we assign $a_{k_{1}}=p_{i}$,
$p_{l}=p_{i}+\dfrac{\epsilon}{m-l+2}$ for all $i\leq l\leq m$, $b_{k_{2}}=p_{i}+\epsilon$. It is easy to check now that the adjacencies remain intact and the intervals remain proper in $T$. 
\end{proof}

\begin{prop}\label{complete}
Complete graphs $K_{n}$ for any positive integer $n$ are proper MPTG.
\end{prop}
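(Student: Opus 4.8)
The plan is to produce an explicit proper MPTG representation of $K_{n}$ by hand. Since every pair of vertices of $K_{n}$ must be adjacent, it is enough to choose intervals having a common point and to place every distinguished point inside that common region, while keeping the intervals pairwise incomparable under inclusion so that the representation is proper.

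Concretely, I would label the vertices $v_{1},\dots,v_{n}$ and set $I_{v_{i}}=[\,i,\ n+i\,]$ and $p_{i}=n+\tfrac{i}{n+1}$ for $1\le i\le n$. Each $p_{i}$ lies in $[n,n+1]$, and $\bigcap_{i=1}^{n}[\,i,n+i\,]=[n,n+1]$, so $[n,n+1]\subseteq I_{v_{i}}\cap I_{v_{j}}$ for all $i,j$; hence $\{p_{i},p_{j}\}\subseteq I_{v_{i}}\cap I_{v_{j}}$ for every pair $i\neq j$, and the represented graph is $K_{n}$. Moreover, for $i<j$ we have both $i<j$ and $n+i<n+j$, so neither of $I_{v_{i}},I_{v_{j}}$ contains the other; thus no interval is properly contained in another and the representation is proper. (Alternatively one could take all $I_{v_{i}}$ equal to a fixed interval $[0,n+1]$ with $p_{i}=i$: equal intervals do not violate properness, and the points can moreover be kept distinct as in Proposition~\ref{distinct}.)

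There is essentially no obstacle here; the only points to check are that the chosen intervals have a nonempty common intersection containing all the $p_{i}$ and that they are pairwise incomparable, both immediate from the staggered choice above. This exhibits $K_{n}$ as a proper MPTG for every positive integer $n$, the case $n=1$ being trivial.
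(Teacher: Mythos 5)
Your proposal is correct and follows essentially the same route as the paper: an explicit representation by staggered intervals (left and right endpoints both increasing, so no containment) with all distinguished points placed in the common intersection, which makes every pair adjacent. The paper's choice of endpoints and points differs only in the specific numbers used.
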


\begin{proof} 
We give proper MPTG representation of $K_{n}$ by assigning intervals $I_{v_{k}}=[a_{k},b_{k}]$ and points $p_{k}$ for each vertex $v_{k}\in V$ where $1\leq k\leq n$. $I_{v_{1}}=[1+\dfrac{1}{n},n+1]$, $p_{1}=2$, $I_{v_{2}}=[1+\dfrac{2}{n},n+2]$, $p_{2}=2.5$ and $I_{v_{i}}=[1+\dfrac{i}{n},n+i]$, $p_{i}=i$ for $3\leq i\leq n$.
\end{proof}

\begin{prop}\label{bipt}
Complete bipartite graphs $K_{m,n}$, for any positive integer $m$ and $n$ are proper MPTG.
\end{prop}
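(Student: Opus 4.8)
The plan is to recycle the explicit max-point-tolerance representation of $K_{m,n}$ already constructed in the proof of Lemma \ref{MPTG} and merely verify that it is proper, i.e.\ that no interval of that representation is properly contained in another. Recall that, for a fixed $\epsilon$ with $0<\epsilon<1$, this representation assigns the pairs $I_{x_i}=[\,i,\ m+n+\frac{i\epsilon}{m}\,]$, $p_{x_i}=i$ to the vertices $x_1,\dots,x_m$ of the first partite set and $I_{y_j}=[\,\frac{j\epsilon}{n},\ j+m\,]$, $p_{y_j}=j+m$ to the vertices $y_1,\dots,y_n$ of the second one; Lemma \ref{MPTG} already certifies that these data realise $K_{m,n}$ as an MPTG, so only the containment condition needs attention.

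It then remains to inspect three cases for a pair of distinct intervals. If both intervals come from the first partite set, say $I_{x_i}$ and $I_{x_{i'}}$ with $i<i'$, then the left endpoint of $I_{x_{i'}}$ is the larger of the two \emph{and} its right endpoint $m+n+\frac{i'\epsilon}{m}$ is also the larger; a containment either way would force $i'\le i$, a contradiction, so neither interval contains the other. The same ``staircase'' observation applies verbatim when both intervals come from the second partite set. If one interval is $I_{x_i}$ and the other is $I_{y_j}$, note that the left endpoint of $I_{y_j}$ is $\frac{j\epsilon}{n}\le\epsilon<1\le i$, strictly smaller than the left endpoint of $I_{x_i}$, while the right endpoint of $I_{y_j}$ is $j+m\le m+n<m+n+\frac{i\epsilon}{m}$, strictly smaller than the right endpoint of $I_{x_i}$; hence $I_{y_j}\subseteq I_{x_i}$ would contradict $i\le\frac{j\epsilon}{n}$ and $I_{x_i}\subseteq I_{y_j}$ would contradict $m+n+\frac{i\epsilon}{m}\le j+m$. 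In every case no interval is properly contained in another, so the representation is a proper MPTG representation of $K_{m,n}$.

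Since the argument rests only on the already-established Lemma \ref{MPTG} together with an elementary comparison of endpoints, I do not anticipate a genuine obstacle; the only delicate point is the mixed case, where one must invoke both the normalisation $0<\epsilon<1$ and the ranges $1\le i\le m$, $1\le j\le n$ to see that the two families of intervals interleave rather than nest. (Alternatively, one could imitate the explicit construction used for $K_n$ in Proposition \ref{complete}, but recycling the representation of Lemma \ref{MPTG} is shorter and spares us from re-checking the adjacency conditions.)
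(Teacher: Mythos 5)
Your proposal is correct and follows the same route as the paper, whose proof simply states that the representation from Lemma \ref{MPTG} works; you additionally spell out the endpoint comparisons showing that representation is proper, which the paper leaves implicit. No issues.
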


\begin{proof}
Follows from the same MPTG representation of $K_{m,n}$ given in Lemma \ref{MPTG}.
\end{proof}

\noindent A graph $G$ is a {\em caterpillar} if it is a tree with path $s_{1},s_{2},\hdots,s_{k}$, called the spine of $G$ such that every vertex of $G$ has atmost distance one from the spine.

\begin{prop}
Caterpillars are proper MPTG.
\end{prop}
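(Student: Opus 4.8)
Every caterpillar is an interval graph (indeed caterpillars are exactly the trees that are interval graphs), but in general not a \emph{proper} interval graph, so the point-tolerance mechanism is exactly what will let us turn an interval-like layout into a \emph{proper} one. Concretely, the plan is to write down an explicit proper MPTG representation, in the same spirit as the preceding propositions. Fix a spine $s_{1},s_{2},\ldots,s_{k}$ of the caterpillar $G$, and for each $i$ let $\ell_{i,1},\ldots,\ell_{i,d_{i}}$ be the leaves adjacent to $s_{i}$; thus every vertex of $G$ is some $s_{i}$ or some $\ell_{i,t}$.

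I would first represent the spine alone: put $I_{s_{i}}=[\,i,\,i+2\,]$ and $p_{s_{i}}=i+1$. These intervals all have length $2$ with strictly increasing endpoints, so none is contained in another. Since $I_{s_{i-1}}\cap I_{s_{i}}\cap I_{s_{i+1}}=\{i+1\}$ and $p_{s_{i}}=i+1$ lies in all three intervals while $p_{s_{i\pm 1}}\in I_{s_{i}}$, consecutive spine vertices are adjacent; and for $|i-j|\ge 2$ one has $p_{s_{i}}=i+1\notin[\,j,\,j+2\,]=I_{s_{j}}$, so $s_{i}s_{j}\notin E$. Hence the spine is laid out correctly.

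Now I would attach the leaves. The guiding idea is that the consecutive spine points $p_{s_{i}}=i+1$ and $p_{s_{i+1}}=i+2$ enclose a ``gap'' $(i+1,i+2)$ containing no distinguished point, and each leaf of $s_{i}$ will receive a distinguished point inside this gap together with an interval that protrudes slightly to the left of $I_{s_{i}}$. Fix a small $\delta>0$ with $\delta\cdot\max_{i}d_{i}<\tfrac1{10}$ (generic, so that all points below are distinct), and for $1\le t\le d_{i}$ set
\[
I_{\ell_{i,t}}=\Bigl[\,i-\tfrac1{10}+(t-1)\delta,\ \ i+\tfrac32+(t-1)\delta\,\Bigr],\qquad
p_{\ell_{i,t}}=i+\tfrac32+(t-1)\delta,
\]
so that $p_{\ell_{i,t}}$ is the right endpoint of its interval. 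The verification then has four parts. (a) $\ell_{i,t}s_{i}\in E$, since $p_{\ell_{i,t}}\in(i+1,i+2)\subseteq I_{s_{i}}$ and $p_{s_{i}}=i+1\in I_{\ell_{i,t}}$. (b) $\ell_{i,t}$ is adjacent to no other spine vertex $s_{j}$: if $j\le i-1$ then $p_{\ell_{i,t}}>i+1\ge j+2$, hence $p_{\ell_{i,t}}\notin I_{s_{j}}$; if $j\ge i+1$ then $p_{s_{j}}=j+1\ge i+2>p_{\ell_{i,t}}$, hence $p_{s_{j}}\notin I_{\ell_{i,t}}$. (c) Any two leaves are non-adjacent: their distinguished points are the (pairwise distinct) right endpoints of their intervals, so the leaf with the larger point has that point strictly outside the other leaf's interval. (d) No proper containment is created: all leaf intervals have the common length $\tfrac{16}{10}<2$, so none contains a spine interval or another leaf interval; and each leaf interval of $s_{i}$ has left endpoint $<i$ and right endpoint $>i+1$, hence is contained in no $I_{s_{j}}=[\,j,\,j+2\,]$, as that would force an integer $j$ strictly between $i-1$ and $i$. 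Taken together, (a)--(d) say that $\{(I_{v},p_{v})\mid v\in V(G)\}$ is a proper MPTG representation of $G$.

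The one genuinely delicate point — where a naive attempt breaks down — is the tension among the requirements on a leaf $\ell$ of $s_{i}$: adjacency to $s_{i}$ forces $I_{\ell}$ to cover the middle of $I_{s_{i}}$ (so as to contain $p_{s_{i}}$), which is exactly where $I_{s_{i-1}}$ and $I_{s_{i+1}}$ also sit, yet $\ell$ must miss both $s_{i-1}$ and $s_{i+1}$ while $I_{\ell}$ must not be swallowed by $I_{s_{i}}$. Putting $p_{\ell}$ at the right endpoint of $I_{\ell}$, inside the gap $(p_{s_{i}},p_{s_{i+1}})$, severs both unwanted edges at once ($p_{\ell}$ is too far right to lie in $I_{s_{i-1}}$, and $p_{s_{i+1}}$ is too far right to lie in $I_{\ell}$), while letting $I_{\ell}$ reach just past the left end of $I_{s_{i}}$ rules out containment, and the tiny shifts by multiples of $\delta$ then spread out the leaves at a common spine vertex without disturbing any of this. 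Everything else is routine checking.
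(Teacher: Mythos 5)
Your construction is correct: all the adjacency and non-containment checks (a)--(d) go through, and this is essentially the paper's own approach — an explicit representation with equal-length spine intervals shifted along the line and leaf intervals protruding slightly past the left end of the parent spine interval, with leaf points placed in the gap between consecutive spine points. The only differences are in the choice of constants (the paper puts each leaf's point just inside, rather than at, the right endpoint of its interval), which do not change the argument.
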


\begin{proof}
Let $G=(V,E)$ be a caterpillar with spine vertices $s_{1},s_{2},\hdots,s_{k}$ and additional vertices $a_{1},a_{2},\hdots,a_{l}$. Let $a_{k_{i-1}+1},\hdots,a_{k_{i}}$ be the vertices adjacent to spine vertex $s_{i}$ where $1\leq i\leq k$, $k_{0}=0$ and $1\leq k_{i}\leq l$. We set intervals $I_{s_{i}}=[2i-3,2i+2]$ and points $p_{s_{i}}=2i$ for spine vertices where $1\leq i\leq k$ and intervals $I_{a_{j}}=[2i-3-\dfrac{n-j+1}{n+1}, 2i+\dfrac{2j}{2n+1}]$, points $p_{a_{j}}=2i+\dfrac{2j-1}{2n+1}$ for additional vertices adjacent to spine vertex $s_{i}$ where $n=k_{i}-k_{i-1}$= number of vertices adjacent to $s_{i}$ and $1\leq j\leq n$.
\end{proof}

\noindent Let $N_{G}(x)$ $(N_{G}^{'}(x))$ be the set of all vertices which are adjacent (nonadjacent) to a vertex $x$ of a graph $G$. They are called {\em neighbours} ({\em non neighbours}) of $x$ in $G$.  The subgraph induced by neighbours of $x$ in $G$ is called {\em neighbourhood} of $x$ in $G$. An independent set of three vertices where each pair is joined by a path that avoids the neighborhood of the third is called an {\em asteroidal triple}. A graph is said to be {\em asteroidal triple free} (AT-free) if it does not contain any asteroidal triple. For vertices $u,v$ of a graph $G$, let $D(u,v)$ denote the set of vertices that intercepts (i.e; either the vertex is in the path or adjacent to some vertex of the path) all $u,v$ paths. For vertices $u,v,x$ of $G$, $u$ and $v$ are said to be {\em unrelated} to $x$ if $u\notin D(v,x)$ and $v\notin D(u,x)$ \cite {Corneil}. 

\begin{thm}\label{atfree}\cite{Corneil}
A graph $G$ is asteroidal triple free if and only if for every vertex $x$ of $G$, no component $F$ of $N_{G}^{'}(x)$ contains unrelated vertices.
\end{thm}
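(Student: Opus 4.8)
The plan is to establish a more explicit two‑sided statement and then read off the theorem as its contrapositive: for a fixed vertex $x$ of $G$, a component $F$ of $N_{G}^{'}(x)$ contains two vertices $y,z$ that are unrelated to $x$ if and only if $\{x,y,z\}$ is an asteroidal triple of $G$. Quantifying over all $x$ and negating both sides yields exactly the claimed equivalence. The whole argument is a careful unfolding of the three notions involved ($D(\cdot,\cdot)$, ``unrelated'', and ``asteroidal triple''); the only delicate point is matching the \emph{open}-neighbourhood wording in the definition of an asteroidal triple with the \emph{closed}-neighbourhood condition that is hidden inside $D(\cdot,\cdot)$.

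First I would write down the dictionary. For vertices $u,v,w$, the statement $w\notin D(u,v)$ means exactly that some $u$--$v$ path meets neither $w$ nor any neighbour of $w$, i.e. some $u$--$v$ path lies inside the subgraph induced by $V\setminus(N_{G}(w)\cup\{w\})$; and two vertices lie in one component of $N_{G}^{'}(w)$ precisely when they are joined by such a path (if $w$ happens to lie in $N_{G}^{'}(w)$ it forms a trivial singleton component carrying no pair of vertices, so it is harmless). Thus ``$y$ and $z$ lie in a common component of $N_{G}^{'}(x)$'' unpacks to ``$xy,xz\notin E$ and $x\notin D(y,z)$'', while ``$y,z$ unrelated to $x$'' is by definition ``$y\notin D(z,x)$ and $z\notin D(y,x)$''.

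Next I would prove the direction ``configuration $\Rightarrow$ asteroidal triple''. Given $y,z\in F$ unrelated to $x$, the dictionary hands us a $y$--$z$ path avoiding $N_{G}(x)\cup\{x\}$, a $z$--$x$ path avoiding $N_{G}(y)\cup\{y\}$, and a $y$--$x$ path avoiding $N_{G}(z)\cup\{z\}$; each of these in particular avoids the relevant open neighbourhood, giving the three connecting paths required of an asteroidal triple. For independence: $xy,xz\notin E$ since $y,z\in N_{G}^{'}(x)$, and $yz\notin E$ because a $z$--$x$ path avoiding $N_{G}(y)\cup\{y\}$ cannot even start at $z$ once $z$ is a neighbour of $y$ --- this little observation is the single ``trick''. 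For the converse ``asteroidal triple $\Rightarrow$ configuration'', take an asteroidal triple $\{a,b,c\}$ and put $x=a$: since $\{a,b,c\}$ is independent the $b$--$c$ path that avoids $N_{G}(a)$ cannot pass through $a$ either (an internal occurrence of $a$ would pull a neighbour of $a$ onto the path, and $a\neq b,c$), so it stays inside $N_{G}^{'}(a)$ and shows $b,c$ lie in one component; the same reasoning upgrades the remaining two asteroidal-triple paths to paths avoiding $N_{G}(b)\cup\{b\}$ and $N_{G}(c)\cup\{c\}$, which is exactly $b\notin D(c,a)$ and $c\notin D(b,a)$, i.e. $b,c$ unrelated to $a$.

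The ``main obstacle'' is therefore mild: it is the repeated open-versus-closed-neighbourhood bookkeeping together with the fact that an endpoint of a path avoiding $N_{G}(w)\cup\{w\}$ cannot be a neighbour of $w$ --- this is what forces the path conditions and the independence condition to hold simultaneously and lets the definition-chase close. Since the result is quoted from \cite{Corneil}, one may alternatively simply cite it; the two paragraphs above are offered as a self-contained alternative.
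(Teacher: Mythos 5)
Your argument is correct: the local equivalence (for fixed $x$, a pair of unrelated vertices in one component of $N_{G}^{'}(x)$ exists if and only if that pair together with $x$ forms an asteroidal triple) does yield the theorem by negating both sides, and you handle the two genuinely delicate points properly — deducing $yz\notin E$ from the fact that an endpoint of a path avoiding $N_G(y)\cup\{y\}$ cannot be a neighbour of $y$, and upgrading "avoids the open neighbourhood of the third vertex" to "avoids its closed neighbourhood" using independence. Note that the paper itself gives no proof of this statement — it is quoted from Corneil, Olariu and Stewart — so there is nothing to compare against; your self-contained definition-unfolding is essentially the standard argument from that reference, and citing it, as you observe, would also suffice.
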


\noindent Lekkerkerker and Boland exhibited the impotance of asteroidal triples in \cite{Lekker} by proving interval graphs as chordal and AT-free. It was proved in \cite{Catanzaro} that interval graphs form a strict subclass of MPTG. But there are certain MPTG graphs which are not AT-free (see Figure \ref{atfree1}). But interestingly in the following Lemma we find the class of proper MPTG graphs to be asteroidal triple free. 

\begin{lem} \label{atfree2}
If $G$ is a proper MPTG then it is asteroidal triple free.
\end{lem}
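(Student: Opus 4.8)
The plan is to use the characterization of AT-free graphs from Theorem~\ref{atfree} (Corneil et al.): it suffices to show that for every vertex $x$, no connected component $F$ of the non-neighbourhood $N_{G}'(x)$ contains a pair of vertices that are unrelated to $x$. So I would fix a proper MPTG representation $\{(I_{v},p_{v})\mid v\in V\}$ of $G$, where $I_{v}=[a_{v},b_{v}]$ and no interval is properly contained in another; by Proposition~\ref{distinct} I may assume all endpoints (and all points $p_v$) are distinct. Since no interval properly contains another, the left endpoint order and the right endpoint order of the intervals coincide; I would fix this common linear order $<$ on $V$ and use it throughout. The first observation to record is the standard one for proper-type representations: if $u<v<w$ in this order and $uw\in E$, then since $a_u<a_v<a_w$ and $b_u<b_v<b_w$, the point set forcing $uw\in E$ lies in $I_u\cap I_w\subseteq I_v$ in a way that makes $uv,vw\in E$ as well (the relevant points $p_u,p_w$ both sit between $a_w$ and $b_u$, hence inside $I_v$; and $p_v$ lies between $a_v$ and $b_v$, between $a_w$ and $b_u$). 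In short, the neighbourhood of each vertex is an ``interval'' in the order $<$, i.e.\ $N_G[v]$ is a set of consecutive vertices.

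Next I would fix $x$ and a component $F$ of $N_G'(x)$ and take $u,v\in F$; I want to show $u$ and $v$ are related to $x$, i.e.\ $u\in D(v,x)$ or $v\in D(u,x)$. Because $N_G[x]$ is an interval in $<$ and $u,v$ are non-neighbours of $x$, they lie entirely to one side of $x$ or are split by $x$; but if they were on opposite sides of $x$, then any $u$–$v$ path would have to cross the interval $N_G[x]$, placing a neighbour of $x$ on every such path, which would make $x$ itself intercept all $u$–$v$ paths and in particular force $u\in D(v,x)$ trivially --- wait, more carefully, it forces every $u$–$v$ path to meet $N_G(x)$, hence $x\in D(u,v)$, which is the wrong set; so I must argue directly that no $u$–$v$ path inside $F$ avoids $N_G(x)$ in that case, contradicting $u,v\in F$ being connected in $N_G'(x)$. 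Hence $u$ and $v$ lie on the same side of $x$ in $<$, say both to the right: $x<u$ and $x<v$, and WLOG $u<v$. Now I claim $u\in D(v,x)$: I must show every $v$–$x$ path is intercepted by $u$.

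The heart of the argument is this interception claim, and I expect it to be the main obstacle. The idea: take any $v$–$x$ path $P$. Since $x<u<v$ and the neighbourhood of every vertex is an interval in $<$, as the path $P$ travels from $v$ (which is $>u$) down to $x$ (which is $<u$), consider the first vertex $w$ on $P$ (starting from the $v$ end) with $w\le u$, and its predecessor $w'$ on $P$ with $w'>u$; then $w'w\in E$ with $w<u<w'$ — wait, possibly $w=u$ and we are done. If $w<u$, then since $N_G[w']$ is an interval containing both $w$ and $w'$ and $u$ lies strictly between them, we get $uw'\in E$, so $u$ is adjacent to a vertex of $P$; thus $u$ intercepts $P$. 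Combined with the trivial case $u\in P$, this shows $u\in D(v,x)$, so $u$ and $v$ are related to $x$. The symmetric case (both to the left of $x$) is identical with the roles of left/right reversed. Since $x$, $F$, $u$, $v$ were arbitrary, no component of any non-neighbourhood contains unrelated vertices, so by Theorem~\ref{atfree} $G$ is AT-free. The delicate points to get exactly right are (i) the ``neighbourhood is an interval'' lemma — making sure the proper (no containment) hypothesis really does force $a_u<a_v<a_w$ and $b_u<b_v<b_w$ to interleave so that $p_u,p_w\in I_v$ — and (ii) ruling out the $x$-splits-$u$-from-$v$ configuration, which uses connectivity of $F$ within $N_G'(x)$ rather than anything about the representation.
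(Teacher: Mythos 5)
Your central observation is false, and this is the genuine gap. From $u<v<w$ in the common endpoint order and $uw\in E$ you conclude that \emph{both} $uv$ and $vw$ are edges, i.e.\ that every closed neighbourhood is a block of consecutive vertices. The justification fails exactly at the parenthetical claim that $p_v$ ``lies between $a_w$ and $b_u$'': the containment goes the other way, $I_u\cap I_w=[a_w,b_u]\subseteq I_v=[a_v,b_v]$, so knowing $p_v\in[a_v,b_v]$ tells you nothing; $p_v$ may lie in $[a_v,a_w)$ or in $(b_u,b_v]$, in which case $p_v\notin I_w$ (resp.\ $p_v\notin I_u$) and the corresponding edge is absent, since MPTG adjacency requires \emph{both} distinguished points in the intersection. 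Indeed, if your claim were true, every proper MPTG would admit an indifference (umbrella) ordering and hence be a proper interval graph; but $C_4$ and $K_{2,3}$ are proper MPTG (Figure \ref{notinterval}, Proposition \ref{bipt}) and are not even interval graphs. The paper's own representation of $C_4$ in Figure \ref{notinterval} is a concrete counterexample: in left-endpoint order $v_4<v_1<v_2<v_3$ one has $v_3v_4\in E$ but $v_2v_4\notin E$. What is actually true is only the weaker $3$-point condition of Lemma \ref{lm1}: for $u<v<w$ with $uw\in E$, \emph{at least one} of $uv,vw$ is an edge (in the endpoint order this follows since $uv,vw\notin E$ would force $b_u<p_v<a_w$, contradicting $a_w\le b_u$, which $uw\in E$ requires).

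The good news is that your architecture survives once the false lemma is replaced by this weaker statement, and the resulting argument is genuinely different from (and shorter than) the paper's proof, which orders by the points $p_v$ and runs a delicate case analysis with containment contradictions along the paths $P_1,P_2$ and inside the component $F$. In your interception step you only need $u$ to be adjacent to \emph{some} vertex of the path: with $w<u<w'$ and $ww'\in E$, the $3$-point condition gives $uw\in E$ or $uw'\in E$, and both $w,w'$ lie on $P$, so $u$ intercepts $P$ either way and $u\in D(v,x)$. Likewise in the split case $u<x<v$, any $u$--$v$ path inside $N_{G}^{'}(x)$ would contain an edge $ww'$ with $w<x<w'$, and the $3$-point condition forces $wx\in E$ or $xw'\in E$, contradicting $w,w'\in N_{G}^{'}(x)$; so that configuration cannot occur within a component of $N_{G}^{'}(x)$, and Theorem \ref{atfree} finishes the proof. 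So the route is sound, but as written the proof rests on a false consecutiveness lemma and its incorrect derivation; you must replace it by a correct proof of the $3$-point condition and rephrase the two uses accordingly.
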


\begin{proof}
Let $G=(V,E)$ be a proper MPTG with representation $\{(I_{v},p_{v})|v\in V\}$ where $I_{v}=[a_{v},b_{v}]$ be an interval and $p_{v}$ be a point within it. On contrary to Theorem \ref{atfree} lets assume there exist a vertex $x\in V$ and a component $F$ of $N_{G}^{'}(x)$ containing a pair of unrelated vertices $u,v$ (say). Let $P_{1},P_{2}$ be two arbitary paths of $D(x,v)$ and $D(x,u)$ respectively such that $u(v)$ is not adjacent to any vertex of $P_{1}(P_{2})$ where $P_{1}=(x,v_{1},\hdots,v_{n}=v), P_{2}=(x,u_{1},\hdots,u_{m}=u)$. 

\noindent Let $p_{u},p_{v}$ occur in same side of $p_{x}$ on real line (say $p_{u}<p_{v}<p_{x}$). Now if $p_{u_{1}}<p_{v}$ then $p_{u_{1}}<p_{v}<p_{x}$ clearly. Now as $u_{1},x$ are adjacent 
\begin{equation}\label{e}
a_{x}\leq p_{u_{1}}<p_{v}<p_{x}\leq b_{u_{1}}
\end{equation}
which imply $p_{v}\in I_{u_{1}}\cap I_{x}$. But as $v$ is not adjacent to $u_{1},x$, $p_{u_{1}}<a_{v}<b_{v}<p_{x}$ imply $I_{v}\subsetneq I_{u_{1}}, I_{x}$ from (\ref{e}), which a contradiction as the intervals are proper. Again if $p_{u_{m-1}}>p_{v}$ one can similarly find contradiction as $u_{m-1}u\in E$ and $vu,vu_{m-1}\notin E$. Hence $p_{u_{m-1}}(p_{u_{1}})$ must occur left (right) to $p_{v}$, i.e; $p_{u_{m-1}}<p_{v}<p_{u_{1}}$.
Now replacing $u$ by $u_{m-1}$ and $x$ by $u_{1}$ one can similarly find $p_{u_{m-2}}<p_{v}<p_{u_{2}}$. Hence using the above technique for finite number of times (as length of $P_{2}$ is finite) one can get $p_{u_{m-k}}<p_{v}<p_{u_{k}}$ for some $1<k<m$ such that $u_{k},u_{m-k}$ are adjacent in $P_{2}$. But this again introduce similar contradiction. 
 
\vspace{0.2em}
\noindent Now consider the case when $p_{u},p_{v}$ occur opposite to $p_{x}$ on real line (say $p_{u}<p_{x}<p_{v}$). As $F$ is a connected component of $N_{G}^{'}(x)$ there must exists a path $P=(u,\hdots,v)$ between $u,v$ in $F$. It is easy to observe that $p_{x}$ must fall within  an interval $I_{k}$ (say) for some vertex $k$ of the path $P$. Again as $k$ is not adjacent to $x$ either $p_{k}<a_{x}$ or $p_{k}>b_{x}$. Now if $p_{k}<a_{x}$ then there must exist some vertices $k^{'},k^{'}+1$ in $P$ such that $a_{k^{'}+1}\leq p_{k^{'}}<a_{x}<b_{x}<p_{k^{'}+1}\leq b_{k^{'}}$ where $k\leq k^{'}<v$ in $P$, which imply $I_{x}\subsetneq I_{k^{'}}, I_{k^{'}+1}$, which is not true as the intervals are proper. One can find similar contradiction when $p_{k}>b_{x}$. Hence we are done from Theorem \ref{atfree}.
\end{proof}

\noindent Hence from the above Lemma we can conclude now that all trees are not proper MPTG although they are all MPTG \cite{Soto}. It is not necessary that all AT-free graphs has to be perfect ($C_{5}$ is AT-free). However in Theorem \ref{perfect} we prove proper MPTG graphs to be perfect.  

\begin{lem} \label{Cn}
Let $C_{n}$ be a cycle of length $n$. Then $C_{n},n\geq 5$ does not belong to the class of proper MPTG.
\end{lem}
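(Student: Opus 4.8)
The plan is to prove that no cycle $C_n$ with $n \geq 5$ admits a proper MPTG representation by combining the two structural obstructions already established: $C_n$ for $n \geq 6$ contains an asteroidal triple (take three vertices pairwise at distance at least $2$ on the cycle; the three arcs joining them pairwise avoid the neighbourhood of the omitted vertex), so Lemma \ref{atfree2} immediately excludes all $C_n$ with $n \geq 6$. Thus the entire content of the lemma reduces to the single case $C_5$, which is AT-free and so must be ruled out by a direct argument about proper MPTG representations.

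For $C_5$, I would argue by contradiction. Suppose $C_5$ with vertices $v_1,v_2,v_3,v_4,v_5$ (edges $v_iv_{i+1}$, indices mod $5$) has a proper MPTG representation $\{(I_{v_i},p_{v_i})\}$ with $I_{v_i}=[a_{v_i},b_{v_i}]$. By Proposition \ref{distinct} we may assume all the relevant points are distinct. The first step is to read off what adjacency and non-adjacency mean pointwise: $v_iv_j \in E$ forces $\{p_{v_i},p_{v_j}\} \subseteq I_{v_i}\cap I_{v_j}$, while $v_iv_j \notin E$ forces $p_{v_i} \notin I_{v_j}$ or $p_{v_j} \notin I_{v_i}$. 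The key leverage is the properness condition: whenever two intervals intersect and neither properly contains the other, their left endpoints and right endpoints must be ``interleaved'' (if $a_u < a_w$ then $b_u < b_w$, once $I_u \cap I_w \neq \emptyset$). I would order the vertices, say by their points $p_{v_i}$, and track how the five intervals overlap. Since $C_5$ has independence number $2$ and every vertex has exactly two neighbours and two non-neighbours on the cycle, each interval $I_{v_i}$ must contain $p_{v_{i-1}}$ and $p_{v_{i+1}}$ but must ``miss'' $p_{v_{i+2}}$ or have its own point missed by $I_{v_{i+2}}$, and likewise for $v_{i-2}$.

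The main obstacle — and the heart of the proof — is the careful case analysis showing these constraints cannot be simultaneously satisfied with proper intervals. I would exploit the $4$-point condition of Theorem \ref{mptg1} (every proper MPTG is an MPTG, hence has an MPTG ordering) together with properness: fix an MPTG ordering of the five vertices; since $C_5$ is connected with no universal vertex, the ordering's first and last vertices are non-adjacent, and one then chases the forced edges. Concretely, if the MPTG ordering is $w_1 < w_2 < w_3 < w_4 < w_5$, then $w_1w_5 \notin E$; applying the $4$-point condition to various quadruples pins down which of the $\binom{5}{2}-5 = 5$ non-edges occur, and one checks this is incompatible with the cyclic non-edge structure $\{v_iv_{i+2}\}$ unless properness is violated — the properness violation arising exactly as in the proof of Lemma \ref{atfree2}, where a vertex's point lies inside two intervals whose configuration forces a proper containment. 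I expect the bookkeeping to be the only real difficulty; the underlying mechanism (properness plus the $4$-point condition obstructs the pentagon's non-edge pattern) is robust, and once the case split is organized by the position of the ``short'' interval relative to the others it should close cleanly.
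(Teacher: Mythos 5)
Your reduction of the case $n\geq 6$ to Lemma \ref{atfree2} via an asteroidal triple is exactly the paper's first step and is fine. The gap is in the $C_5$ case, which is the entire substance of the lemma: what you give there is a plan, not a proof. The one concrete structural claim you make to organize the case analysis --- that in an MPTG ordering of $C_5$ the first and last vertices must be non-adjacent because $C_5$ is connected with no universal vertex --- is false. For instance the ordering $v_2 < v_1 < v_5 < v_4 < v_3$ satisfies the $4$-point condition (\ref{4p1}) vacuously (no quadruple $x<u<v<y$ in this order has both $xv$ and $uy$ as edges of the pentagon), yet its first and last vertices $v_2,v_3$ are adjacent. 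More fundamentally, since $C_5$ \emph{is} an MPTG, no amount of chasing the $4$-point condition over quadruples can ``pin down'' a contradiction; the obstruction has to come entirely from properness, and the step you defer as ``bookkeeping'' --- showing that the pentagon's non-edge pattern forces one interval to be properly contained in another --- is precisely the multi-case endpoint analysis (cases on whether $p_1$ lies between $p_2$ and $p_5$, then on the positions of $p_3,p_4$ relative to $a_5,b_1,b_2$, etc.) that constitutes the paper's proof. Asserting that the containment ``arises exactly as in the proof of Lemma \ref{atfree2}'' does not substitute for it, because that lemma's mechanism concerns a vertex non-adjacent to both endpoints of an edge on a path avoiding its neighbourhood, which is not the configuration one faces inside the pentagon.

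If you want a genuinely short combinatorial route for $C_5$, the right tool is not the $4$-point condition but the $3$-point condition of Lemma \ref{lm1} (whose proof is independent of this lemma): in any proper MPTG the ordering by the points $p_v$ has the property that for $x<y<z$ with $xz\in E$, $y$ is adjacent to $x$ or to $z$; equivalently, for each edge of $C_5$ the unique vertex non-adjacent to both endpoints may not lie between them. A short check (fix the first vertex, say $v_1$; the last cannot be a neighbour of $v_1$, since then the opposite vertex of that edge would be forced between them; if the last is $v_3$, the constraints force $v_5<v_2<v_4$ among the middle vertices, which violates the constraint for the edge $v_4v_5$) shows no ordering of $C_5$ satisfies this, giving the contradiction directly. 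As written, however, your argument for $C_5$ rests on a false claim and an unexecuted case analysis, so it does not yet establish the lemma.
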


\begin{proof}
As proper MPTG graphs are AT-free follows from Lemma \ref{atfree2}, $C_{n},n\geq 6$ does not belong to the class of proper MPTG as $\{v_{1},v_{3},v_{n-1}\}$ form an asteroidal triple.
 \vspace{0.1em}
 
\noindent On contrary let $C_{5}=(V,E)$ has a proper MPTG representation $\{(I_{v},p_{v})|v\in V\}$ where $I_{v}=[a_{v},b_{v}]$ be an interval and $p_{v}$ be a point within it. Let $\{v_{i}|1\leq i\leq 5\}$ be the vertices occurred in circularly consecutive way in clockwise (anticlockwise) order in $C_{5}$. Now if $p_{1}$ occurs between $p_{2},p_{5}$ on real line (say $p_{2}<p_{1}<p_{5}$) then as $v_{2}v_{5}\notin E$ either $p_{2}<a_{5}$ or $p_{5}>b_{2}$.

\noindent If $p_{2}<a_{5}$ then $a_{1}<a_{5}$ (as $v_{1}v_{2}\in E$) and hence $b_{1}<b_{5}$ as the intervals are proper. Thus we get $I_{1}\cap I_{5}=[a_{5},b_{1}]$. Now as $v_{4}v_{5}\in E$, $p_{4}\in I_{5}$. Hence $p_{2}<a_{5}\leq p_{4}$. If $p_{4}$ occurs left to $p_{1}$ then $p_{4}\in [a_{5},p_{1}]$. But as $v_{1}v_{4}\notin E$, $b_{4}<p_{1}$ which imply $b_{4}<p_{5}$ which contradicts $v_{4}v_{5}\in E$. Hence $p_{4}>p_{1}$. Now as $v_{3}$ is adjacent to both $v_{2},v_{4}$, $[p_{2},p_{4}]\subseteq I_{3}$. Therefore we get $p_{1}\in I_{3}$ as $p_{2}<p_{1}<p_{4}$. But as $v_{1}v_{3}\notin E$, either $p_{3}>b_{1}$ or $p_{3}<a_{1}$.

\vspace{0.2em}
\noindent If $p_{3}>b_{1}$ then from the previous relations we get
\begin{equation}\label{p1}
a_{3}\leq p_{2}<a_{5}<p_{5}\leq b_{1}<p_{3}\leq b_{2}.
\end{equation}
Hence $p_{5}\in I_{3}$ from (\ref{p1}). But as $v_{3}v_{5}\notin E$, $b_{5}<p_{3}$, which imply $I_{5}\subsetneq I_{3}$ from (\ref{p1}), which is a contradiction as the intervals are proper. Again if $p_{3}<a_{1}$ then $a_{4}\leq p_{3}<a_{1}$ and therefore $b_{4}<b_{1}$. Combining we get 
  
  \begin{equation}\label{k1}
  a_{4}<a_{1}\leq p_{2}<a_{5}\leq p_{1}<p_{5}\leq b_{4}<b_{1}.
  \end{equation}
  
\noindent  Hence $p_{1}\in I_{4}$ from (\ref{k1}). But as $v_{1}v_{4}\notin E$, $p_{4}>b_{1}$. Thus we get $I_{1}\subsetneq I_{4}$ from (\ref{k1}), which is not true as the intervals are proper. One can similarly find contradiction when $p_{5}>b_{2}$.  

\noindent Now if $p_{2},p_{5}$ occurs in same side of $p_{1}$ (say $p_{2}<p_{5}<p_{1}$) then $p_{5}\in[p_{2},p_{1}]\subseteq I_{1},I_{2}$. But as $v_{2}v_{5}\notin E$, $a_{5}>p_{2}$ and hence $a_{5}>a_{1}$ and therefore 
$b_{5}>b_{1}$ as the intervals are proper. Hence 
\begin{equation}\label{s1}
I_{1}\cap I_{5}=[a_{5},b_{1}]. 
\end{equation}
Now $v_{4}v_{5}\in E$ imply $p_{4}\in I_{5}$. Now if $p_{4}<b_{1}$ then $p_{4}\in I_{1}\cap I_{5}$. But as $v_{1}v_{4}\notin E$, $b_{4}<p_{1}<b_{1}$ which imply $a_{4}<a_{1}$. Hence combining we get
$a_{4}<a_{1}\leq p_{2}<a_{5}\leq p_{5},p_{4}\leq b_{4}<p_{1}\leq b_{2}$ 
Thus $p_{2}\in I_{4}$ and $p_{4}\in I_{2}$, which is not true as $v_{2}v_{4}\notin E$. 

\vspace{0.2em}
\noindent Next if $p_{4}>b_{1}$ then $b_{4}\geq p_{4}>b_{1}$ imply $a_{4}>a_{1}$. Hence we get
\begin{equation}\label{s3}
a_{1}\leq p_{2}<a_{5}\leq p_{5}<p_{1}\leq b_{1}<p_{4}\leq b_{4},b_{5}.
\end{equation}
\noindent Now as $v_{3}$ is adjacent to $v_{2},v_{4}$, $[p_{2},p_{4}]\subseteq I_{3}$. This imply $p_{5},p_{1}\in I_{3}$ from (\ref{s3}). 
But as $v_{3}v_{5}, v_{1}v_{5}\notin E$, $p_{3}\notin I_{1}, I_{5}$ and therefore does not belong to $I_{1}\cap I_{5}$. Hence $p_{3}<a_{5}$ or $p_{3}>b_{1}$ from (\ref{s1}). 
\vspace{0.1em}

\noindent Now if $p_{3}<a_{5}$ then $p_{3}<a_{1}$ from (\ref{s3}) as $p_{3}\notin I_{1}$. Hence combining we get $a_{4}\leq p_{3}<a_{1}
<b_{1}<p_{4}\leq b_{4}$. This imply $I_{1}\subsetneq I_{4}$ which is a contradiction. Again if $p_{3}>b_{1}$ then $p_{3}>b_{5}$ as $p_{3}\notin I_{5}$ from (\ref{s3}). Now as $a_{3}\leq p_{2}$ from (\ref{s3}) we get $I_{5}\subsetneq I_{3}$ which is again not true as the intervals are proper. 
\end{proof}

\noindent We note that the $3$-wheel graph $W_{3}=K_{4}$ is a proper MPTG from Proposition \ref{complete}. Again the $4$-wheel graph $W_{4}$ is also a proper MPTG with representation
$([30,130],80)$ for center vetex $u$ and $([20,120],50),([10,100],70)$, $([60,150],90)$,
$([40,140],110)$ for other vertices.

\begin{cor}
The $n$-wheel graphs $W_{n}$ for $n\geq 5$ are not proper MPTG.
\end{cor}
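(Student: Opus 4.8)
The plan is to reduce the claim to known facts about small cycles and wheels. Recall that the $n$-wheel $W_n$ consists of a cycle $C_n$ on the rim vertices $v_1,\dots,v_n$ together with a hub vertex $u$ adjacent to all of them. The key observation is that the induced subgraph on the rim, $W_n[\{v_1,\dots,v_n\}]$, is exactly $C_n$, and that the class of proper MPTG graphs is hereditary: deleting a vertex from a proper MPTG representation leaves a proper MPTG representation (the remaining intervals are still pairwise non-properly-contained and the adjacencies are unchanged). So if $W_n$ were proper MPTG for some $n\geq 5$, then $C_n$ would be proper MPTG as an induced subgraph.

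First I would invoke Lemma~\ref{Cn}, which states precisely that $C_n$ for $n\geq 5$ is not proper MPTG. Combined with heredity this immediately yields that $W_n$ is not proper MPTG for every $n\geq 5$. This is essentially the whole argument; the corollary is a one-line consequence once Lemma~\ref{Cn} is in hand.

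I do not expect any genuine obstacle here — the only thing to be careful about is making the heredity of the class explicit, since it is used implicitly. One would note that if $S=\{(I_w,p_w)\mid w\in V(W_n)\}$ is a proper MPTG representation, then $S'=\{(I_w,p_w)\mid w\in\{v_1,\dots,v_n\}\}$ is a valid representation of $C_n$: for rim vertices $v_i,v_j$ we have $v_iv_j\in E(C_n)$ iff $v_iv_j\in E(W_n)$ iff $\{p_{v_i},p_{v_j}\}\subseteq I_{v_i}\cap I_{v_j}$, and no interval of $S'$ is properly contained in another since this already failed in the larger set $S$. Hence $C_n$ is proper MPTG, contradicting Lemma~\ref{Cn}. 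Therefore $W_n$ is not proper MPTG for $n\geq 5$.
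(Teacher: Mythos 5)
Your proposal is correct and is essentially the paper's own argument: delete the hub vertex, note that proper MPTG is hereditary under vertex deletion, and apply Lemma~\ref{Cn} to the remaining rim cycle $C_n$. Your explicit justification of heredity is a small addition the paper leaves implicit, but the route is the same.
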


\begin{proof}
If $W_{n},n\geq5$ be a proper MPTG then $W_{n}\setminus \{u\}$ would also be a proper MPTG where $u$ is the central vertex of $W_{n}$. But this is not true from Lemma \ref{Cn} as $W_{n}\setminus \{u\}=C_{n}$.
\end{proof}

\begin{lem}\label{Cnbar}
 $\overline{C_{n}},n\geq 7$ does not belong to the class of MPTG.
\end{lem}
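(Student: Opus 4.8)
The plan is to show that $\overline{C_n}$ for $n\ge 7$ contains an induced subgraph that is already known not to be a MPTG, and to exploit the fact (used repeatedly in the excerpt) that the MPTG property is hereditary: any induced subgraph of a MPTG is again a MPTG, since one simply restricts the interval--point representation to the relevant vertices. So it suffices to locate, inside $\overline{C_n}$, a small obstruction. A natural candidate is a large complete bipartite graph or, more usefully here, a forbidden configuration detectable through the $4$-point condition of Theorem \ref{mptg1}(\ref{4pt}). I would first write $C_n$ with vertices $v_1,\dots,v_n$ in cyclic order, so that in $\overline{C_n}$ two vertices $v_i,v_j$ are adjacent precisely when they are \emph{not} cyclically consecutive, i.e. $|i-j|\not\equiv 0,\pm1 \pmod n$. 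The only non-edges of $\overline{C_n}$ are the $n$ pairs $\{v_i,v_{i+1}\}$.

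The core of the argument is to assume, for contradiction, that $\overline{C_n}$ ($n\ge 7$) has a MPTG vertex ordering $\prec$ and derive a violation of the $4$-point condition \eqref{4p1}: there should be no quadruple $x\prec u\prec v\prec y$ with $xv, uy\in E$ but $uv\notin E$. Since $\overline{C_n}$ has only $n$ non-edges, all of the form $\{v_i,v_{i+1}\}$, the "bad" middle pair $uv$ must be a consecutive pair $\{v_k,v_{k+1}\}$ of the original cycle. So I would analyze, for the consecutive pair placed as the two middle vertices in $\prec$, which vertices can lie to its left and which to its right; the key point is that almost every other vertex $v_j$ is adjacent to \emph{both} $v_k$ and $v_{k+1}$ (the exceptions being $v_{k-1}$, adjacent to $v_{k+1}$ but not $v_k$, and $v_{k+2}$, adjacent to $v_k$ but not $v_{k+1}$). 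Picking such a "universal-to-the-pair" vertex $x$ on the left and another $y$ on the right of the middle pair immediately yields $x\prec u\prec v\prec y$ with $xv,uy\in E$, $uv\notin E$ — a contradiction — unless essentially all of these vertices are forced to one side. Counting: with $n\ge 7$ there are at least $n-2\ge 5$ vertices adjacent to both endpoints of any consecutive pair, so they cannot all avoid being split across the middle pair for \emph{every} choice of which non-edge sits in the middle. Making this pigeonhole precise is the technical heart.

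Concretely, I would argue as follows. In the ordering $\prec$, look at the leftmost vertex $a$ and the rightmost vertex $b$; the non-edge condition forces structure on what can be "between". Alternatively, and more cleanly, use part (3) of Theorem \ref{mptg1}: for any non-edge $uv$ with $u\prec v$, either $u$ is non-adjacent to everything after $v$, or $v$ is non-adjacent to everything before $u$. Apply this to the non-edge $\{v_k,v_{k+1}\}$: WLOG $v_k\prec v_{k+1}$; then either $v_k$ is non-adjacent to all vertices $\succ v_{k+1}$, or $v_{k+1}$ is non-adjacent to all vertices $\prec v_k$. In $\overline{C_n}$ the only non-neighbours of $v_k$ are $v_{k-1},v_{k+1}$, so "$v_k$ non-adjacent to all $w\succ v_{k+1}$" forces that at most one vertex (namely $v_{k-1}$) lies after $v_{k+1}$; symmetrically the other alternative forces at most one vertex (namely $v_{k+2}$) before $v_k$. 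Doing this simultaneously for \emph{all} $n$ consecutive non-edges pins down the ordering so tightly that for $n\ge 7$ it becomes impossible — e.g. it would force two distinct consecutive pairs both to occupy the extreme positions. I expect the main obstacle to be packaging this simultaneous constraint cleanly: one must track, for each of the $n$ non-edges, which of the two "one-sided" alternatives holds, and show the resulting system of constraints on $\prec$ is inconsistent for $n\ge 7$ (while, reassuringly, $\overline{C_6}=C_3\times K_2$-type small cases and $\overline{C_5}=C_5$ need to be excluded from the claim, which matches the hypothesis $n\ge 7$).
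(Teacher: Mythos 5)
Your route is genuinely different from the paper's. The paper works with the induced $4$-cycle on $\{v_1,v_3,v_n,v_4\}$, uses the $4$-point condition (\ref{4p1}) to force the two nonadjacent pairs of that cycle to interleave in any MPTG ordering, then (for $n\ge 8$) traps $v_{n-3},v_{n-2}$ between the extremes and contradicts (\ref{4p1}) again, treating $n=7$ by a separate ad hoc check. You instead apply condition (3) of Theorem \ref{mptg1} to each of the $n$ non-edges $\{v_k,v_{k+1}\}$ and aim to show the resulting positional constraints are jointly unsatisfiable; this has the merit of handling $n=7$ uniformly (and your opening thought of finding a complete bipartite obstruction would not work, since $K_{m,n}$ is an MPTG by Lemma \ref{MPTG}). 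However, as written there is a genuine gap: the decisive step is only asserted. You say the constraints ``pin down the ordering so tightly that for $n\ge 7$ it becomes impossible'' and defer ``making the pigeonhole precise'' as the technical heart, and the hinted mechanism (two consecutive pairs forced onto the extreme positions) is not an argument. Without that count the proof is incomplete.

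The gap does close, with a short count that finishes your approach. Suppose $\overline{C_n}$, $n\ge 7$, has an MPTG ordering, and let $a,b,y,z$ be the vertices in positions $1,2,n-1,n$. Every vertex of $\overline{C_n}$ has exactly two non-neighbours (its two $C_n$-neighbours). For a non-edge $\{u,v\}$ with $u\prec v$, condition (3) of Theorem \ref{mptg1} gives either (A) all vertices after $v$ are non-neighbours of $u$, hence $v\in\{y,z\}$, and if $v=y$ then $z$ must be the second non-neighbour of $u$, i.e.\ a $C_n$-neighbour of $u$; or (B) symmetrically $u\in\{a,b\}$, and if $u=b$ then $a$ must be a $C_n$-neighbour of $v$. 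Non-edges incident to $a$ or to $z$ number at most $4$ (each vertex lies in exactly two non-edges), and these satisfy (B) or (A) vacuously. Any remaining non-edge must either have earlier endpoint $b$, in which case its other endpoint is a common $C_n$-neighbour of $a$ and $b$, or have later endpoint $y$, in which case its other endpoint is a common $C_n$-neighbour of $y$ and $z$; since two vertices of $C_n$ ($n\ge 5$) have at most one common neighbour, each of these two types contributes at most one non-edge. Hence $\overline{C_n}$ has at most $4+1+1=6$ non-edges, contradicting the fact that it has $n\ge 7$ of them. With this paragraph added, your proof is correct and arguably cleaner than the paper's, since no separate treatment of $n=7$ is needed.
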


\begin{proof}
One can note that $\{v_{1},v_{4},v_{n},v_{3}\}$ form an induced $4$-cycle ($C$ say) in $\overline{C_{n}},n\geq 7$. From (\ref{4p1}) one can find that in any MPTG ordering of vertices of $C$ neither $\{v_{1},v_{n}\}$ nor $\{v_{4},v_{3}\}$ can sit together within other.  
 
\noindent Now if we consider  $v_{1}\prec v_{3}\prec v_{n}\prec v_{4}$ in some MPTG ordering of $C$ then it is easy to verify from (\ref{4p1}) that when $n\geq8$ neither $v_{n-3}$ nor $v_{n-2}$ can occur prior to $v_{1}$ or after $v_{4}$ as $v_{n-3},v_{n-2}$ are adjacent to $v_{3}, v_{n}$ respectively. Hence they have to sit within $v_{1},v_{4}$. Now one can find contradiction from (\ref{4p1}) applying it on the vertex set $\{v_{1},v_{n-3},v_{n-2},v_{4}\}$ as $v_{n-3},v_{n-2}$ are nonadjacent. Similar contradiction will arise for other MPTG orderings of $C$.
Again for $n=7$ repeatedly applying (\ref{4p1}) one can able to show there exist no MPTG ordering of $\overline{C_{7}}$.  
\end{proof}

\noindent From the aforementioned Lemma it is obvious that proper MPTG graphs can not contain $\overline{C_{n}},n\geq 7$ as induced subgraph. Combining this fact with Lemma \ref{Cn} one can conclude the following

\begin{thm}\label{perfect}
The class of proper MPTG graphs are perfect.
\end{thm}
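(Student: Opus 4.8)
The plan is to derive perfection directly from the Strong Perfect Graph Theorem of Chudnovsky, Robertson, Seymour and Thomas, which states that a graph is perfect if and only if it contains neither an odd hole $C_{2k+1}$ ($k\geq 2$) nor an odd antihole $\overline{C_{2k+1}}$ ($k\geq 2$) as an induced subgraph. So the whole task reduces to checking that none of these forbidden subgraphs can occur inside a proper MPTG graph.

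First I would record the (routine) fact that the class of proper MPTG graphs is hereditary: if $S=\{(I_v,p_v)\mid v\in V\}$ is a proper MPTG representation of $G$ and $W\subseteq V$, then $\{(I_v,p_v)\mid v\in W\}$ is a proper MPTG representation of the induced subgraph $G[W]$, since adjacencies among vertices of $W$ are unchanged and passing to a subfamily of intervals cannot create a proper containment. Hence it suffices to verify that no odd hole and no odd antihole is itself a proper MPTG graph.

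Next I would dispose of the two forbidden families using the lemmas already proved. Every odd hole is a cycle $C_n$ with $n\geq 5$, and by Lemma \ref{Cn} no such cycle is a proper MPTG graph. For the antiholes, the unique odd antihole of length five is $\overline{C_5}\cong C_5$, which is again excluded by Lemma \ref{Cn}; and every odd antihole of length at least seven has the form $\overline{C_n}$ with $n\geq 7$, which by Lemma \ref{Cnbar} is not even an MPTG graph, hence a fortiori not a proper MPTG graph. Combining heredity with these exclusions and the Strong Perfect Graph Theorem yields that proper MPTG graphs are perfect.

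I do not expect a genuine obstacle here: the substantive work is already contained in Lemmas \ref{Cn} and \ref{Cnbar}, and the only point that needs care is the small overlap $\overline{C_5}\cong C_5$, so that the length-five antihole is covered by the odd-hole lemma rather than by Lemma \ref{Cnbar}. If one instead wished to avoid invoking the Strong Perfect Graph Theorem, one would have to decide separately whether graphs such as $\overline{C_6}$ are proper MPTG in order to argue via weak chordality, which would demand additional case analysis; the route above is the cleaner one.
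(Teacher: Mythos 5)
Your proposal is correct and follows essentially the same route as the paper: the paper also deduces perfection by combining Lemma \ref{Cn} (no induced $C_n$, $n\geq 5$) with Lemma \ref{Cnbar} (no induced $\overline{C_n}$, $n\geq 7$), implicitly invoking heredity and the Strong Perfect Graph Theorem. Your write-up merely makes explicit the heredity of the class and the overlap $\overline{C_5}\cong C_5$, which the paper leaves tacit.
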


\begin{figure} 
\begin{minipage}[b]{0.5\textwidth}
\begin{tikzpicture}
\draw[-][draw=black, thick] (0,0) -- (0,0.5);
\draw[-][draw=black, thick] (0,0.5) -- (0,1);
\draw[-][draw=black, thick] (0,0) -- (0.5,-0.5);
\draw[-][draw=black, thick] (0,0) -- (1,-1);
\draw[-][draw=black, thick] (0,0)--(-0.5,-0.5);
\draw[-][draw=black, thick] (-0.5,-0.5)--(-1,-1);

\draw [fill=black] (0,0) circle [radius=0.09];
\draw [fill=black] (0,0.5) circle [radius=0.09];
\draw [fill=black] (0,1) circle [radius=0.09];
\draw [fill=black] (0.5,-0.5) circle [radius=0.09];
\draw [fill=black] (1,-1) circle [radius=0.09];
\draw [fill=black] (-0.5,-0.5) circle [radius=0.09];
\draw [fill=black] (-1,-1) circle [radius=0.09];

\node [below] at (0,0) {{$v_{5}$}};
\node [right] at (0.5,-0.5) {{$v_{7}$}};
\node [right] at (1,-1) {{$v_{3}$}};
\node [left] at (-0.5,-0.5) {{$v_{6}$}};
\node [left] at (-1,-1) {{$v_{2}$}};
\node [left] at (0,0.5) {{$v_{4}$}};
\node [left] at (0,1) {{$v_{1}$}};

\draw[-] [draw=black, thick] (0.4,1.1)--(3.7,1.1);
\draw[-] [draw=black, thick] (0.6,0.7)--(5,0.7);
\draw[-] [draw=black, thick] (1,0.3)--(6.3,0.3);
\draw[-] [draw=black, thick] (4.1,-0.1)--(7.7,-0.1);
\draw[-] [draw=black, thick] (2.5,-0.6)--(5.6,-0.6);
\draw[-] [draw=black, thick] (5.8,-0.6)--(8.1,-0.6);
\draw[-] [draw=black, thick] (3,-1)--(4.4,-1);

\draw [fill=black] (1,1.1) circle [radius=0.06];
\draw [fill=black] (2,0.7) circle [radius=0.06];
\draw [fill=black] (4.6,0.3) circle [radius=0.06];
\draw [fill=black] (5.1,-0.1) circle [radius=0.06];
\draw [fill=black] (3.4,-0.6) circle [radius=0.06];
\draw [fill=black] (7,-0.6) circle [radius=0.06];
\draw [fill=black] (3.8,-1) circle [radius=0.06];

\node [above] at (1,1.1) {{$p_{2}$}};
\node [above] at (2,0.7) {{$p_{6}$}};
\node [above] at (4.6,0.3) {{$p_{5}$}};
\node [above] at (5.1,-0.1) {{$p_{7}$}};
\node [above] at (3.4,-0.6) {{$p_{4}$}};
\node [above] at (7,-0.6) {{$p_{3}$}};
\node [above] at (3.8,-1) {{$p_{1}$}};

\node [right] at (3.7,1.1) {{$I_{2}$}};
\node [right] at (5.4,0.7) {{$I_{6}$}};
\node [right] at (6.3,0.3) {{$I_{5}$}};
\node [right] at (7.7,-0.1) {{$I_{7}$}};
\node [left] at (2.5,-0.6) {{$I_{4}$}};
\node [right] at (8.1,-0.6) {{$I_{3}$}};
\node [left] at (3,-1) {{$I_{1}$}};
\end{tikzpicture}
\caption{MPTG $G_{1}$ having asteroidal triple $\{v_{1},v_{2},v_{3}\}$}\label{atfree1}
\end{minipage}
\hfill
\begin{minipage}[b]{0.41\textwidth}

\begin{tikzpicture}
\draw[-][draw=black, very thick] (0,0) -- (2,0);
\draw[-][draw=black,very thick] (2,0) -- (2,2);
\draw[-][draw=black,very thick] (2,2) -- (0,2);
\draw[-][draw=black,very thick] (0,0) -- (0,2);
\draw [fill=black] (0,0) circle [radius=0.09];
\draw [fill=black] (2,0) circle [radius=0.09];
\draw [fill=black] (0,2) circle [radius=0.09];
\draw [fill=black] (2,2) circle [radius=0.09];
\node [below] at (0,0) {{$v_{1}$}};
\node [below] at (2,0) {{$v_{2}$}};
\node [right] at (2,2) {{$v_{3}$}};
\node [left] at (0,2) {{$v_{4}$}};
\draw[-] [draw=black, thick] (2.2,0.3)--(5.9,0.3);
\draw[-] [draw=black, thick] (3,0.8)--(6.4,0.8);
\draw[-] [draw=black, thick] (3.6,1.2)--(6.8,1.2);
\draw[-] [draw=black, thick] (4.5,1.6)--(7.8,1.6);
\draw [fill=black] (5.2,0.3) circle [radius=0.05];
\draw [fill=black] (4,0.8) circle [radius=0.05];
\draw [fill=black] (6,1.2) circle [radius=0.05];
\draw [fill=black] (5.5,1.6) circle [radius=0.05];
\node [right] at (5.9,0.3) {{$I_{4}$}};
\node [right] at (6.4,0.8) {{$I_{1}$}};
\node [right] at (6.8,1.1) {{$I_{2}$}};
\node [right] at (7.8,1.6) {{$I_{3}$}};
\node [above] at (5.2,0.3) {{$p_{4}$}};
\node [above] at (4,0.8) {{$p_{1}$}};
\node [above] at (6,1.2) {{$p_{2}$}};
\node [above] at (5.5,1.6) {{$p_{3}$}};
\end{tikzpicture}
\caption{$C_{4}$ with its proper MPTG representation}\label{notinterval}
\end{minipage}
\end{figure}

\noindent In the following Lemma we present a necessary condition for proper MPTG graphs.

\begin{lem}\label{lm1}
Let $G=(V,E)$ be a proper MPTG. Then there exist an ordering of vertices of $V$ which satisfy the following:

\begin{enumerate}

\item For any $v_{1},v_{2},v_{3}\in V$, $v_{1}<v_{2}<v_{3}, v_{1}v_{3}\in E \Rightarrow v_{1}v_{2}\in E$ or $v_{2}v_{3}\in E$.	($3$-point condition)\label{3pt}

\item For any $v_{1},v_{2},v_{3},v_{4},v_{5}\in V$, $v_{1}<v_{2}<v_{3}<v_{4}<v_{5}, v_{1}v_{4},v_{2}v_{5}\in E \Rightarrow v_{1}v_{3}\in E$ or $v_{3}v_{5}\in E$ when $v_{1}v_{2}$ or $v_{4}v_{5}\in E$.  ($5$-point condition-$1$)\label{5pt1}

\item For any $v_{1},v_{2},v_{3},v_{4},v_{5}\in V$, $v_{1}<v_{2}<v_{3}<v_{4}<v_{5}, v_{1}v_{3},v_{3}v_{5}\in E \Rightarrow v_{1}v_{2}\in E$ or $v_{2}v_{4}\in E$ or $v_{4}v_{5}\in E$. ($5$-point condition-$2$)\label{5pt2}

\item For any $v_{1},v_{2},v_{j},v_{k},v_{5},v_{6}\in V$, $v_{1}<v_{2}<v_{j},v_{k}<v_{5}<v_{6}$, $v_{1}v_{j}, v_{j}v_{5},v_{2}v_{k},v_{k}v_{6}\in E$ for some $j,k$ such that $2<j,k<5 \Rightarrow v_{1}v_{2}\in E$ or $v_{2}v_{5}\in E$ or $v_{5}v_{6}\in E$. ($6$-point condition)\label{6pt}
\end{enumerate}
\end{lem}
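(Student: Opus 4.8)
The plan is to work directly with a proper MPTG representation $\{(I_v,p_v)\mid v\in V\}$, $I_v=[a_v,b_v]$, and to take the vertex ordering induced by the \emph{distinct} points $p_v$ (legitimate by Proposition~\ref{distinct}): write $u<v$ iff $p_u<p_v$. Recall that this is already an MPTG ordering (Theorem~\ref{mptg1}), and the extra leverage we have over the general MPTG case is the ``no proper containment'' hypothesis: whenever two intervals intersect, neither is strictly inside the other, so $a_u<a_v \Rightarrow b_u\le b_v$ for intervals that meet, and more importantly, if $p_w\in I_x\cap I_y$ but $w$ is adjacent to neither $x$ nor $y$ then $I_w$ would have to be properly contained in both $I_x$ and $I_y$ — the exact contradiction engine used throughout Lemmas~\ref{atfree2} and \ref{Cn}. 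Each of the four numbered conditions will be proved by assuming the conclusion fails and extracting such a forbidden containment.

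First I would dispatch the $3$-point condition: if $v_1<v_2<v_3$ with $v_1v_3\in E$, then $p_{v_1},p_{v_3}\in I_{v_1}\cap I_{v_3}$, so the point $p_{v_2}$ lies strictly between them and hence in $I_{v_1}\cap I_{v_3}$. If $v_2$ is adjacent to neither $v_1$ nor $v_3$, then $p_{v_1}<a_{v_2}\le b_{v_2}<p_{v_3}$ (using $p_{v_1}<p_{v_2}<p_{v_3}$ and non-adjacency on both sides), which forces $a_{v_1}<a_{v_2}$ and $b_{v_2}<b_{v_3}$, combined with $a_{v_1}\le p_{v_1}$, $b_{v_3}\ge p_{v_3}$; writing the chain $a_{v_1}\le p_{v_1}<a_{v_2}\le b_{v_2}<p_{v_3}\le b_{v_3}$ shows $I_{v_2}\subsetneq I_{v_1}\cap I_{v_3}$, contradicting properness. (One must check the endpoints of $I_{v_1}$ and $I_{v_3}$ on the far sides don't coincide with $a_{v_2},b_{v_2}$; distinctness of interval endpoints, as noted in the proof of Proposition~\ref{distinct}, or a direct perturbation argument, handles this.) For the $5$-point condition-$1$, assume $v_1v_4,v_2v_5\in E$, $v_1v_2\in E$ (the case $v_4v_5\in E$ is symmetric under reversing the order), and that $v_1v_3,v_3v_5\notin E$. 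From $v_1v_4\in E$ and $v_2v_5\in E$ the interval $I_{v_3}$ sits ``in the middle'': $p_{v_3}$ is squeezed, and I would combine non-adjacency of $v_3$ to $v_1$ (so $p_{v_3}>b_{v_1}$ or $p_{v_3}<a_{v_1}$, but $a_{v_1}\le p_{v_1}<p_{v_3}$ rules out the latter, hence $p_{v_3}>b_{v_1}$) with non-adjacency of $v_3$ to $v_5$ ($p_{v_3}<a_{v_5}$) and the edge $v_1v_2$ to force $I_{v_4}$ or $I_{v_1}$ into a proper-containment sandwich, exactly as in the displayed inequality chains (\ref{p1})–(\ref{s3}) of Lemma~\ref{Cn}.

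The $5$-point condition-$2$ and the $6$-point condition are the heart of the matter and where I expect the real work. For condition-$2$: given $v_1v_3,v_3v_5\in E$ and $v_1v_2,v_2v_4,v_4v_5\notin E$, the idea is that $v_2$ non-adjacent to $v_1$ on its left forces $a_{v_2}>p_{v_1}$ hence (properness, via $v_1v_3$) a statement about $b$'s; $v_4$ non-adjacent to $v_5$ on its right forces $b_{v_4}<p_{v_5}$; then $v_2v_4\notin E$ together with the fact that $p_{v_3}\in I_{v_1}\cap I_{v_3}$ and $p_{v_3}\in I_{v_3}\cap I_{v_5}$ pins $I_{v_2}$ or $I_{v_4}$ or $I_{v_3}$ into a containment. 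For the $6$-point condition one has two ``long'' edges $v_1v_j,v_j v_5$ and $v_2v_k,v_k v_6$ crossing in the middle, with $j,k\in\{3,4\}$ (and the cases $j=k$, $j<k$, $j>k$ treated separately); assuming $v_1v_2,v_2v_5,v_5v_6$ all absent, I would use $v_2v_5\notin E$ to locate $a_{v_5}$ or $b_{v_2}$, then play $v_1v_2\notin E$ and $v_5v_6\notin E$ against the four given edges to produce an interval properly contained in two others. The main obstacle is purely combinatorial bookkeeping: there are several sub-cases depending on which side of $p_x$ a non-edge ``opens'' (left-open vs. up-open, in the language of Theorem~\ref{mptg1}(4)) and on the relative order of $v_j,v_k$, and in each branch one must assemble the correct chain of $\le$'s among the $a$'s, $b$'s, and $p$'s; I would organize this by always reducing to the template ``a point of one interval lies in the intersection of two others to which its owner is non-adjacent, hence proper containment,'' which is the single contradiction used in every branch. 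Once all four items are verified for this ordering, the lemma follows.
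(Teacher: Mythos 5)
Your overall strategy coincides with the paper's: order the vertices by the (distinct) points $p_v$, assume a condition fails, and refute it by exhibiting a forbidden proper containment; your treatment of the $3$-point condition is correct and essentially the paper's argument. However, your concrete derivation for the $5$-point condition-$1$ contains a genuine error. Non-adjacency of $v_1,v_3$ gives only the disjunction $p_1\notin I_3$ \emph{or} $p_3\notin I_1$, not the one-sided conclusion $p_3\notin I_1$ that you assert; and in this configuration the edge $v_1v_4$ forces $a_1\le p_1<p_3<p_4\le b_1$, i.e.\ $p_3\in I_1$, so the branch you select, $p_3>b_1$, is impossible — it contradicts the hypotheses themselves rather than producing the desired containment. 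Likewise $v_2v_5\in E$ forces $a_5\le p_2<p_3$, so your claim $p_3<a_5$ cannot occur. The correct deductions run the other way: the edges pin $p_3$ inside $I_1$ and $I_5$, hence the non-edges $v_1v_3,v_3v_5$ force $p_1<a_3$ and $b_3<p_5$, and then $v_1v_2\in E$ (resp.\ $v_4v_5\in E$) gives $a_2\le p_1<a_3\le b_3<p_5\le b_2$ (resp.\ $a_4\le p_1<a_3\le b_3<p_5\le b_4$), i.e.\ $I_3\subsetneq I_2$ (resp.\ $I_3\subsetneq I_4$). This inversion matters because the entire engine for the harder conditions is deciding, for each non-edge, which distinguished point is excluded from which interval: the edges already present force one side of the disjunction, and you picked the impossible side.

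Beyond this, conditions (3) and (4) are left at the level of a plan rather than carried out. For (3) your intermediate facts $a_2>p_1$ and $b_4<p_5$ are the right ones (they follow because $p_2,p_4\in I_3$ via the edges $v_1v_3,v_3v_5$), but the decisive step — that $v_2v_4\notin E$ then forces $b_2<p_4$ or $p_2<a_4$, and each branch places $I_2$ or $I_4$ properly inside $I_3$ — is not written down. For (4) you only name the starting dichotomy $p_2<a_5$ or $b_2<p_5$; the actual containments ($I_5\subsetneq I_k$ in the first case, using $a_k\le p_2$ and $p_6\le b_k$ together with $a_5\le p_j$ and $b_5<p_6$; and $I_2\subsetneq I_j$ in the second, using $a_j\le p_1<a_2$ and $p_5\le b_j$) are absent, and no case analysis over $j,k$ is needed once these chains are written. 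So the proposal identifies the correct method, but as written it proves only condition (1); condition (2) as sketched would fail, and (3)–(4) remain unverified.
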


\begin{proof}
Let $G$ be a proper MPTG with representation 
$\{(I_{i},p_{i})|i\in V\}$ where $I_{i}=[a_{i},b_{i}]$ be an interval on real line and $p_{i}$ is a point within it. We arrange the vertices of $V$ according to increasing order of $p_{i}$'s. Then the following holds.

\noindent In condition $(1)$ as $v_{1}v_{3}\in E$, $a_{3}\leq p_{1}<p_{2}<p_{3}\leq b_{1}$ which imply $p_{2}\in [p_{1},p_{3}]\subseteq I_{1}, I_{3}$. Now if $v_{1}v_{2},v_{2}v_{3}\notin E$, $p_{1},p_{3}\notin I_{2}$ which imply $p_{1}<a_{2}$ and $b_{2}<p_{3}$ as $p_{1}<p_{2}<p_{3}$. Hence $a_{3}\leq p_{1}<a_{2}<b_{2}<p_{3}$ which imply $[a_{2},b_{2}]\subset [a_{3},p_{3}]\subseteq I_{3}$ which contradicts that the intervals are proper. Hence the result follows. 
\vspace{0.1em}

\noindent Next in condition $(2)$ as $v_{1}v_{4}\in E$, $a_{4}\leq p_{1}<p_{2}<p_{3}<p_{4}\leq b_{1}$, which imply $p_{2},p_{3}\in I_{1},I_{4}$. Again as $v_{2}v_{5}\in E$, $a_{5}\leq p_{2}<p_{3}<p_{4}<p_{5}<b_{2}$ which imply $p_{3},p_{4}\in I_{2},I_{5}$. Now if $v_{1}v_{3}, v_{3}v_{5}\notin E$, $p_{1},p_{5}\notin I_{3}$ which imply $p_{1}<a_{3}$ and $b_{3}<p_{5}$ as $p_{1}<p_{3}<p_{5}$. If $v_{1}v_{2}\in E$ then $a_{2}\leq p_{1}<a_{3}<b_{3}<p_{5}<b_{2}$ which imply $[a_{3},b_{3}]\subset[a_{2},b_{2}]$ which is a contradiction as the intervals are proper. Now if $v_{4}v_{5}\in E$ then $a_{4}\leq p_{1}<a_{3}<b_{3}<p_{5}\leq b_{4}$ imply $[a_{3},b_{3}]\subsetneq [a_{4},b_{4}]$ which is again a contradiction as the intervals are proper. Thus the condition holds true.
\vspace{0.1em}

\noindent We note in condition $(3)$, $v_{1}v_{3}\in E$ imply $a_{3}\leq p_{1}<p_{2}<p_{3}\leq b_{1}$. Therefore $p_{2}\in I_{1}\cap I_{3}$. Now if $v_{1}v_{2}\notin E$, then $a_{2}>p_{1}$ as $p_{1}<p_{2}$. Now as $v_{3}v_{5}\in E$, $a_{5}\leq p_{3}<p_{4}<p_{5}\leq b_{3}$. This imply $p_{4}\in I_{3}\cap I_{5}$. Now if $v_{4}v_{5}\notin E$, then $b_{4}<p_{5}$ as $p_{4}<p_{5}$. Now as $p_{2},p_{4}\in I_{3}$ if $v_{2}v_{4}\notin E$ imply either $b_{2}<p_{4}$ or $p_{2}<a_{4}$ as $p_{2}<p_{3}<p_{4}$. If $b_{2}<p_{4}$ then $a_{3}\leq p_{1}<a_{2}<b_{2}<p_{4}<p_{5}\leq b_{3}$ which imply $[a_{2},b_{2}]\subset [a_{3},b_{3}]$ which is a contradiction as the intervals are proper. Again if $p_{2}<a_{4}$ then $a_{3}\leq p_{1}<p_{2}<a_{4}<b_{4}<p_{5}\leq b_{3}$ which imply $[a_{4},b_{4}]\subsetneq [a_{3},b_{3}]$ which is again a contradiction as the intervals are proper. Hence the proof holds. 
\vspace{0.1em}

\noindent Again in condition $(4)$ as $v_{1}v_{j}\in E$, $a_{j}\leq p_{1}<p_{2}<p_{j}\leq b_{1}$, which imply $p_{2}\in I_{1},I_{j}$. Now if $v_{1}v_{2}\notin E$, then $a_{2}>p_{1}$ as $p_{1}<p_{2}$. Again as $v_{k}v_{6}\in E$, $a_{6}\leq p_{k}<p_{5}<p_{6}\leq b_{k}$, which imply $p_{5}\in I_{k}, I_{6}$. Now if $v_{5}v_{6}\notin E$, then $b_{5}<p_{6}$ as $p_{5}<p_{6}$. If $v_{2}v_{5}\notin E$ then either $p_{2}<a_{5}$ or $b_{2}<p_{5}$ as $p_{2}<p_{5}$. Now if $p_{2}<a_{5}$ then $a_{k}\leq p_{2}<a_{5}\leq p_{j}<p_{5}<b_{5}<p_{6}\leq b_{k}$ as $v_{2}v_{k}, v_{j}v_{5}\in E$, which imply $[a_{5},b_{5}]\subsetneq [a_{k},b_{k}]$ which is a contradiction as the intervals are proper. Again if $b_{2}<p_{5}$ then $a_{j}\leq p_{1}<a_{2}<b_{2}<p_{5}\leq b_{j}$ as $v_{j}v_{5}\in E$, which imply $[a_{2},b_{2}]\subsetneq [a_{j},b_{j}]$ which again contradicts that the intervals are proper. Hence the result follows.
\end{proof}

\begin{defn}\label{pmptgordering}
Let $G=(V,E)$ be an undirected graph and the vertices of $G$ satisfy condition \ref{4pt} of Theorem \ref{mptg1} and all the conditions of Lemma \ref{lm1} with respect to a vertex ordering of $V$. Then we call such an ordering as {\em proper MPTG ordering} of $V$. Let $\sigma$ be a proper MPTG ordering and $A^{*}(G)=(a_{i,j})$ be the augmented adjacency matrix of $G$.
From Theorem \ref{mptg1} it is clear that $G$ is an MPTG with respect to $\sigma$. Let $\{(I_{i},p_{i})|i\in V\}$ be a MPTG representation of $G$. Below we define a order relation $\prec$ among the right end points of each interval $I_{i}=[a_{i},b_{i}]$ associated to every vertex (say $v_{i}$) of $G$ when vertices are arranged in $\sigma$ order in $A^{*}(G)$ along both rows and columns of it. 

\noindent Let $R_{1},R_{2}$ be two partial order relations defined on the set $\{b_{i}| i\in V\}$. For $i<j$ in $A^{*}(G)$ we define,
\begin{eqnarray}\label{1}
 b_{j} R_{1} b_{i} \hspace*{0.7 em} \mbox{if there exists some} \hspace*{0.7 em} k_{2}>j \hspace*{0.7 em} \mbox{such that} \hspace*{0.7 em} a_{j,k_{2}}=0 \footnotemark \hspace*{0.7 em} \mbox{and} \hspace*{0.7 em} a_{i,k_{2}}=1 
\end{eqnarray}\footnotetext{ This zero is right open from condition $4$ of Theorem \ref{mptg1}}
 \begin{eqnarray}\label{2}
b_{j}R_{2}b_{i} \hspace*{0.7 em} \mbox{if there exists some} \hspace*{0.7 em} k_{1}<i \hspace*{0.7 em}\mbox{such that} \hspace*{0.7 em} a_{k_{1},i}=0\footnotemark \hspace*{0.7 em} \mbox{and} \hspace*{0.7 em} a_{k_{1},j}=1. 
 \end{eqnarray}\footnotetext{ This zero is up open from condition $4$ of Theorem \ref{mptg1}}
 
\noindent Now we define a order $`\prec$' between $b_{i}$'s in the following way. For $i<j$, 
\begin{eqnarray}\label{3}
b_{j}\prec b_{i}\hspace*{0.5 em} \mbox{if} \hspace*{0.5 em}b_{j} R_{1} b_{i} \hspace*{0.5 em}\mbox{or}\hspace*{0.5 em} b_{j} R_{2} b_{i} \hspace*{0.5 em}\mbox{or}\hspace*{0.5 em} b_{j} R_{1} b_{k} \hspace*{0.5 em}\mbox{and} \hspace*{0.5 em} b_{k} R_{2} b_{i} \hspace*{0.5 em}\mbox{for some} \hspace*{0.5 em}k\in V \hspace*{0.5 em}\mbox{satisfying}\hspace*{0.5 em} i<k<j.
\end{eqnarray}

\begin{eqnarray}\label{4}
b_{i}\prec b_{j} \hspace*{0.7 em}\mbox{otherwise}.
\end{eqnarray}

\noindent It is important to note for $i<j$, $b_{i}\prec b_{j}$ whenever $a_{i,j}=0$. If not let $b_{j}\prec b_{i}$, then from the above definition of $\prec$,
one of the following holds, $b_{j}R_{1} b_{i}$, $b_{j} R_{2} b_{i}$ or 
$b_{j}R_{1} b_{k}$ and $b_{k} R_{2} b_{i}$ for some $k\in V$ such that $i<k<j$. In first two cases condition \ref{3pt} of Lemma \ref{lm1} and in last case condition \ref{5pt2} of Lemma \ref{lm1} gets contradicted.
\end{defn}

\noindent In the following Lemma we will prove $\prec$ to be a total order.  

\begin{lem}\label{lm2}
Let $G=(V,E)$ be a MPTG satisfying conditions Lemma \ref{lm1} with respect to an ordering of vertices of $V$. Then $\prec$ as described in Definition \ref{pmptgordering} is a total order.
\end{lem}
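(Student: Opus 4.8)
The plan is to verify the three defining axioms of a total order for $\prec$ on the set $\{b_i \mid i \in V\}$: irreflexivity/antisymmetry, comparability (totality), and transitivity. Comparability is immediate from the definition, since \eqref{3} and \eqref{4} together cover every pair: for $i < j$ we are told that $b_j \prec b_i$ holds precisely when one of the three relational conditions (namely $b_j R_1 b_i$, or $b_j R_2 b_i$, or $b_j R_1 b_k$ and $b_k R_2 b_i$ for some $i < k < j$) is satisfied, and $b_i \prec b_j$ otherwise. So exactly one of $b_i \prec b_j$, $b_j \prec b_i$ is declared to hold for each pair, which simultaneously handles antisymmetry and irreflexivity \emph{provided} the defining conditions are well-posed — i.e.\ we never have both $b_j \prec b_i$ and $b_i \prec b_j$ forced by independent considerations. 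Since the ``otherwise'' clause in \eqref{4} is the logical negation of the disjunction in \eqref{3}, this is automatic.

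The substance of the lemma is transitivity: if $b_x \prec b_y$ and $b_y \prec b_z$, then $b_x \prec b_z$. First I would reduce to understanding how $\prec$ behaves relative to the underlying linear order $<$ on indices. The key structural fact to establish is a ``monotonicity'' statement: if $i < j$ and $a_{i,j} = 1$ (the intervals of $v_i, v_j$ overlap), I expect $\prec$ to be governed by the $R_1, R_2$ relations, whereas if $a_{i,j} = 0$ then necessarily $b_i \prec b_j$ — this last fact is already proved at the end of Definition \ref{pmptgordering} using conditions \ref{3pt} and \ref{5pt2} of Lemma \ref{lm1}. So the real work is the case analysis when consecutive overlaps occur, and here the $4$-point condition \eqref{4p1} and the $5$-point and $6$-point conditions of Lemma \ref{lm1} are exactly the combinatorial ammunition designed to make the chains of $R_1$/$R_2$ witnesses compose. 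Concretely, for transitivity I would split into cases according to the relative $<$-order of $x, y, z$ (six permutations, reducible by symmetry considerations to a few genuinely distinct ones) and, within each, according to which of the clauses of \eqref{3} certifies $b_x \prec b_y$ and $b_y \prec b_z$. In a typical case one has, say, $x < y < z$ with witnesses $k_2 > y$ for $b_x R_1 b_y$ (so $a_{y,k_2}=0$, $a_{x,k_2}=1$) and $k_1 < y$ for $b_y R_2 b_z$; one then wants to produce a single witness certifying $b_x \prec b_z$, and the $5$-/$6$-point conditions, applied to the six indices $x, y, z, k_1, k_2$ arranged in their $<$-order, are what let us either transport a witness or contradict a spurious ``$1$'' entry that would otherwise obstruct the conclusion.

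The main obstacle will be the bookkeeping in the transitivity argument: there are many sub-cases depending on whether each of the two given $\prec$-relations is witnessed by $R_1$, by $R_2$, or by an $R_1$-then-$R_2$ composite through some intermediate index, and on how the witness indices $k_1, k_2$ interleave with $x, y, z$ in the linear order. The conceptual point — and what I would emphasize to keep the proof readable — is that each of the four conditions in Lemma \ref{lm1} was tailored to close off exactly one family of these sub-cases: the $3$-point condition forbids an isolated ``$0$'' flanked by ``$1$''s, the two $5$-point conditions handle a single $R_1$ or $R_2$ step abutting a length-two obstruction, and the $6$-point condition handles the genuinely composite case where both a right-open and an up-open zero are in play. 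Once the correspondence between sub-cases and conditions is laid out, each individual sub-case reduces to writing down four or five chained inequalities among interval endpoints (as in the proofs of Lemmas \ref{atfree2}, \ref{Cn} and \ref{lm1}) and reading off the required witness or the required contradiction with properness. I would present one representative sub-case in full and indicate that the remaining ones are symmetric or strictly easier.
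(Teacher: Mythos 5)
Your overall strategy matches the paper's: comparability and antisymmetry are immediate from the dichotomy in (\ref{3})--(\ref{4}), so the whole content is transitivity, which the paper also proves by splitting on the relative position of the three indices and on which clause of (\ref{3}) (an $R_1$ witness, an $R_2$ witness, or an $R_1$-then-$R_2$ composite) certifies each of the two given relations, then invoking (\ref{4p1}) and the conditions of Lemma \ref{lm1} to transport a witness or contradict an entry of $A^{*}(G)$. However, as written your proposal has two genuine gaps. First, the logical setting: in Lemma \ref{lm2} the graph is only assumed to be an MPTG whose ordering satisfies the \emph{combinatorial} conditions of Lemma \ref{lm1}; no proper representation is available (producing one is exactly the point of the converse of Theorem \ref{pmptg1}, which relies on this lemma). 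So your plan to finish each sub-case by ``writing down four or five chained inequalities among interval endpoints \ldots and reading off \ldots the required contradiction with properness'' is not available and would be circular; the argument must stay at the level of matrix entries, the $R_1/R_2$ witnesses $k_1,k_2,k'$, and the $3$-, $5$-, $6$-point conditions used as axioms, which is how the paper proceeds.

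Second, the claim that one representative sub-case suffices because ``the remaining ones are symmetric or strictly easier'' is not justified and, in this setting, false in spirit: the default direction of $\prec$ is tied to the fixed index order (for $i<j$ one has $b_i\prec b_j$ unless a witness exists), so the six arrangements of $i_1,i_2,i_3$ are structurally different — some are proved directly by producing a witness, others by assuming $b_{i_3}\prec b_{i_1}$ and deriving a contradiction — and the admissible locations of the witnesses ($k_1<i_1$ versus $k_1<i_3$, $k_2>i_2$ versus $k_2>i_3$, etc.) must be pinned down case by case with separate preliminary observations (compare the paper's Case (i) with Cases (v)--(vi) in the appendix, which need additional observations not used elsewhere). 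A complete proof has to carry out this bookkeeping for all cases; as it stands your proposal describes the right skeleton but omits essentially all of the argument that makes the lemma true.
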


\begin{proof}
Let $G$ satisfies conditions of Lemma \ref{lm1} with respect to a ordering (say $\sigma$) of vertices of $V$ having MPTG representation $\{(I_{v},p_{v})|v\in V\}$ where $I_{v}=[a_{v},b_{v}]$. Let the augmented adjacency matrix of $G$ (say $A^{*}(G)=(a_{i,j}))$ is arranged according to $\sigma$ along both rows and columns. Now to show $\prec$ is a total order it is sufficient to prove $`\prec$' is transitive. Let $b_{i_{1}}\prec b_{i_{2}}$ and $b_{i_{2}}\prec b_{i_{3}}$. Below we will show $b_{i_{1}}\prec b_{i_{3}}$. Several cases may arise depending on occurrence of $i_{1},i_{2},i_{3}$ in $\sigma$. 

\vspace{0.3em}
\noindent \textbf{Case (i) }\  $\bf{i_{1}<i_{3}<i_{2}}$
 
\noindent We assume on contrary $b_{i_{3}}\prec b_{i_{1}}$. We use the following observations repeatedly for the rest of the proof.

\noindent \textit{Observations:}
\begin{enumerate}
\item First we note $a_{i_{3},i_{2}}=1$ as $i_{3}<i_{2}$. 
\item Next $b_{i_{3}} R_{1} b_{j}$ for any $j<i_{3}$ imply existence of some $\bf{k_{2}^{'}}>i_{3}$ such that $a_{i_{3},k_{2}^{'}}=0,a_{j,k_{2}^{'}}=1$. Moreover $k_{2}^{'}>i_{2}$ otherwise (\ref{4p1}) gets contradicted for vertices $\{j,i_{3},k_{2}^{'},i_{2}\}$.
\item Next we see $b_{i_{2}} R_{1} b_{r}$ for any $r<i_{2}$ imply existence of some $\bf{k_{2}}>i_{2}$ such that $a_{i_{2},k_{2}}=0, a_{r,k_{2}}=1$. Note that if $b_{i_{3}} R_{1} b_{j}$ for any $j<i_{3}$ then if $k_{2}^{'}>k_{2}$,  $a_{i_{2},k_{2}^{'}}$ becomes zero as $a_{i_{2},k_{2}}=0$ is right open, which helps us to conclude $b_{i_{2}} R_{1} b_{j}$ from (\ref{1}).
\item  Similarly $b_{l} R_{2} b_{i_{1}}$ for any $l>i_{1}$ ensure existence of some $\bf{k_{1}^{'}}<i_{1}$ such that $a_{k_{1}^{'},i_{1}}=0,a_{k_{1}^{'},l}=1$.
\item Again  $b_{m} R_{2} b_{i_{3}}$ for any $m>i_{3}$ ensure exist of some $\bf{k_{1}}<i_{3}$ satisfying $a_{k_{1},i_{3}}=0,a_{k_{1},m}=1$. We note that $k_{1}<i_{1}$ otherwise as $a_{k_{1},i_{3}}=0$ is up open, $a_{i_{1},i_{3}}$ becomes zero which imply $b_{i_{1}}\prec b_{i_{3}}$ which is a contradiction. Now if $b_{l} R_{2} b_{i_{1}}$ holds for any $l>i_{1}$ then if $k_{1}<k_{1}^{'}<i_{1}$, $a_{k_{1},i_{1}}$ becomes zero as $a_{k_{1}^{'},i_{1}}=0$ is upopen, which imply $b_{m} R_{2} b_{i_{1}}$ from (\ref{2}).
\end{enumerate}

\noindent\textit{Main proof:}
Let $b_{i_{2}}R_{1} b_{i_{3}}$. Now if $b_{i_{3}} R_{1} b_{j}$ for any $j<i_{3}$ then we get $k_{2}^{'}$ must be greater than $k_{2}$ otherwise (\ref{4p1}) gets contradicted for vertices $\{j,i_{3},k_{2}^{'},k_{2}\}$. Hence from observation $3$ we get $b_{i_{2}} R_{1} b_{j}$. 

\vspace{0.1em}
\noindent Now if $b_{i_{3}} R_{1} b_{i_{1}}$ we get $b_{i_{2}}\prec b_{i_{1}}$ from last paragraph. Again if $b_{i_{3}}R_{2}b_{i_{1}}$ then $b_{i_{2}}\prec b_{i_{1}}$ follows from (\ref{3}) as $b_{i_{2}} R_{1}b_{i_{3}}$ and $i_{1}<i_{3}<i_{2}$.
 Next if $b_{i_{3}} R_{1} b_{k}$ and $b_{k} R_{2} b_{i_{1}}$ for some $k\in V$ such that $i_{1}<k<i_{3}$, we get $b_{i_{2}} R_{1} b_{k}$ from last paragraph.  Hence from (\ref{3}) we get $b_{i_{2}}R_{1} b_{i_{1}}$ as $b_{k} R_{2} b_{i_{1}}$ and $i_{1}<k<i_{2}$. Thus all the above cases contradicts our assumption.

\vspace{0.2em}

\noindent Let $b_{i_{2}} R_{2} b_{i_{3}}$. If $b_{i_{3}} R_{1}b_{i_{1}}$ then $k_{2}^{'}>i_{2}$ from observation $2$. Hence $a_{i_{2},k_{2}^{'}}=1$ otherwise $b_{i_{2}} R_{1} b_{i_{1}}$ from (\ref{1}) imply $b_{i_{2}}\prec b_{i_{1}}$ which is not true. Hence condition \ref{5pt1} of Lemma \ref{lm1} gets contradicted for vertices $\{k_{1},i_{1},i_{3},i_{2},k_{2}^{'}\}$.
Again if $b_{i_{3}} R_{2} b_{i_{1}}$ then from observation $5$ if $k_{1}<k_{1}^{'}<i_{1}$ we get $b_{i_{2}}\prec b_{i_{1}}$ which is not true.
Again $k_{1}^{'}\neq k_{1}$ clearly. Hence $k_{1}^{'}<k_{1}$. But again (\ref{4p1}) gets contradicted for vertices $\{k_{1}^{'},k_{1},i_{3},i_{2}\}$. Now if $b_{i_{3}} R_{1} b_{k}$ and $b_{k} R_{2} b_{i_{1}}$ for some $k\in V$ such that $i_{1}<k<i_{3}$. Then $a_{i_{2},k_{2}^{'}}=1$ otherwise we get $b_{i_{2}} R_{1} b_{k}$ which imply $b_{i_{2}}\prec b_{i_{1}}$ as $b_{k} R_{2} b_{i_{1}}$ and $i_{1}<k<i_{2}$. But we get a contradiction applying condition \ref{5pt1} of Lemma \ref{lm1} on vertices $\{k_{1},k,i_{3},i_{2},k_{2}^{'}\}$. 

\vspace{0.2em}

\noindent Let $b_{i_{2}} R_{1} b_{k}$ and $b_{k} R_{2} b_{i_{3}}$ for some $k\in V$ such that $i_{3}<k<i_{2}$.  Now if $b_{i_{3}} R_{1} b_{j}$  for any $j<i_{3}$ then if $k_{2}^{'}>k_{2}$, from observation $3$ we get $b_{i_{2}} R_{1} b_{j}$. 

\vspace{0.1em}
\noindent Now if $b_{i_{3}} R_{1} b_{i_{1}}$ then if $k_{2}^{'}>k_{2}$ from last paragraph we get $b_{i_{2}}\prec b_{i_{1}}$ which is not true. Hence $i_{2}<k_{2}^{'}<k_{2}$. Again $a_{k,k_{2}^{'}}=1$ from (\ref{4p1}) applying on vertices $\{i_{1},k,k_{2}^{'},k_{2}\}$. Hence condition \ref{5pt1} of Lemma \ref{lm1} gets contradicted for vertices $\{k_{1},i_{1},i_{3},k,k_{2}^{'}\}$. Next if $b_{i_{3}} R_{2} b_{i_{1}}$ then if $k_{1}<k_{1}^{'}<i_{1}$ then from observation $5$ we get $b_{k} R_{2} b_{i_{1}}$ and hence from (\ref{3}) we find $b_{i_{2}}\prec b_{i_{1}}$ as $b_{i_{2}} R_{1} b_{k}$ and $i_{1}<k<i_{2}$. Again when $k_{1}^{'}<k_{1}$, (\ref{4p1}) gets contradicted for vertices $\{k_{1}^{'},k_{1},i_{3},k\}$. Now let $b_{i_{3}} R_{1} b_{k^{'}}$ and $b_{k^{'}} R_{2} b_{i_{1}}$  for some $k^{'}\in V$ satisfying $i_{1}<k^{'}<i_{3}$. As $b_{i_{3}} R_{1} b_{k^{'}}$ then if $k_{2}^{'}>k_{2}$ then from last paragraph we get $b_{i_{2}} R_{1} b_{k^{'}}$ which helps us to conclude from (\ref{3}) $b_{i_{2}}\prec b_{i_{1}}$ as $b_{k^{'}} R_{2} b_{i_{1}}$.
Hence $i_{2}<k_{2}^{'}<k_{2}$. Thus in this case condition \ref{5pt1} of Lemma (\ref{lm1}) gets contradicted for vertices $\{k_{1},k^{'},i_{3},k,k_{2}^{'}\}$ (note $a_{k,k_{2}^{'}}=1$ otherwise (\ref{4p1}) gets contradicted for vertices $\{k^{'},k,k_{2}^{'},k_{2}\}$).

\vspace{0.3em}
\noindent The verification of other cases are also long and rigorous. Therefore we put them in Appendix.
\end{proof}

\begin{defn}\label{canseq}
Let $G=(V,E)$ be a MPTG having representation $\{([a_{v},b_{v}],p_{v})|v\in V\}$ satisfying a proper MPTG ordering $\sigma$ of $V$ as described in 
Definition \ref{pmptgordering}. Then from Lemma \ref{lm2} it is clear that observing the adjacencies of vertices in $A^{*}(G)$ one can find a total order $\prec$ between all $b_{v}$'s in such a way so that they can be put together in a single sequence $P_{1}$ (say). Next we arrange $a_{i}$'s in a single sequence $P_{2}$ (say) in the same order. Let $|V|=n$. Then

\vspace{0.3em}

\hspace{4.3em} $P_{1}: b_{\alpha_{1}}\prec \hspace*{.2em} b_{\alpha_{2}}\prec\hdots\prec b_{\alpha_{n}}$ and $P_{2}: a_{\alpha_{1}}\prec \hspace*{.2em} a_{\alpha_{2}}\prec\hdots\prec a_{\alpha_{n}}$ where $1\leq \alpha_{i}\leq n$.

\vspace{0.3em}

\noindent We now combine $a_{i},b_{i},p_{i}$'s in a single sequence $P$ by following rule. Henceforth we call $P$ by {\em canonical sequence} corresponding to $\sigma$.

\begin{enumerate}
\item  First we place all $p_{i}$'s on real line according to the occurrence of $i$'s in $\sigma$. 

\item  Now using induction starting from the last element of $P_{2}$ until the first element is reached we place $a_{i}$'s on real line in following way.

\vspace{0.2em} 
We first place $a_{\alpha_{n}}$ between $p_{(\alpha_{n})_{1}-1}$ and $p_{(\alpha_{n})_{1}}$. 
Now for $a_{\alpha_{k}}$ if $(\alpha_{k})_{1}\geq (\alpha_{k+1})_{1}$ then we place $a_{\alpha_{k}}$ just before $a_{\alpha_{k+1}}$, place $a_{\alpha_{k}}$ between $p_{{(\alpha_{k})}_{1}-1}$ and $p_{{(\alpha_{k})}_{1}}$ otherwise,
where $(\alpha_{i})_{1}$ is the first column containing one in $\alpha_{i}$ th row in $A^{*}(G)$.

\item Again using induction starting from first element of $P_{1}$ until the last element is reached we place $b_{i}$'s on real line by following rule.

First we place $b_{\alpha_{1}}$ between $p_{(\alpha_{1})_{2}}$ and $p_{(\alpha_{1})_{2}+1}$. Now for $b_{\alpha_{k}}$ if 
 $(\alpha_{k})_{2}\leq (\alpha_{k-1})_{2}$ then we place $b_{\alpha_{k}}$ just after $b_{\alpha_{k-1}}$, place $b_{\alpha_{k}}$ between $p_{{(\alpha_{k})}_{2}}$ and $p_{{(\alpha_{k})}_{2}+1}$ otherwise
where $(\alpha_{i})_{2}$ is the last column containing one in $\alpha_{i}$ th row in $A^{*}(G)$. 
\end{enumerate}

\noindent From construction of $P$ one can verify that if there are more than one $b_{i}(a_{i})$'s occur between two $p_{i}$'s then they are arranged according to their occurrence in $P_{1}(P_{2})$ respectively.
Again it is easy to check that the order of the sequences $P_{1},P_{2}$ get preserved in $P$. 
\end{defn}

\begin{exmp}\label{ex1}
Consider the MPTG $G=(V,E)$ in Figure \ref{pm1} whose augmented adjacency matrix $A^{*}(G)$ is arranged according to a proper MPTG ordering $\sigma=(v_{1},\hdots,v_{7})$ of $V$.
Let $[a_{i},b_{i}]$, $p_{i}$ be the interval and a point within it corresponding to vertex $v_{i}$ for $i=1,2,\hdots,7$ in its MPTG representation. Now looking at the adjacencies of vertices in $A^{*}(G)$ we construct $P_{1},P_{2}$ sequences of endpoints $b_{i},a_{i}$ respectively according to the construction described in Definition \ref{canseq}. 

\hspace{12em} \noindent $P_{1}: b_{2}\prec b_{1}\prec b_{5}\prec b_{6}\prec b_{4}\prec b_{3}\prec b_{7}$.

\hspace{12.3em}\noindent $P_{2}: a_{2}\prec a_{1}\prec a_{5}\prec a_{6}\prec a_{4}\prec a_{3}\prec a_{7}$.

\vspace{0.3em}
\noindent The combined sequence of $a_{i},p_{i},b_{i}$ is 
$P: a_{2}\hspace*{.3 em} a_{1} \hspace*{.3 em} a_{5} \hspace*{.3 em} a_{6} \hspace*{.3 em} a_{4} \hspace*{.3 em} p_{1} \hspace*{.3 em} a_{3} \hspace*{.3 em} p_{2} \hspace*{.3 em} p_{3} \hspace*{.3 em} b_{2} \hspace*{.3 em} a_{7} \hspace*{.3 em} p_{4} \hspace*{.3 em} b_{1} \hspace*{.3 em} p_{5} \hspace*{.3 em} b_{5} \hspace*{.3 em} p_{6} \hspace*{.3 em} b_{6} \hspace*{.3 em} p_{7} \hspace*{.3 em} b_{4} \hspace*{.3 em} b_{3} \hspace*{.3 em} b_{7}$.
\end{exmp}
\begin{figure}[t]\centering
$\begin{array}{ccccccccc}

&&6&8&9&12&14&16&18\\
&&v_{1}&v_{2}& v_{3} &v_{4} &v_{5} &v_{6} &v_{7}\\
	\cline{3-9}
	[1,13] &\multicolumn{1}{c@{\,\vline}}{v_{1}} & 1 & 1 & 0 &1 &0 & 0 & \multicolumn{1}{c@{\,\vline}}{0}\\
	
[2,10]&	\multicolumn{1}{c@{\,\vline}}{v_{2}} & 1 & 1 & 1 & 0 & 0 & 0 & \multicolumn{1}{c@{\,\vline}}{0}\\
	
[7,20] &\multicolumn{1}{c@{\,\vline}}{v_{3}} & 0 & 1 & 1 & 1 & 1 & 1 & \multicolumn{1}{c@{\,\vline}}{0} \\

[5,19] &	\multicolumn{1}{c@{\,\vline}}{v_{4}} & 1 & 0 & 1 & 1 & 1 & 1 &  \multicolumn{1}{c@{\,\vline}}{1}\\

[3,15]& \multicolumn{1}{c@{\,\vline}}{v_{5}} & 0 & 0 & 1 & 1 & 1 & 0 &  \multicolumn{1}{c@{\,\vline}}{0}\\

[4,17] &\multicolumn{1}{c@{\,\vline}}{v_{6}} & 0 & 0 & 1 & 1 & 0 & 1 &  \multicolumn{1}{c@{\,\vline}}{0}\\

[11,21] &\multicolumn{1}{c@{\,\vline}}{v_{7}} & 0 & 0 & 0 & 1 & 0 & 0 &  \multicolumn{1}{c@{\,\vline}}{1}\\	
\cline{3-9}
\end{array}
$
\caption{Augmented adjacency matrix $A^{*}(G)$ of a proper MPTG $G$}\label{pm1} 
\end{figure}

\noindent We now present the main theorem of our article which characterizes a proper max-point-tolerance graph.

\subsection{\textbf{\textit{Characterization of proper MPTG}}}

\begin{thm}\label{pmptg1}
Let $G$ be an undirected graph with vertex set $V$ and edge set $E$. Then $G$ is a proper MPTG if and only if it satisfies a proper MPTG ordering of $V$ as described in Definition \ref{pmptgordering}. 
\end{thm}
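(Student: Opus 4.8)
The plan is to prove the two directions separately. The forward direction (``proper MPTG $\Rightarrow$ satisfies a proper MPTG ordering'') is essentially already in hand: given a proper MPTG representation $\{(I_v,p_v)\mid v\in V\}$, order the vertices by increasing $p_v$. By Theorem~\ref{mptg1} this ordering satisfies the $4$-point condition~\eqref{4p1}, and by Lemma~\ref{lm1} the same ordering satisfies the $3$-point, both $5$-point, and the $6$-point conditions. Hence this ordering is a proper MPTG ordering by Definition~\ref{pmptgordering}, and the forward direction is complete with essentially no new work.

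The substance is the converse: suppose $\sigma$ is a proper MPTG ordering of $V$; I must construct an actual proper MPTG representation of $G$. The idea is to use the \emph{canonical sequence} $P$ from Definition~\ref{canseq}. Since $\sigma$ satisfies the $4$-point condition, $G$ is an MPTG with respect to $\sigma$ (Theorem~\ref{mptg1}), so a MPTG representation exists; and since $\sigma$ satisfies Lemma~\ref{lm1}, Lemma~\ref{lm2} guarantees $\prec$ is a total order, so $P$ is well-defined. I would then assign to each vertex $v_i$ the interval $I_i=[a_i,b_i]$ and point $p_i$ by reading off their relative positions in $P$ (replacing the abstract linear order $P$ by any increasing real embedding, e.g.\ integer positions). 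The proof then has two verification tasks. First, \textbf{correctness of adjacency}: for $i<j$ in $\sigma$, show $a_{i,j}=1$ (i.e.\ $v_iv_j\in E$ or $i=j$) if and only if $\{p_i,p_j\}\subseteq I_i\cap I_j$, equivalently $a_j\le p_i$ and $p_j\le b_i$ (using that $\sigma$ being a MPTG ordering forces $a_i\le a_j$-type monotonicity is \emph{not} assumed, so one must argue directly from the placement rules in Definition~\ref{canseq}). Here the placement of $a_{\alpha_k}$ ``between $p_{(\alpha_k)_1-1}$ and $p_{(\alpha_k)_1}$'' is designed exactly so that $a_i$ sits just left of the leftmost $p$ that $I_i$ must cover, and dually for $b_i$; the $0$-pattern structure of $A^*(G)$ (right-open / up-open zeros, condition~4 of Theorem~\ref{mptg1}) is what makes these placements consistent, i.e.\ guarantees that when $a_{i,j}=0$ the forced inequality $p_j>b_i$ or $a_j>p_i$ actually holds in $P$. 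Second, \textbf{properness}: show no interval $I_i$ is properly contained in another $I_j$, i.e.\ we never have $a_j<a_i$ and $b_i<b_j$ simultaneously. This is where the total order $\prec$ on right endpoints (and the parallel order on left endpoints, which by construction is the \emph{same} permutation $\alpha_1,\dots,\alpha_n$) does the work: $a_j\prec a_i$ and $b_i\prec b_j$ would force $\alpha$ to list $j$ before $i$ in $P_2$ but $i$ before $j$ in $P_1$, contradicting that both sequences use the same order; one must check the embedding into $P$ preserves these orders (stated at the end of Definition~\ref{canseq}) so that strict containment in $\Real$ is genuinely ruled out.

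I would organize the converse as: (1) invoke Theorem~\ref{mptg1} and Lemma~\ref{lm2} to get that $P$ is well-defined; (2) define the representation from $P$; (3) a lemma that for $i<j$, $b_i\prec b_j \iff$ (in $P$, $b_i$ is placed left of $b_j$) and similarly for $a$'s, plus $a_i$ is always left of $p_i$ which is left of $b_i$ --- routine from the construction; (4) the adjacency equivalence, split into the case $a_{i,j}=1$ (must show both $a_j\le p_i$ and $p_j\le b_i$ in $P$, using the definitions of $(\alpha_k)_1,(\alpha_k)_2$ as first/last columns with a $1$) and the case $a_{i,j}=0$ (must show the placement rules, together with right-open/up-open structure, push $a_j$ right of $p_i$ or $b_i$ left of $p_j$ --- this is where conditions~\eqref{3pt} and~\eqref{5pt2} re-enter, exactly as in the remark at the end of Definition~\ref{pmptgordering} showing $b_i\prec b_j$ when $a_{i,j}=0$); (5) properness, via the common-permutation argument above.

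The main obstacle will be step~(4), the $a_{i,j}=0$ case: one has to show that the greedy left-to-right / right-to-left insertion of the $a_i$'s and $b_i$'s never accidentally places $a_j\le p_i$ \emph{and} $p_j\le b_i$ for a non-edge $\{i,j\}$. This requires carefully tracking how the ``stacking'' clauses (``place $a_{\alpha_k}$ just before $a_{\alpha_{k+1}}$ if $(\alpha_k)_1\ge(\alpha_{k+1})_1$'') interact with the zero-pattern of $A^*(G)$, and is precisely the point where all four conditions of Lemma~\ref{lm1} plus the $4$-point condition get used in combination. I expect this to mirror, in a more global form, the local case analysis already carried out in the proof of Lemma~\ref{lm2}, and like that proof it may be long enough that part of it is relegated to an appendix. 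The properness check (step~5), by contrast, should be short once the order-preservation statement at the end of Definition~\ref{canseq} is established.
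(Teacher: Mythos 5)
Your proposal follows essentially the same route as the paper's proof: the forward direction by ordering vertices by $p_v$ and invoking Theorem~\ref{mptg1} and Lemma~\ref{lm1}, and the converse by building the canonical sequence $P$ (with Lemma~\ref{lm2} guaranteeing $\prec$ is total), reading the representation off integer positions in $P$, getting properness from the fact that $P_1$ and $P_2$ carry the same permutation and their orders are preserved in $P$, and verifying adjacency with the delicate case being $a_{i,j}=0$, which the paper handles exactly as you anticipate --- a long case analysis (its Claims~1 and~2 in Step~(iii), split by whether the zero is only up open or right open) combining the $3$-, $5$-, and $6$-point conditions with the $4$-point condition. The only difference is that you leave that case analysis as a plan rather than executing it, but the decomposition and the tools you identify are precisely those of the paper.
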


\begin{proof}
Let $G=(V,E)$ be a proper MPTG. Then it satisfies (\ref{4p1}) and all other conditions required for a vertex ordering to be a proper MPTG ordering follows from Theorem \ref{mptg1} and Lemma \ref{lm1}.

\vspace{0.3em}
\noindent {\em Conversely}
Let $G$ possesses a proper MPTG ordering (say $\sigma$) of $V$. Then it satisfy (\ref{4p1}) and conditions of Lemma \ref{lm1} with respect to $\sigma$. Let the augmented adjacency matrix of $G$, i.e; $A^{*}(G)=(a_{ij})$ is arranged according to $\sigma$. Then from Theorem \ref{mptg1} $G$ becomes a MPTG with respect to $\sigma$.
Let $[a_{i},b_{i}]$ be the interval and $p_{i}$ be a point within it associated to each vertex $i\in V$ in its MPTG representation.
Now from the adjacencies of the vertices in $A^{*}(G)$ we define a order relation $\prec$ among $b_{i}$'s as described in Definition \ref{pmptgordering}. Moreover in Lemma \ref{lm2} we prove $\prec$ as a total order. Next we construct sequences $P_{1}(P_{2})$ consists of $b_{i}(a_{i})$ as described in Definition \ref{canseq}. Then we construct the canonical sequence $P$ corresponding to $\sigma$, which is basically a combined sequence containing all $a_{i},b_{i},p_{i}$'s. Now we associate the numbers $1$ to $3n$ ($|V|=n$) to $P$ starting from first element of it and define $I_{i}=[\bar{a_{i}},\bar{b_{i}}]$ and $p_{i}=\bar{p_{i}}$ where 
$\bar{a_{i}},\bar{b_{i}},\bar{p_{i}}$ denote the numbers associated to $a_{i},b_{i},p_{i}$ in $P$. Below we prove $\{(I_{i},p_{i})|i\in V\}$ will actually give proper MPTG representation of $G$.
 
 \vspace{0.3em}
\noindent \textbf{Step (i)} First we verify that the intervals $I_{i}$ are proper. 

\noindent First we show $a_{i}<b_{i}$ for each $i$ in $P$. From construction of $P$ one can easily check that each $a_{i}$ is placed left to $p_{i_{1}}$ and each $b_{i}$ is placed right to $p_{i_{2}}$ where $(i)_{1},(i)_{2}$ denote columns containing first and last one in $i$ th row of $A^{*}(G)$. Now as $p_{(i)_{1}}< p_{i}< p_{(i)_{2}}$ in $P$, $a_{i}<b_{i}$.

\noindent Now as $b_{i}, a_{i}$'s are arranged in $P_{1},P_{2}$ in same order and they individually keep their orderings intact in $P$, no interval can be contained in other. Hence the intervals $\{I_{i}=[\bar{a_{i}},\bar{b_{i}}]|i\in V\}$ are proper with respect to our given representation.

\noindent \textbf{Step (ii)} Note that $a_{i}<p_{i}<b_{i}$ in $P$ as $(i)_{1}\leq i\leq (i)_{2}$ from Definition \ref{canseq}.

\vspace{0.3em}

\noindent \textbf{Step (iii)} Next we verify that $([\bar{a_{i}},\bar{b_{i}}],\bar{p_{i}})$ is the required proper MPTG representation of $G$. 
\vspace{0.1cm}

\noindent For this it is sufficient to show that 
with respect to the above representation $A^{*}(G)$ satisfies all its adjacency relations.

\vspace{0.1cm} 

\noindent {\textit{Case (i)}}  We show that $a_{i,j}=1$ imply $p_{i},p_{j}\in I_{i}\cap I_{j}$.

\noindent Let for $i<j$, $a_{i,j}=1$. If $b_{i}\prec b_{j}$ in $P_{1}$ (i.e; $a_{i}\prec a_{j}$ in $P_{2}$) then as $(j)_{1}\leq i$, $a_{j}<p_{i}$ in $P$. Again as $(i)_{2}>j$, $p_{j}<b_{i}$ in $P$.
Again if $b_{j}\prec b_{i}$ in $P_{1}$ (i.e; $a_{j}\prec a_{i}$ in $P_{2}$) then as $(j)_{2}\geq j$, $b_{j}>p_{j}$ in $P$. Again $(i)_{1}\leq i$ imply $a_{i}<p_{i}$. 
Now as the order of the elements of $P_{1},P_{2}$ remain intact in $P$ we get $p_{i},p_{j}\in I_{i}\cap I_{j}$. 
\vspace{0.1cm} 

\noindent {\textit{Case (ii)}} Next we show that $a_{i,j}=0$ imply either $p_{i}\notin I_{j}$ or $p_{j}\notin I_{i}$. 
\vspace{0.1em}

\noindent The proof will follow from the following two claims.
\vspace{0.3em}

\noindent \textit{\textbf{Claim $1$}} \textbf{For $i<j$ when $a_{i,j}=0$ is \it{\textbf{only up open}}\footnote{up open but not right open}, $p_{i}<a_{j}$ in $P$}.

\noindent Let for $i<j$, $a_{i,j}=0$ is only up open. Then there exist some $k>j$ such that $a_{i,k}=1$. From condition \ref{3pt} of Lemma \ref{lm1} $a_{j,k}=1$. Again $b_{i}\prec b_{j}$ as $i<j$ in $P_{1}$. Now as $(i)_{2}\geq k>j$, $b_{i}>p_{k}>p_{j}$. Again we note $(j)_{1}>i$. Hence to show $a_{j}>p_{i}$ in $P$ it is sufficient to prove that if there exist some $l\in V$ such that $a_{j}\prec a_{l}$ in $P_{2}$ then $(l)_{1}>i$.
\vspace{0.1em}

\noindent On contrary lets assume $a_{j}\prec a_{l}$ for some $l$ such that $(l)_{1}\leq i$. Several cases may arise depending upon the occurrence of $l$ in $A^{*}(G)$. Note that $l$ can not be equal to $i$ as $a_{l}\prec a_{j}$ in $P_{2}$ then contradicts our assumption.
Again $l\neq k$ otherwise $b_{l} R_{2} b_{j}$ from (\ref{2})
imply $b_{l}\prec b_{j}$ in $P_{1}$ and hence we get $a_{l}\prec a_{j}$ in $P_{2}$ which introduces contradiction.
 Next if $l<i$ then $a_{l,j}$ becomes zero as $a_{i,j}=0$ is up open, which imply $b_{l}\prec b_{j}$ in $P_{1}$ and thus we get $a_{l}\prec a_{j}$ in $P_{2}$ which again contradicts our assumption.

\vspace{0.1em}

\noindent Now we consider the case when $i<l<j$. We need the to use the following observations to find contradiction in this case. 

\noindent\textit{Observations:} 
\begin{enumerate}
\item As $(l)_{1}\leq i$, $a_{i,l}$ becomes one from (\ref{4p1}) applying on vertices $\{(l)_{1},i,l,k\}$. 

\item If $b_{j} R_{1} b_{r}$ for any $r<j$ then there exist some $\bf{k_{2}}>j$ such that $a_{j,k_{2}}=0, a_{r,k_{2}}=1$.  We note that $k_{2}>k$ as otherwise (\ref{4p1}) get contradicted for vertices $\{r,j,k_{2},k\}$. 

\item Again if $b_{m} R_{2} b_{l}$ for any $m>l$ then there exist some $\bf{k_{1}}<l$ such that $a_{k_{1},l}=0, a_{k_{1},m}=1$. If $(l)_{1}<k_{1}<l$ then $\{(l)_{1},k_{1},l,m\}$ contradict (\ref{4p1}). Hence $k_{1}<(l)_{1}\leq i$.
\end{enumerate}

\noindent \textit{Main proof:} Now if $b_{j} R_{1} b_{l}$ then from observation $2$ we get 
$k_{2}>k$. One can verify now that condition \ref{5pt1} of Lemma \ref{lm1} get contradicted for vertices $\{i,l,j,k,k_{2}\}$ using observation $1$. Again if $b_{j} R_{2} b_{l}$ then $k_{1}<(l)_{1}\leq i$ from observation $3$. But in this case also (\ref{4p1}) gets contradicted for vertices $\{k_{1},i,j,k\}$.
Now if $b_{j} R_{1} b_{k^{'}}$ and $b_{k^{'}} R_{2} b_{l}$ for some $k^{'}\in V$ such that $l<k^{'}<j$ then again $k_{2}$ must be greater than $k$ and $k_{1}<(l)_{1}\leq i$ from observations $2,3$. This imply $a_{i,k^{'}}=1$ applying 
 $(\ref{4p1})$ on vertex set $\{k_{1},i,k^{'},k\}$
But in this case condition \ref{5pt1} of Lemma \ref{lm1} gets contradicted for vertices $\{i,k^{'},j,k,k_{2}\}$.

\vspace{0.2em}

\noindent Now if $l>j$ then if $(l)_{1}<i$, $a_{(l)_{1},j}=0$ as $a_{i,j}=0$ is up open. This imply $b_{l} R_{2} b_{j}$ from (\ref{2}) i.e; $b_{l}\prec b_{j}$. Thus we get $a_{l}\prec a_{j}$, which is a contradiction. Again for $(l)_{1}=i$ we get similar contradiction. 

\vspace{0.1cm} 
\noindent Hence for any $l$, $a_{j}\prec a_{l}$ in $P_{2}$ imply $(l)_{1}>i$. Thus from our above claim $p_{i}<a_{j}$ in $P$ is established and therefore $p_{i}\notin I_{j}$.

\vspace{0.3em}
\noindent \textit{\textbf{Claim $2$}} \textbf{For $i<j$ when $a_{i,j}=0$ is right open then either $p_{i}<a_{j}$ or $b_{i}<p_{j}$ in $P$.}

\noindent Let for $i<j$, $a_{i,j}=0$ is right open. Then $b_{i}\prec b_{j}$ in $P_{1}$. As $(i)_{2}<j$ it is sufficient to prove that either for all $r\in V$ satisfying $b_{r}\prec b_{i}$ in $P_{1}$, $(r)_{2}<j$ which imply $b_{i}<p_{j}$ or for all $l\in V$ such that $a_{j}\prec a_{l}$ in $P_{2}$, $(l)_{1}>i$ which imply $p_{i}<a_{j}$. 

\vspace{0.1em}

\noindent We assume on contrary existence of some $r$ satisfying $b_{r}\prec b_{i}$ and $(r)_{2}\geq j$. Note that $r$ can not be greater than $j$ as in that case $a_{i,r}$ becomes zero due to the fact $a_{i,j}=0$ is right open, which imply $b_{i}\prec b_{r}$ which contradicts our assumption.
Again when $r<i$, as $(r)_{2}\geq j$ and $a_{i,j}=0$ is right open, $a_{i,(r)_{2}}$ becomes zero, which imply $b_{i} R_{1} b_{r}$ from (\ref{1}) and hence we get $b_{i}\prec b_{r}$ which again contradicts our assumption. Similar contradiction will occur when $r=j$. Hence $i<r<j$.

\vspace{0.2 em}

\noindent \textit{Case (i)} Now if $b_{r} R_{1} b_{i}$, then there exist $k_{2}>r$ such that $a_{r,k_{2}}=0, a_{i,k_{2}}=1$. Note that $k_{2}$ can not occur right to $j$ in $A^{*}(G)$ as $a_{i,j}=0$ is right open. Hence $r<k_{2}<j$.
Now one can find $a_{r,j}=0$ as $a_{r,k_{2}}=0$ is right open. Hence $(r)_{2}>j$, which again contradict  (\ref{4p1}) for vertices $\{i,r,k_{2},(r)_{2}\}$.

\noindent Thus for any $r\in V$ if $b_{r} R_{1} b_{i}$ holds then we get $(r)_{2}<j$. Hence one can conclude $b_{i}<p_{j}$ as $(i)_{2}<j$ in this case.
 
\vspace{0.3 em}

\noindent \textit{Case (ii)} Again if $b_{r} R_{2} b_{i}$ then there exist some $k_{1}<i$ such that $a_{k_{1},i}=0, a_{k_{1},r}=1$. From condition \ref{3pt} of Lemma \ref{lm1}, $a_{i,r}=1$. We will show in this case $p_{i}<a_{j}$. For this we assume on contrary $a_{j}\prec a_{l}$ for some $l\in V$ such that $(l)_{1}\leq i$. Using Lemma \ref{lm2} we get
\begin{equation}\label{e1}
b_{r}\prec b_{i}\prec b_{j}\prec b_{l}
\end{equation}

\noindent In the following paragraph we list some important observations  which we vastly use in rest of the proof. 

\vspace{0.3 em}
\noindent \textit{Observations:} 
\begin{enumerate}
\item For $l<j$, $a_{l,j}=1$, for $l<r$, $a_{l,r}=1$ and for $i<r$, $a_{i,r}=1$ otherwise (\ref{e1}) gets contradicted.

\item For $i<l$, $k_{1}<(l)_{1}\leq i$. If $(l)_{1}<k_{1}$ then $a_{(l)_{1},i}$ becomes zero as $a_{k_{1},i}=0$ is up open which imply $b_{l} R_{2} b_{i}$ from (\ref{2}), i.e; $b_{l}\prec b_{i}$ which is not true from (\ref{e1}). Hence $k_{1}<(l)_{1}\leq i$. 

\item Next we show $a_{r,j}=1$. We assume on contrary that $a_{r,j}=0$ then $(r)_{2}>j$.

\noindent When $l<r$ following observation $1$ one can see that (\ref{4p1}) gets contradicted for vertices $\{l,r,j,(r)_{2}\}$.
Next when $l>j$, $a_{(l)_{1},j}=1$ otherwise $b_{l} R_{2} b_{j}$ (i.e; $b_{l}\prec b_{j}$) follows from (\ref{2}) and the fact $k_{1}<(l)_{1}\leq i$ (observation $2$) which contradicts (\ref{e1}). Hence applying (\ref{4p1}) on vertices $\{(l)_{1},r,j,(r)_{2}\}$ one can get contradiction.

\noindent Again when $r<l<j$ the following cases lead us to contradiction.

\noindent If $b_{j} R_{1} b_{l}$ holds then $a_{j,k_{2}^{'}}=0, a_{l,k_{2}^{'}}=1$ for some $k_{2}^{'}>j$. Now if $(r)_{2}>k_{2}^{'}$ then $a_{j,(r)_{2}}$ becomes zero as $a_{j,k_{2}^{'}}$ is right open and hence from (\ref{1}) we get $b_{j} R_{1} b_{r}$, i.e; $b_{j}\prec b_{r}$ which contradicts (\ref{e1}). Again if $j<(r)_{2}<k_{2}^{'}$, then $a_{r,l}=1$ follows from (\ref{4p1}) applying on vertices $\{(l)_{1},r,l,(r)_{2}\}$
(observation $2$). Hence from observation $6$ one can see that condition \ref{5pt1} of Lemma \ref{lm1} gets contradicted for $\{r,l,j,(r)_{2},k_{2}^{'}\}$.

\noindent Again if $b_{j} R_{2} b_{l}$ then there exist $k_{1}^{'}<l$ such that $a_{k_{1}^{'},l}=0, a_{k_{1}^{'},j}=1$. Hence (\ref{4p1}) gets contradicted for $\{k_{1}^{'},r,j,(r)_{2}\}$.

\noindent Now if $b_{j} R_{1} b_{k^{'}}$ and $b_{k^{'}} R_{2} b_{l}$ for some $k^{'}\in V$ such that $l<k^{'}<j$. Then there exist some $k_{2}^{'}>j,k_{1}^{'}<l$ satisfying $a_{j,k_{2}^{'}}=1, a_{k^{'},k_{2}^{'}}=1,a_{k_{1}^{'},l}=0, a_{k_{1}^{'},k^{'}}=1$. Similar contradiction will arise when $(r)_{2}>k_{2}^{'}$ as described in last paragraph. Now when $j<(r)_{2}<k_{2}^{'}$, $a_{r,k^{'}}=1$ applying (\ref{4p1}) on vertices $\{k_{1}^{'},r,k^{'},(r)_{2}\}$. Hence applying condition $2$ of Lemma \ref{lm1} on vertex set $\{r,k^{'},j,(r)_{2},k_{2}^{'}\}$ one can find contradiction.

\item For $i<l<j$, $a_{i,l}=1$. When $i<l<r$, using observations $1,2$ and applying (\ref{4p1}) on vertices $\{(l)_{1},i,l,r\}$ we get $a_{i,l}=1$. Next if $i<r<l$, $a_{r,l}=1$ (see proof of observation $3$). Now if we assume on contrary $a_{i,l}=0$ then $(l)_{1}<i$. But then condition \ref{5pt1} of Lemma \ref{lm1} gets contradicted for vertices $\{k_{1},(l)_{1},i,r,l\}$.

\item Again for $i<l<j$ if $b_{j} R_{1} b_{m}$ and $b_{m} R_{2} b_{l}$ holds for some $m$ such that $l<m<j$ then $k_{1}<k_{1}^{'}<i$ and $a_{i,m}=1$ where
$a_{k_{1}^{'},l}=0, a_{k_{1}^{'},m}=1$ for some $k_{1}^{'}<l$.  

\noindent If $k_{1}^{'}<k_{1}$ then $a_{k_{1}^{'},i}$ becomes zero as $a_{k_{1},i}=0$ is up open which imply $b_{m} R_{2} b_{i}$ and hence we get $b_{m}\prec b_{j}$ from Lemma \ref{lm2} (as $b_{i}\prec b_{j}$) which is not true from above. Again $k_{1}^{'}\neq i$ as $a_{i,l}=1$ from observation $4$. Now when $i<k_{1}^{'}<l$, (\ref{4p1}) gets contradicted for vertices $\{i,k_{1}^{'},l,m\}$.  Hence $k_{1}<k_{1}^{'}<i$.

\noindent Now if $m<r$ using observation $1$ we get $a_{i,m}=1$   applying (\ref{4p1}) on vertices $\{k_{1}^{'},i,m,r\}$. Again when $m>r$
from observation $3$ and (\ref{4p1}) applying on vertices $\{k_{1}^{'},r,m,j\}$ one can get $a_{r,m}=1$. Now applying condition \ref{5pt1} of Lemma \ref{lm1} on vertices $\{k_{1},k_{1}^{'},i,r,m\}$ we get $a_{i,m}=1$.
\end{enumerate}

\vspace{0.3em}

\noindent \textit{Main proof:} If $l<{k_{1}}$ then $a_{l,i}=0$ as $a_{k_{1},i}=0$ is up open, which imply $b_{l}\prec b_{i}$ as $l<i$. But this is not true from (\ref{e1}).
 Next if $k_{1}<l<i$. Now using observations $1,3$ one can verify that condition \ref{5pt1} of Lemma \ref{lm1} gets contradicted for vertices $\{k_{1},l,i,r,j\}$. 
Again when $j<l$, as $a_{i,j}=0$ is right open $a_{i,l}$ becomes zero. Hence from observation $2$ we get $k_{1}<(l)_{1}<i$. One can show now that condition \ref{5pt1} of Lemma \ref{lm1} gets contradicted for vertices $\{k_{1},(l)_{1},i,r,l\}$ using the fact $a_{r,l}=1$ (see proof of observation $3$). 

\vspace{0.2em}

\noindent  Next we consider the case $\bf{i<l<j}$. Now if $b_{j} R_{1} b_{l}$ then there exist $k_{2}^{'}>j$ such that $a_{j,k_{2}^{'}}=0, a_{l,k_{2}^{'}}=1$. From observations $3,4$ we get $a_{r,j}=1$ and $a_{i,l}=1$. Now applying condition \ref{6pt} of Lemma \ref{lm1} on vertex set $\{k_{1},i,l,r,j,k_{2}^{'}\}$ one can find contradiction. 

\vspace{0.2em}

\noindent  Next if $b_{j} R_{2} b_{l}$ then there exist $k_{1}^{'}<l$ such that $a_{k_{1}^{'},l}=0, a_{k_{1}^{'},j}=1$. Proceeding similarly as observation $5$ and replacing $m$ by $l$ one can prove $k_{1}<k_{1}^{'}<i$. Again from observation $3$ we get $a_{r,j}=1$. Hence one can find contradiction from condition \ref{5pt1} of Lemma \ref{lm1} applying on vertices $\{k_{1},k_{1}^{'},i,r,j\}$.

\vspace{0.2em}

\noindent Again if $b_{j} R_{1} b_{k^{'}}$ and $b_{k^{'}} R_{2} b_{l}$ for some $k^{'}\in V$ such that $l<k^{'}<j$.
 Then there exist $k_{2}^{'}>j, k_{1}^{'}<l$ such that $a_{j,k_{2}^{'}}=0, a_{k^{'},k_{2}^{'}}=1, a_{k_{1}^{'},l}=0, a_{k_{1}^{'},k^{'}}=1$. 
From observation $5$ one can find $k_{1}<k_{1}^{'}<i$. 
Again note $a_{r,j}=1$ from observation $3$ and $a_{i,k^{'}}=1$ from observation $5$. Hence applying condition \ref{6pt} of Lemma \ref{lm1} on vertices $\{k_{1},i,k^{'},r,j,k_{2}^{'}\}$ one can find contradiction.

\vspace{0.1em}
\noindent Thus in this case we prove if there exist some $r\in V$ satisfying $b_{r}R_{2} b_{i}$ such that $(r)_{2}\geq j$ then for any $l\in V$, $a_{j}\prec a_{l}$ in $P_{1}$ imply $(l)_{1}>i$. Hence $p_{i}<a_{j}$ in $P$ and therefore $p_{i}\notin I_{j}$.

\vspace{0.3em} 

\noindent \textit{Case (iii)}
If $b_{r} R_{1} b_{k^{'}}$ and $b_{k^{'}} R_{2} b_{i}$ for some $k^{'}\in V$ such that $i<k^{'}<r$. Then there exist $k_{2}>r, k_{1}<i$ such that $a_{r,k_{2}}=0, a_{k^{'},k_{2}}=1, a_{k_{1},i}=0, a_{k_{1},k^{'}}=1$. Note that $k_{2}>j$ otherwise (\ref{4p1}) gets contradicted for vertex set $\{k^{'},r,k_{2},(r)_{2}\}$ as $(r)_{2}\geq j$. Hence $(k^{'})_{2}\geq k_{2}>j$. Now as $b_{k^{'}} R_{2} b_{i}$, $i<k^{'}<j$ and $(k^{'})_{2}>j$,  proceeding similarly as Case (ii) just replacing $r$ by $k^{'}$ one can find same result.
\end{proof}

\begin{rmk}
In Example \ref{ex1} we found
the canonical sequence $P$ of a graph $G$ from its augmented adjacency matrix $A^{*}(G)$, arranged according to a proper MPTG ordering as described in Figure \ref{pm1}.
 
$\hspace*{9em} P: a_{2}\hspace*{.3 em} a_{1} \hspace*{.3 em} a_{5} \hspace*{.3 em} a_{6} \hspace*{.3 em} a_{4} \hspace*{.3 em} p_{1} \hspace*{.3 em} a_{3} \hspace*{.3 em} p_{2} \hspace*{.3 em} p_{3} \hspace*{.3 em} b_{2} \hspace*{.3 em} a_{7} \hspace*{.3 em} p_{4} \hspace*{.3 em} b_{1} \hspace*{.3 em} p_{5} \hspace*{.3 em} b_{5} \hspace*{.3 em} p_{6} \hspace*{.3 em} b_{6} \hspace*{.3 em} p_{7} \hspace*{.3 em} b_{4} \hspace*{.3 em} b_{3} \hspace*{.3 em} b_{7}$.

\vspace{.3em}
\noindent Therefore we get the proper MPTG representation $([a_{i},b_{i}],p_{i})$ of $G$ from the following table according to Theorem \ref{pmptg1}.
$$\begin{array}{|c c c c c c c c c c c c c c c c c c c c c|}
\hline

a_{2}& a_{1} & a_{5} & a_{6} & a_{4} & p_{1} & a_{3} & p_{2} & p_{3} & b_{2} & a_{7} & p_{4} & b_{1} & p_{5} & b_{5} & p_{6} & b_{6} & p_{7} & b_{4} & b_{3} & b_{7}\\

 1& 2 & 3 & 4 & 5 & 6 & 7 & 8 & 9 & 10 & 11 & 12 & 13 & 14 & 15 & 16 & 17 & 18 & 19 & 20 & 21\\
 \hline
\end{array}
$$
\end{rmk}

\noindent Catanzaro et. al \cite{Catanzaro} have shown that the interval graphs form a strict subclass of MPTG. We extend this result to the class of proper MPTG graphs.

\begin{cor}
Interval graphs form a strict subclass of proper MPTG.
\end{cor}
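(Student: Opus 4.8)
The plan is to establish the corollary in two independent parts: that every interval graph is a proper MPTG, and that some proper MPTG is not an interval graph. For the inclusion I would deliberately avoid manufacturing a proper MPTG representation by hand from a given interval representation (which would force one to stretch all the intervals until none is properly contained in another and then reposition the distinguished points so that adjacency is governed by point containment rather than overlap) and instead route everything through the characterization in Theorem~\ref{pmptg1}: it suffices to exhibit, for an arbitrary interval graph $G=(V,E)$, a vertex ordering that is a proper MPTG ordering in the sense of Definition~\ref{pmptgordering}, i.e.\ one satisfying condition~\ref{4pt} of Theorem~\ref{mptg1} together with all four conditions of Lemma~\ref{lm1}.

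The key step is to verify that simply sorting the vertices by the left endpoints of their intervals already produces such an ordering. Fix a representation $\{I_v=[a_v,b_v]\mid v\in V\}$ and let $\sigma$ list the vertices as $v_1,\dots,v_n$ with $a_{v_1}\le\dots\le a_{v_n}$, ties broken arbitrarily. First I would record the ``interval ordering'' property of $\sigma$: if $v_i<v_j<v_k$ in $\sigma$ and $v_iv_k\in E$, then $v_iv_j\in E$. This is immediate, since $a_{v_i}\le a_{v_k}$ and $I_{v_i}\cap I_{v_k}\ne\emptyset$ force $a_{v_k}\le b_{v_i}$, whence $a_{v_j}\le a_{v_k}\le b_{v_i}$ and, using $a_{v_i}\le a_{v_j}\le b_{v_j}$, one gets $a_{v_j}\in I_{v_i}\cap I_{v_j}$; the only care needed is to see that ties among left endpoints do not break this. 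Every condition defining a proper MPTG ordering then falls out of this single property, because in each of them the hypothesis supplies an edge $xz\in E$ with a third vertex strictly between $x$ and $z$ in $\sigma$: the $4$-point condition uses $u<v<y$ with $uy\in E$; condition~\ref{3pt} of Lemma~\ref{lm1} is literally the interval ordering property; condition~\ref{5pt1} uses $v_1<v_3<v_4$ with $v_1v_4\in E$; condition~\ref{5pt2} uses $v_1<v_2<v_3$ with $v_1v_3\in E$; and the $6$-point condition~\ref{6pt} uses $v_1<v_2<v_j$ with $v_1v_j\in E$. Hence $\sigma$ is a proper MPTG ordering, and Theorem~\ref{pmptg1} yields that $G$ is a proper MPTG. (Note that only this easy direction is needed; I do not need the converse, that an interval ordering forces $G$ to be an interval graph.)

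For strictness I would invoke the $4$-cycle $C_4$: Figure~\ref{notinterval} already exhibits a proper MPTG representation of $C_4$, while $C_4$ is not chordal and hence not an interval graph. Thus $C_4$ belongs to proper MPTG but not to the class of interval graphs, so the containment is strict. As a sanity check, the inclusion is consistent with Lemma~\ref{atfree2} and Theorem~\ref{perfect}, since interval graphs are themselves AT-free and perfect.

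The only genuine ``obstacle'' here is conceptual rather than computational: one must resist the temptation to construct an explicit proper MPTG representation and instead notice that the left-endpoint ordering is strong enough to force all of the $3$-, $4$-, $5$- and $6$-point conditions simultaneously, leaving the construction of a representation entirely to the (already proved) converse direction of Theorem~\ref{pmptg1}. Everything else is routine verification.
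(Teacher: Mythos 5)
Your proposal is correct and follows essentially the same route as the paper: order the vertices by increasing left endpoints, check that this ordering satisfies condition (\ref{4p1}) and all conditions of Lemma \ref{lm1} (the paper leaves this as ``easy to check''; you fill in the details via the $3$-point interval-ordering property), invoke Theorem \ref{pmptg1}, and use $C_{4}$ from Figure \ref{notinterval} for strictness.
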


\begin{proof}
Let $G=(V,E)$ be an interval graph having representation $\{I_{v}=[a_{v},b_{v}]|v\in V\}$. Now arranging the intervals according to the increasing order of left end points ($a_{v}$) and ordering vertices of $V$ with the same it is easy to check that $G$ satisfies (\ref{4p1}) and conditions of Lemma \ref{lm1}. Thus from  Theorem \ref{pmptg1} and the fact that $C_{4}$ is a proper MPTG (see Figure \ref{notinterval}) but not an interval graph one can conclude the result. 
\end{proof}

\noindent In \cite{Bogart1} Bogart and West  proved that proper interval graphs are same as unit interval graphs. Here we can also conclude a similar result for the class of proper MPTG graphs.

\begin{prop}
Let $G=(V,E)$ be an undirected graph. Then the following are equivalent:
\begin{enumerate}
\item $G$ is a proper MPTG.

\item $G$ is a unit MPTG.
\end{enumerate} 
\end{prop}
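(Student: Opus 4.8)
The implication ``unit MPTG $\Rightarrow$ proper MPTG'' is immediate: if every interval has a common length $c$, then $I_{u}\subseteq I_{v}$ forces $I_{u}=I_{v}$, so no interval is properly contained in another. The content is the converse, and the plan is to imitate the passage from proper interval graphs to unit interval graphs, carried out on the combinatorial skeleton produced by our characterization.

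First I would fix a proper MPTG representation of $G$ and put it in general position: by Proposition \ref{distinct} the points may be taken distinct, and a sufficiently small perturbation of the endpoints, which changes neither the adjacencies nor the properness, makes all $3n$ numbers $\{a_{v},b_{v},p_{v}:v\in V\}$ distinct; equivalently one may start from the canonical sequence $P$ and the representation $\{(I_{i},p_{i})\}$ built in the proof of Theorem \ref{pmptg1}, which already has this form. The key remark is that whether $uv\in E$ (that is, whether $p_{u}\in I_{v}$ and $p_{v}\in I_{u}$) and whether $p_{v}\in I_{v}$ depend only on the relative order of these $3n$ numbers on the line. Hence $G$ is unaffected if the $3n$ numbers are replaced by any other $3n$ reals in the same order, so it suffices to carry out such a replacement making $b_{v}-a_{v}$ the same for all $v$. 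Moreover properness says precisely that the ordering of the left endpoints $a_{v}$ coincides with the ordering of the right endpoints $b_{v}$: listing the $3n$ symbols in order, each $I_{v}$ occupies a block of consecutive ``gaps'' between neighbouring symbols, and properness is exactly the statement that these $n$ gap-blocks are pairwise non-nested.

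Thus everything reduces to the following rescaling claim, which is the analogue here of ``proper interval $=$ unit interval'' and which I expect to be the main obstacle: if $J_{1},\dots,J_{n}$ are nonempty intervals of consecutive integers inside $\{1,\dots,m\}$ and no $J_{r}$ is contained in another, then there are positive reals $g_{1},\dots,g_{m}$ with $\sum_{k\in J_{r}}g_{k}$ independent of $r$. To prove this I would pass to the cumulative sums $G_{k}=g_{1}+\dots+g_{k}$ and study the constraint graph $H$ on the cut points $\{0,1,\dots,m\}$ with an edge joining $\min J_{r}-1$ to $\max J_{r}$ for each $r$. Since the numbers $\min J_{r}-1$ are pairwise distinct and likewise the numbers $\max J_{r}$, every vertex of $H$ has degree at most two, so $H$ is a disjoint union of paths; the vertices of one path component, listed in increasing order $x_{0}<x_{1}<\dots<x_{\ell}$, decompose $[x_{0}+1,x_{\ell}]$ into the consecutive blocks $[x_{0}+1,x_{1}],\dots,[x_{\ell-1}+1,x_{\ell}]$, which are exactly the $J_{r}$'s of that component. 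So the equations $G_{x_{j}}-G_{x_{j-1}}=c$ are consistent and force $G_{x_{j}}=G_{x_{0}}+jc$ within each component.

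It then remains to choose the free base level of each component, together with small positive increments at the cut points lying in no $J_{r}$, so that $G_{0}<G_{1}<\dots<G_{m}$. This I would do by a left-to-right greedy assignment: at a cut point that is $\max J_{r}$ the value is forced to be $G_{\min J_{r}-1}+c$, and one checks it exceeds the preceding value because, by non-nestedness, every rigid jump of size $c$ that falls inside a given $J_{r}$ has its ``base'' cut point strictly to the left of $\min J_{r}-1$; equivalently, the associated system of difference constraints has no negative cycle. The easy half and the reduction are essentially bookkeeping; the rescaling claim — the verification that non-nestedness is precisely what permits equalising the interval lengths while keeping the order and the strict inclusions $a_{v}<p_{v}<b_{v}$ — is where the work lies, exactly as the analogous step is the heart of the proper-versus-unit question for interval graphs. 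Once the rescaled $3n$ numbers are fixed, reading off $I_{v}=[a_{v},b_{v}]$ and $p_{v}$ gives an MPTG representation of $G$ in which all intervals have equal length, i.e. a unit MPTG representation, which completes the equivalence.
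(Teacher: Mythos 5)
Your proposal is correct and, at its core, follows the same strategy as the paper: both directions reduce to re-embedding the $3n$ endpoints and points on the line in their given relative order while forcing every interval to have the same length, the real content being that properness (the coincidence of the left-endpoint and right-endpoint orders) makes such a re-embedding feasible. The paper executes this directly on the canonical sequence $P$ of Definition \ref{canseq}: it processes the intervals in the common order of $P_1$ and $P_2$, placing $a_{k_i}$ at a value $\alpha_i$ with $\max\{\alpha_{i-1},m_i\}<\alpha_i<M_i$ and $b_{k_i}$ at $\alpha_i+l$, then inserts the $p_i$'s, and asserts (rather than argues) that each such choice is possible. Your route differs in the bookkeeping and is more explicit exactly where the paper is terse: you pass to gap variables and cut points, note that the rigidity constraints form a graph whose components are monotone paths (distinctness of left ends and of right ends gives degree at most two, but you should also say why cycles are impossible, namely that each edge joins a smaller cut point to a larger one and no vertex is the maximum of two constraints), and then establish global monotonicity by a slack argument in which non-nestedness enters precisely where it must — equivalently, feasibility of a difference-constraint system with no negative cycle. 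Two points to tighten: a perturbation of an arbitrary proper representation must keep each $p_v$ inside $I_v$ and preserve non-nestedness (or simply start, as you suggest, from the integer representation produced in the proof of Theorem \ref{pmptg1}); and in the greedy step the free increments at unconstrained cut points must be chosen small relative to the current slacks $G_{\min J_r-1}+c-G_{t-1}$ of all active constraints, not merely positive, for the induction to close. With those details filled in, your argument is a complete and somewhat more self-contained justification of the equivalence than the one given in the paper.
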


\begin{proof}
$(1)\Rightarrow (2)$ Let $G=(V,E)$ be a proper MPTG with representation $\{(I_{v},p_{v})|v\in V\}$ where $I_{v}=[a_{v},b_{v}]$ is an interval and $p_{v}$ is a point within $I_{v}$. From Theorem \ref{pmptg1} it follows that $G$ must possesses a proper MPTG ordering of $V$. Hence one can able to construct the sequences $P_{1},P_{2},P$ as described in Definition \ref{canseq}.
\noindent Let $P_{1}:b_{k_{1}}\prec\hdots\prec b_{k_{n}}$ and $P_{2}:a_{k_{1}}\prec\hdots\prec a_{k_{n}}$. We assign $a_{k_{1}},b_{k_{1}}$ with real values $\alpha_{1},\alpha_{1}+l$ in $P$. At $i$ th step we assign $a_{k_{i}},b_{k_{i}}$ with real values $\alpha_{i},\alpha_{i}+l$ where $\text {max}\{\alpha_{i-1},m_{i}\}<\alpha_{i}<M_{i}$ where $m_{i}=\text{max}\{\alpha_{j}+l \hspace{0.3em}| \hspace{0.3em}1\leq j\leq i-1, b_{k_{j}}\prec a_{k_{i}} \text{in} \hspace{0.3em} P\}$ and $M_{i}=\text{min}\{\alpha_{j}+l \hspace{0.3em}| \hspace{0.3em}1\leq j\leq i-1, a_{k_{i}}\prec b_{k_{j}} \text{in} \hspace{0.3em} P\}$. 
 After the assignment is done for all $a_{k_{i}},b_{k_{i}}$ where $1\leq i\leq n$, assign values for $p_{i}$ in $P$ in such a way so that $P$ still remains an increasing sequence. Now it is easy to check $([a_{k_{i}},b_{k_{i}}],p_{i})$ gives us unit MPTG representation (unit length $l$) as order of $a_{i},b_{i},p_{i}$ remain intact in $P$
after assigning their values in above way. 
\vspace{0.1em}

\noindent $(2)\Rightarrow (1)$ Converse is obvious.
\end{proof}

\begin{rmk}
The unit vs. proper question is addressed for many graph classes in reference \cite{Golumbic2}. Golumbic and Trenk wrote ``For min-tolerance graphs, the classes are different. For max-tolerance graphs, the question is still open'' [\cite{Golumbic2}, p. $215$]. We settle this query below.
\end{rmk}

\begin{prop}\label{unitmptg}
Unit max-tolerance graphs form a strict subclass of proper max-tolerance graphs. 
\end{prop}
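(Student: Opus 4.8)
The plan is to establish the claim in two parts: first that every unit max-tolerance graph is a proper max-tolerance graph (the easy containment), and second that this containment is strict, i.e., there exists a proper max-tolerance graph that admits no unit max-tolerance representation. For the first part I would argue directly from the definitions: if $G$ has a max-tolerance representation in which every interval $I_v$ has the same length $\ell$, then no interval can properly contain another (two distinct intervals of equal length are never nested), so the same representation is already a proper max-tolerance representation. This requires only the observation that we may, as noted in the preliminaries, assume $t_v \le |I_v|$ for each $v$, so the unit representation is a bona fide max-tolerance representation to begin with.

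For the strictness, I would exhibit a concrete small graph $H$ that is a proper max-tolerance graph but not a unit max-tolerance graph. The natural candidate is a star $K_{1,n}$ for suitably large $n$ (or a similar "large-degree" gadget): it is easy to give a proper max-tolerance representation of any star by placing the center's interval to overlap each leaf interval in a distinct, tiny amount and setting tolerances accordingly, with no two intervals nested. To show it is \emph{not} a unit max-tolerance graph, I would suppose for contradiction that a unit representation of length $\ell$ exists, and then count: the center vertex $c$ must have $|I_c \cap I_v| \ge \max\{t_c, t_v\} \ge t_c$ for every leaf $v$, while for two leaves $u,v$ (nonadjacent) we need $|I_u \cap I_v| < \max\{t_u, t_v\}$. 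Because all intervals have the same length $\ell$, the positions of the leaf intervals are forced into a narrow band around $I_c$, and a pigeonhole/interval-geometry argument will show that once $n$ is large enough, two leaf intervals are forced to overlap each other by at least as much as each overlaps $I_c$, contradicting their nonadjacency — essentially the same mechanism used in the proof of Lemma~\ref{notMTG} and Corollary~\ref{k23pf}. An even cleaner route, if it works out, is to invoke Corollary~\ref{k23pf}-style reasoning: find a proper max-tolerance graph known to the paper and argue unit-ness fails.

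The main obstacle I anticipate is the quantitative heart of the strictness argument: making precise the claim that in a \emph{unit}-length max-tolerance representation the leaves of a large star cannot be kept pairwise "far enough apart" while each remaining "close enough" to the center. One must handle the tolerances carefully — a leaf $v$ with very small $t_v$ is easy to make adjacent to $c$ but then also tends to be adjacent to nearby leaves — and rule out all configurations. I would organize this as: (i) normalize so $\ell = 1$ and $I_c = [0,1]$; (ii) show each leaf interval must intersect $[0,1]$ in length $\ge t_c$ hence lies within $[-1+t_c,\, 2-t_c]$; (iii) partition leaves by whether they lie mostly to the left or mostly to the right of $I_c$; (iv) within one side, sort by left endpoint and show consecutive leaves overlap each other by at least the smaller of their overlaps with $I_c$, forcing an edge once three leaves crowd together, which contradicts independence of the leaf set for $n \ge 3$ or so. If a single small graph like $K_{1,3}$ or $K_{1,4}$ already fails to be unit max-tolerance, the counting collapses to a short case analysis; otherwise one picks $n$ large. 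Either way the containment is strict, completing the proof.
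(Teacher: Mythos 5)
Your first half is fine and is exactly the paper's observation: two distinct intervals of equal length can never be properly nested, so a unit max-tolerance representation is already a proper one. The strictness half, however, has a genuine gap: your proposed witness cannot work. A star $K_{1,n}$ is an interval graph, and (as the paper itself recalls in the Conclusion, citing \cite{san}) interval graphs $\subset$ central MPTG $=$ unit max-tolerance graphs; hence \emph{every} star is a unit max-tolerance graph, for every $n$. One can also see this concretely, which shows exactly where your pigeonhole sketch breaks: take $I_{c}=[0,1]$, $t_{c}=a_{1}$, and for the $k$-th leaf $I_{k}=[a_{k}-1,a_{k}]$, $t_{k}=a_{k}$, where $0<a_{1}<a_{2}<\cdots<1$ is chosen with $a_{k+1}>(1+a_{k})/2$. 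Then each leaf meets $I_{c}$ in length $a_{k}\geq \max\{t_{c},t_{k}\}$, while two leaves $k<l$ meet in length $1+a_{k}-a_{l}<a_{l}=\max\{t_{k},t_{l}\}$, so arbitrarily many pairwise nonadjacent leaves fit on one side of the center. The flaw in your step (iv) is that nonadjacency of two leaves is measured against their own tolerances, not against their overlaps with $I_{c}$; since tolerances may escalate toward the full unit length, crowding leaf intervals together never forces an edge, and no choice of $n$ (nor the small cases $K_{1,3}$, $K_{1,4}$, which are interval graphs) rescues the argument. The $K_{2,3}$-style fallback also points the wrong way: Corollary \ref{k23pf} shows $K_{2,3}$ is \emph{not} a proper max-tolerance graph, so it cannot separate unit from proper in the required direction.

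What is actually needed is a graph that is proper max-tolerance but not unit max-tolerance, and the paper's witness is $\overline{C_{6}}$: it gives an explicit proper max-tolerance representation (e.g. $I_{1}=[2,8]$, $t_{1}=2.9$, $I_{2}=[4,10]$, $t_{2}=4.5$, $I_{3}=[1,7.1]$, $t_{3}=1$, $I_{4}=[5,11]$, $t_{4}=3$, $I_{5}=[3,9]$, $t_{5}=4.1$, $I_{6}=[5.2,12]$, $t_{6}=1.5$), and then invokes \cite{san}, where unit max-tolerance graphs are shown to coincide with central MPTG and $\overline{C_{6}}$ is shown not to be one. So to repair your proof you must either reproduce such an obstruction for $\overline{C_{6}}$ (or another non-interval, non-unit example) or cite it; no star-based counting can close the gap.
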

\begin{proof}
An unit max-tolerance graph is a proper max-tolerance graphs with the same tolerance representation.
Let $\{v_{1},\hdots,v_{6}\}$ are vertices of $\overline{C_{6}}$ occurred in circularly consecutive order in $C_{6}$. Then $I_{1}=[2,8], t_{1}=2.9, I_{2}=[4,10], t_{2}=4.5, I_{3}=[1,7.1], t_{3}=1, I_{4}=[5,11], t_{4}=3, I_{5}=[3,9], t_{5}=4.1, I_{6}=[5.2,12], t_{6}=1.5$ is a proper max-tolerance representation of $\overline{C_{6}}$. But from \cite{san} one can verify that $\overline{C_{6}}$ is not a unit max-tolerance graph. Hence the result follows.
\end{proof}

\noindent In the following Proposition we will show that proper MPTG and proper max-tolerance graphs are not comparable although both of them belong to the class of MPTG.

\begin{prop}\label{pmtg2}
Proper MPTG and the class of proper max-tolerance graphs are not comparable.
\end{prop}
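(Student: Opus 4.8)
The statement asserts incomparability of two graph classes, so the plan is to produce two witnessing graphs: one that is a proper MPTG but not a proper max-tolerance graph, and one that is a proper max-tolerance graph but not a proper MPTG. Nothing deeper than this is needed; both witnesses can be taken from graphs already discussed in the paper.

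For the first witness I would take $K_{2,3}$. By Proposition~\ref{bipt}, every complete bipartite graph is a proper MPTG, so $K_{2,3}$ is one; and by Corollary~\ref{k23pf}, $K_{2,3}$ is not a proper max-tolerance graph. This separates proper MPTG from proper max-tolerance graphs on one side, with no further work.

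For the second witness I would take the $5$-cycle $C_5$. By Lemma~\ref{Cn} it is not a proper MPTG. To see that $C_5$ \emph{is} a proper max-tolerance graph I would give an explicit (even unit-length) representation: reading the vertices $1,2,3,4,5$ around the cycle, set $I_i=[2i-2,\,2i+8]$ — all of length $10$, so no interval contains another and the representation is proper (indeed unit) — and assign tolerances $t_1=t_5=1$, $t_2=t_3=7$, $t_4=5$. Then the four ``consecutive'' pairs $12,23,34,45$ have $|I_u\cap I_v|=8\ge\max\{t_u,t_v\}$, and the pair $15$ has $|I_1\cap I_5|=2\ge 1=\max\{t_1,t_5\}$, so all five edges of $C_5$ are realized; while the five ``chord'' pairs $13,14,24,25,35$ have overlap lengths $6,4,6,4,6$ respectively, in each case strictly below the larger of the two relevant tolerances, so no chord appears. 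Hence $C_5$ is a proper max-tolerance graph that is not a proper MPTG, and the two witnesses together show the classes are incomparable.

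The first half of the argument is purely citational, so the only real content is the $C_5$ representation in the second half. That is a finite verification, but the tolerances must be calibrated against the four distinct overlap lengths $2,4,6,8$ simultaneously: the single short overlap forced by the cycle edge $15$ pins $t_1,t_5\le 2$, after which the chord non-edges at $1$ and $5$ push their partners' tolerances up — this interplay is the only delicate point, and there is no structural obstacle beyond exhibiting numbers that satisfy all ten inequalities at once.
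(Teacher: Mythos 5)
Your proposal is correct and matches the paper's proof in structure: both use $K_{2,3}$ (proper MPTG by Proposition~\ref{bipt}, not proper max-tolerance by Corollary~\ref{k23pf}) and $C_5$ (not proper MPTG by Lemma~\ref{Cn}, but proper max-tolerance via an explicit representation), and your ten overlap/tolerance inequalities for $C_5$ all check out. The only difference is cosmetic: the paper exhibits a non-unit proper representation of $C_5$, while your intervals all have length $10$, which incidentally shows the slightly stronger fact that $C_5$ is even a unit max-tolerance graph.
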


\begin{proof}
 $C_{5}$ is a proper max-tolerance graph with representation $I_{1}=[1,6],t_{1}=0.25, I_{2}=[1.2,8], t_{2}=4.7, I_{3}=[3,10], t_{3}=4.8, I_{4}=[5,12], t_{4}=4, I_{5}=[5.5,13], t_{5}=0.35$. But $C_{5}\notin \text{proper MPTG}$ follows from Proposition \ref{Cn}.

\noindent Again we note from Proposition \ref{bipt} that $K_{2,3}$ is a proper MPTG. But it is not a proper max-tolerance graph follows from Corollary \ref{k23pf}.
\end{proof}

                                                                                                                                                                                                                                                                                                                                                                                                                                                                                                                                                                                                                                                                                                                                                                                                                                                                                                                                                                                                                                                                                                                                                                                                                                                                                                                                                                                                                                                                                                                                                                                                                                                                                                                                                                                                                                                                                                                                                                                                                                                                                                                                                                                                                                                                                                                                                                                                                                                                                                                                                                                                                                                                                                                                                                                                                                                                                                                                                                                                                                                                                                                                                                                                                                                                                                                                                                                                                                                                                                                                                                                                                                                                                                                                                                                                                                                                                                                                                                                                                                                                                                                                                                                                                                                                \section{Conclusion}

\noindent In this article we introduce proper max-point-tolerance graphs and characterized this graph class in Theorem \ref{pmptg1}. We have proved in previous section that interval graphs $\subset$ proper MPTG. Interestingly one can note that both of these graph classes are AT-free and perfect 
(section \ref{pmpt}, \cite{Lekker}). Therefore any proper MPTG which is not an interval graph must contain $C_{4}$ as an induced subgraph. Also 
 interval graphs $\subset$ central MPTG$=$ unit max-tolerance graph $\subset$ max-tolerance graph according to the reference \cite{san}.
Again Proposition \ref{unitmptg} helps us to conclude unit max-tolerance graphs $\subset$ proper max-tolerance graphs. Next we find that $\overline{C_{6}}\in$ proper MPTG$\setminus$ central MPTG. 
It is a proper MPTG with representation $I_{1}=[20,40],p_{1}=39, I_{2}=[15,38],p_{2}=30,I_{3}=[32,46],p_{3}=33,I_{4}=[25,42],p_{4}=27,I_{5}=[28,44],p_{5}=37,I_{6}=[10,36],p_{6}=34$. But it is not a central MPTG follows from Theorem $3.9$ of \cite{san}.
 Again $C_{n},n\geq 5\in$ central MPTG $\setminus$ proper MPTG follows from \cite{Soto} and Proposition \ref{Cn} of this article. Hence these two classes become incomparable.  Next in Proposition \ref{pmtg2} we see that proper max-tolerance graphs are not comparable to proper MPTG whereas both of these graph classes belong to the class of MPTG. Next we find $G_{1}$ in Figure \ref{atfree1} is a MPTG. It is also max-tolerance graph with representation $I_{1}=[10,25],t_{1}=5, I_{2}=[45,53],t_{2}=8, I_{3}=[65,75],t_{3}=10,I_{4}=[20,40],t_{4}=5,I_{5}=[30,70],t_{5}=10,I_{6}=[45,60],t_{6}=8,I_{7}=[60,80],t_{7}=10$. But it is not a proper MPTG. 
Again $K_{2,3}$ is a max-tolerance graph with representation $I_{x}=[-20,0], t_{x}=1, I_{y}=[0,20], t_{y}=1$ for one partite set and $I_{v_{1}}=[-2,2], t_{v_{1}}=1, I_{v_{2}}=[-6,6], t_{v_{2}}=5, I_{v_{3}}=[-20,20], t_{v_{3}}=19$ for the other partite set. But it is not a proper max-tolerance graph from Proposition \ref{k23pf}.
Lastly from Theorem \ref{incom} we establish MPTG and max tolerance graphs are not comparable. Combining these we obtain relations between some subclasses of max-tolerance graphs and MPTG related to proper MPTG in Figure \ref{fig}.

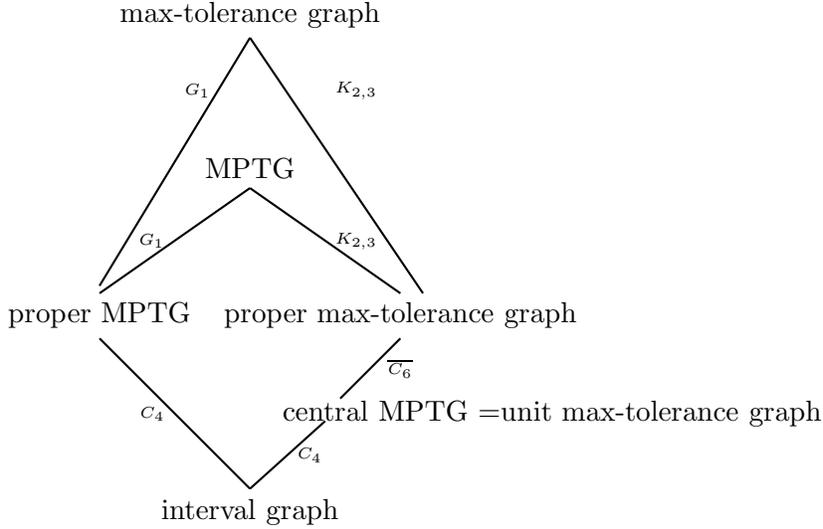
\begin{figure}
\centering
\begin{tikzpicture}
\draw[-][draw=black, thick] (0,0) -- (-2,2);
\draw[-][draw=black, thick] (0,0) -- (1,0.9);
\draw[-][draw=black, thick] (1.2,1.2) -- (2,2);
\draw[-][draw=black, thick] (-2,2.6) -- (0,4);
\draw[-][draw=black, thick] (2,2.6) -- (0,4);
\draw[-][draw=black, thick] (2.3,2.6) -- (0,6);
\draw[-][draw=black, thick] (-2,2.7) -- (0,6);

\node [below] at (0,0) {{interval graph}};
\node [above] at (-2,2) {{proper MPTG}};
\node [right] at (0.3,1) {{central MPTG $=$unit max-tolerance graph}};
\node [above] at (2,2) {{proper max-tolerance graph}};
\node [above] at (0,4) {{MPTG}};
\node [above] at (0,6) {{max-tolerance graph}};

\node [left] at (-1,1) {{\tiny{$C_{4}$}}};
\node [right] at (0.5,0.45) {{\tiny{$C_{4}$}}};
\node [right] at (1.7,1.6) {{\tiny{$\overline{C_{6}}$}}};
\node [right] at (1,5.3) {{\tiny{$K_{2,3}$}}};
\node [right] at (1,3.3) {{\tiny{$K_{2,3}$}}};
\node [left] at (-1,3.3) {{\tiny{$G_{1}$}}};
\node [right] at (-1,5.3) {{\tiny{$G_{1}$}}};
\end{tikzpicture}
\caption{Hierarchy between subclasses of max tolerance graph and MPTG}\label{fig}
\end{figure}

\vspace{0.6em}
\noindent
\textbf{Acknowledgements:}

\vspace{.2cm}
\noindent This research is supported by UGC (University Grants Commission) NET fellowship (21/12/2014(ii)EU-V) of the author.

{\small }

\vspace{0.6em}
\noindent \textbf{Appendix}
\vspace{0.5em}

\noindent Here we provide the detailed verification of the other cases occurred in Lemma \ref{lm2}. 
\vspace{0.3em}

\noindent \textbf{Case (ii) } $i_{2}<i_{3}<i_{1}$

\noindent As $b_{i_{1}}\prec b_{i_{2}}$, if $b_{i_{1}} R_{1} b_{i_{2}}$ then there exists some $k_{2}>i_{1}$ such that $a_{i_{1},k_{2}}=0, a_{i_{2},k_{2}}=1$. We note that $a_{i_{3},k_{2}}$ must be one otherwise from (\ref{1}) we get $b_{i_{3}} R_{1} b_{i_{2}}$, i.e; $b_{i_{3}}\prec b_{i_{2}}$ which is contradicts our assumption. Hence $b_{i_{1}} R_{1} b_{i_{3}}$ from (\ref{1}), i.e; $b_{i_{1}}\prec b_{i_{3}}$. Now if $b_{i_{1}} R_{2} b_{i_{2}}$ then there exists some $k_{1}<i_{2}$ such that $a_{k_{1},i_{2}}=0, a_{k_{1},i_{1}}=1$. Now if $a_{k_{1},i_{3}}=1$ then $b_{i_{3}} R_{2} b_{i_{2}}$ from (\ref{2}) imply $b_{i_{3}}\prec b_{i_{2}}$ which is again a contradiction. Hence $a_{k_{1},i_{3}}=0$, which imply $b_{i_{1}} R_{2} b_{i_{3}}$ from (\ref{2}), i.e; $b_{i_{1}}\prec b_{i_{3}}$.
Now if $b_{i_{1}} R_{1} b_{k}$ and $b_{k} R_{2} b_{i_{2}}$ for some $k\in V$ such that $i_{2}<k<i_{1}$. Then there exists $k_{2}>i_{1}$ and $k_{1}<i_{2}$ such that $a_{i_{1},k_{2}}=0, a_{k,k_{2}}=1, a_{k_{1},i_{2}}=0, a_{k_{1},k}=1$. Now when $i_{2}<k<i_{3}$, if $a_{i_{3},k_{2}}=0$ then $b_{i_{3}} R_{1} b_{k}$ imply $b_{i_{3}}\prec b_{i_{2}}$ from (\ref{3}) as $b_{k} R_{2} b_{i_{2}}$ which is a contradiction. Hence $a_{i_{3},k_{2}}=1$ and from (\ref{1}) it follows  $b_{i_{1}} R_{1} b_{i_{3}}$, i.e; $b_{i_{1}}\prec b_{i_{3}}$. 
Now when $i_{3}<k<i_{1}$, if $a_{k_{1},i_{3}}=1$ then $b_{i_{3}} R_{2} b_{i_{2}}$ from (\ref{2}) imply $b_{i_{3}}\prec b_{i_{2}}$ which is a contradiction. Hence $a_{k_{1},i_{3}}=0$  which imply $b_{k} R_{2} b_{i_{3}}$ which helps us to conclude $b_{i_{1}}\prec b_{i_{3}}$ from (\ref{3}) as $b_{i_{1}} R_{1} b_{k}$.

\vspace{0.3em}
\noindent \textbf{Case (iii)}  $i_{1}<i_{2}<i_{3}$ 

\noindent In this case we assume on contrary $b_{i_{3}}\prec b_{i_{1}}$.

\noindent Let $b_{i_{3}} R_{1} b_{i_{1}}$. Then there exists $k_{2}>i_{3}$ such that $a_{i_{3},k_{2}}=0, a_{i_{1},k_{2}}=1$. Now if $a_{i_{2},k_{2}}=1$ then $b_{i_{3}} R_{1} b_{i_{2}}$ from (\ref{1}) imply $b_{i_{3}}\prec b_{i_{2}}$ which contradicts our assumption. Again if $a_{i_{2},k_{2}}=0$ then $b_{i_{2}} R_{1} b_{i_{1}}$ imply $b_{i_{2}}\prec b_{i_{1}}$ which is again a contradiction. Now if $b_{i_{3}} R_{2} b_{i_{1}}$ then there exists some $k_{1}<i_{1}$ such that $a_{k_{1},i_{1}}=0, a_{k_{1},i_{3}}=1$. Now if $a_{k_{1},i_{2}}=0$ then $b_{i_{3}} R_{2} b_{i_{2}}$ from (\ref{2}) imply $b_{i_{3}}\prec b_{i_{2}}$ which is a contradiction.
Again if $a_{k_{1},i_{2}}=1$ then $b_{i_{2}} R_{2} b_{i_{1}}$ imply $b_{i_{2}}\prec b_{i_{1}}$ which is again a contradiction. Now let $b_{i_{3}} R_{1} b_{k}$ and $b_{k} R_{2} b_{i_{1}}$ for some $k\in V$ such that $i_{1}<k<i_{3}$. Then there exist $k_{2}>i_{3}$ and $k_{1}<i_{1}$ such that $a_{i_{3},k_{2}}=0, a_{k,k_{2}}=1, a_{k_{1},i_{1}}=0, a_{k_{1},k}=1$. Now when $i_{1}<k<i_{2}$ then if $a_{i_{2},k_{2}}=1$ then $b_{i_{3}} R_{1} b_{i_{2}}$ from (\ref{1}) imply $b_{i_{3}}\prec b_{i_{2}}$ which is not true. Again if $a_{i_{2},k_{2}}=0$ then $ b_{i_{2}} R_{1} b_{k}$ from (\ref{1}) which helps us to conclude $b_{i_{2}}\prec b_{i_{1}}$ from (\ref{3}) as $b_{k} R_{2} b_{i_{1}}$ which is again a contradiction. Now when $i_{2}<k<i_{3}$ then if  $a_{k_{1},i_{2}}=1$, $b_{i_{2}} R_{2} b_{i_{1}}$, i.e; $b_{i_{2}}\prec b_{i_{1}}$ from (\ref{2}) which is not true. Again if $a_{k_{1},i_{2}}=0$ then $b_{k} R_{2} b_{i_{2}}$ which helps us to conclude $b_{i_{3}}\prec b_{i_{2}}$  from (\ref{3}) as $b_{i_{3}} R_{1} b_{k}$ which is again a contradiction. 

\vspace{0.3em}
\noindent \textbf{Case (iv)} $i_{3}<i_{1}<i_{2}$

\noindent As $b_{i_{2}}\prec b_{i_{3}}$, if $b_{i_{2}} R_{1} b_{i_{3}}$ then there exist $k_{2}>i_{2}$ such that $a_{i_{2},k_{2}}=0, a_{i_{3},k_{2}}=1$. Now if $a_{i_{1},k_{2}}=1$ then $b_{i_{2}} R_{1} b_{i_{1}}$ from (\ref{1}), i.e; $b_{i_{2}}\prec b_{i_{1}}$ which is a contradiction. Hence $a_{i_{1},k_{2}}=0$ which imply $b_{i_{1}} R_{1} b_{i_{3}}$. Therefore $b_{i_{1}}\prec b_{i_{3}}$. If $b_{i_{2}} R_{2} b_{i_{3}}$ then there exist $k_{1}<i_{3}$ such that $a_{k_{1},i_{3}}=0, a_{k_{1},i_{2}}=1$. If $a_{k_{1},i_{1}}=0$ then $b_{i_{2}} R_{2} b_{i_{1}}$from (\ref{2}) imply
$b_{i_{2}}\prec b_{i_{1}}$ which is a contradiction. Hence $a_{k_{1},i_{1}}=1$ which imply $b_{i_{1}} R_{2} b_{i_{3}}$, i.e; $b_{i_{1}}\prec b_{i_{3}}$. 
If $b_{i_{2}} R_{1} b_{k}$ and $b_{k} R_{2} b_{i_{3}}$ for some $k\in V$ such that $i_{3}<k<i_{2}$. Then there exist $k_{2}>i_{2},k_{1}<i_{3}$ such that $a_{i_{2},k_{2}}=0, a_{k,k_{2}}=1, a_{k_{1},i_{3}}=0, a_{k_{1},k}=1$. Now when $i_{3}<k<i_{1}$, 
if $a_{i_{1},k_{2}}=1 $ then $b_{i_{2}} R_{1} b_{i_{1}}$
 from (\ref{1}) which imply $b_{i_{2}}\prec b_{i_{1}}$ which is a contradiction. Hence $a_{i_{1},k_{2}}=0$ which imply $b_{i_{1}} R_{1} b_{k}$ which helps us to conclude $b_{i_{1}}\prec b_{i_{3}}$ from (\ref{3}) as $b_{k} R_{2} b_{i_{3}}$. Again when $i_{1}<k<i_{2}$, if
 $a_{k_{1},i_{1}}=0$ then $b_{k} R_{2} b_{i_{1}}$ imply $b_{i_{2}}\prec b_{i_{1}}$ from (\ref{3}) (as $b_{i_{2}} R_{1} b_{k}$) which contradicts our assumption. Hence $a_{k_{1},i_{1}}=1$ which imply $b_{i_{1}}\prec b_{i_{3}}$ as earlier.  

\vspace{0.3em}
\noindent \textbf{Case (v)} $i_{3}<i_{2}<i_{1}$

\noindent We need the following observations to prove this case.

\noindent \textit{Observations:}
\begin{enumerate}

\item First note $a_{i_{3},i_{2}}=a_{i_{2},i_{1}}=1$ as $i_{3}<i_{2}$ and $i_{2}<i_{1}$.

\item Next if $b_{i_{1}} R_{1} b_{r}$ for any $r<i_{1}$ then there exist some $\bf{k_{2}}>i_{1}$ such that $a_{i_{1},k_{2}}=0, a_{r,k_{2}}=1$.

\item Next $b_{m} R_{2} b_{i_{2}}$ for any $m>i_{2}$ imply existence of some $\bf{k_{1}}<i_{2}$ such that $a_{k_{1},i_{2}}=0, a_{k_{1},m}=1$. Now if $i_{3}<k_{1}<i_{2}$ then $a_{i_{3},i_{2}}$ becomes zero as $a_{k_{1},i_{2}}=0$ is up open which imply $b_{i_{3}}\prec b_{i_{2}}$ which contradicts our assumption. Hence $k_{1}<i_{3}$.

\item Again $b_{i_{2}} R_{1} b_{j}$ for any $j<i_{2}$ ensure existence of some $\bf{k_{2}^{'}}>i_{2}$ satisfying $a_{i_{2},k_{2}^{'}}=0, a_{j,k_{2}^{'}}=1$. Clearly $k_{2}^{'}>i_{1}$ otherwise (\ref{4p1}) gets contradicted for vertices $\{j,i_{2},k_{2}^{'},i_{1}\}$.

\item Similarly $b_{l} R_{2} b_{i_{3}}$ for any $l>i_{3}$ ensure existence of some $\bf{k_{1}^{'}}<i_{3}$ satisfying $a_{k_{1}^{'},i_{3}}=0, a_{k_{1}^{'},l}=1$.

\item Let $b_{i_{2}} R_{1} b_{i_{3}}$. Then $k_{2}^{'}>i_{1}$ from observation $4$. Now if $b_{p} R_{2} b_{i_{2}}$ for any $p>i_{2}$ then $k_{1}<i_{3}$ from observation $3$. Next if $a_{k_{1},i_{3}}=1$ then
condition \ref{5pt1} gets contradicted for vertices $\{k_{1},i_{3},i_{2},p,k_{2}^{'}\}$. Hence $a_{k_{1},i_{3}}=0$, which imply $b_{p} R_{2} b_{i_{3}}$ from (\ref{2}).

\item Let $b_{i_{2}} R_{2} b_{i_{3}}$. Then $k_{1}^{'}<i_{3}$ from observation $5$. Now if $b_{s} R_{2} b_{i_{2}}$ for any $s>i_{2}$ then $k_{1}<i_{3}$ from observation $3$. Next if $k_{1}^{'}<k_{1}$ then (\ref{4p1}) gets contradicted for vertices $\{k_{1}^{'},k_{1},i_{2},s\}$. 
Hence $k_{1}<k_{1}^{'}<i_{3}$ which imply $a_{k_{1},i_{3}}=0$ as $a_{k_{1}^{'},i_{3}}=0$ is up open. Thus we get $b_{s} R_{2} b_{i_{3}}$ from (\ref{2}). 
\end{enumerate}

\noindent \textit{Main proof:} Let $b_{i_{1}} R_{1} b_{i_{2}}$. Now if $b_{i_{2}} R_{1} b_{j}$ for any $j<i_{2}$ then using observations $2$ and $4$ we note  $k_{2}^{'}>k_{2}$ otherwise (\ref{4p1}) gets contradicted for the vertices $\{j,i_{2},k_{2}^{'},k_{2}\}$. Now $a_{i_{1},k_{2}^{'}}=0$ clearly as $a_{i_{1},k_{2}}=0$ is right open. This imply $b_{i_{1}} R_{1} b_{j}$ from (\ref{1}) which imply $b_{i_{1}}\prec b_{j}$. 
\vspace{0.3em}

\noindent Now if $b_{i_{2}} R_{1} b_{i_{3}}$, we get $b_{i_{1}}\prec b_{i_{3}}$ from last paragraph. Again if $b_{i_{2}} R_{2} b_{i_{3}}$ then $b_{i_{1}}\prec b_{i_{3}}$ from (\ref{3}) as $b_{i_{1}} R_{1} b_{i_{2}}$ and $i_{3}<i_{2}<i_{1}$. When $b_{i_{2}} R_{1} b_{k}$ and $b_{k} R_{2} b_{i_{3}}$ for some $k\in V$ such that $i_{3}<k<i_{2}$, we get $b_{i_{1}} R_{1} b_{k}$ from above paragraph. Hence we can  conclude $b_{i_{1}}\prec b_{i_{3}}$ from (\ref{3}) as $b_{k} R_{2} b_{i_{3}}$. 

\vspace{0.3em}
\noindent Now let $b_{i_{1}} R_{2} b_{i_{2}}$. If $b_{i_{2}} R_{1} b_{i_{3}}$ then using observations $3,4,6$ one can get $b_{i_{1}} R_{2} b_{i_{3}}$, i.e; $b_{i_{1}}\prec b_{i_{3}}$.  Again when $b_{i_{2}} R_{2} b_{i_{3}}$ then $b_{i_{1}} R_{2} b_{i_{3}}$ follows from observation $7$, i.e; $b_{i_{1}}\prec b_{i_{3}}$.
Now if $b_{i_{2}} R_{1} b_{k}$ and $b_{k} R_{2} b_{i_{3}}$ for some $k\in V$ such that $i_{3}<k<i_{2}$ then if $a_{k_{1},k}=1$, condition \ref{5pt1} gets contradicted for vertices $\{k_{1},k,i_{2},i_{1},k_{2}^{'}\}$ (observations $3,4$). Hence $a_{k_{1},k}=0$ which imply $b_{i_{1}} R_{2} b_{k}$ which helps us to conclude $b_{i_{1}}\prec b_{i_{3}}$ from (\ref{3}) as $b_{k} R_{2} b_{i_{3}}$.

 \vspace{0.3em}
\noindent Now let $b_{i_{1}} R_{1} b_{k}$ and $b_{k} R_{2} b_{i_{2}}$ for some $k\in V$ such that $i_{2}<k<i_{1}$. 
 If $b_{i_{2}} R_{1} b_{i_{3}}$ then from observations $3,4,6$ we get $b_{k} R_{2} b_{i_{3}}$ which helps us to conclude $b_{i_{1}}\prec b_{i_{3}}$ from (\ref{3}) as $b_{i_{1}} R_{1} b_{k}$.
Again if $b_{i_{2}} R_{2} b_{i_{3}}$ then $b_{k} R_{2} b_{i_{3}}$ from
observation $7$ which helps us to conclude $b_{i_{1}}\prec b_{i_{3}}$ from (\ref{3}) as $b_{i_{1}} R_{1} b_{k}$.

\vspace{0.3em}
\noindent Now if $b_{i_{2}} R_{1} b_{k^{'}}$ and $b_{k^{'}} R_{2} b_{i_{3}}$ for some $k^{'}\in V$ such that $i_{3}<k^{'}<i_{2}$. Then if $k_{1}^{'}<k_{1}$ (observations $3,5$) then $a_{k_{1},k^{'}}=1$ applying (\ref{4p1}) on vertices $\{k_{1}^{'},k_{1},k^{'},k\}$. Therefore condition \ref{5pt1} of Lemma 
\ref{lm1} gets contradicted for vertices $\{k_{1},k^{'},i_{2},k,k_{2}^{'}\}$ (observations $3,4$). Hence we get $k_{1}<k_{1}^{'}<i_{3}$. Thus $a_{k_{1},i_{3}}$ becomes zero as $a_{k_{1}^{'},i_{3}}=0$ is up open, which imply $b_{k} R_{2} b_{i_{3}}$ from (\ref{2}). Hence from (\ref{3}) we get $b_{i_{1}}\prec b_{i_{3}}$ as $b_{i_{1}} R_{1} b_{k}$ and $i_{3}<k<i_{1}$.
 
\vspace{0.3em}
 
\noindent \textbf{Case(vi)} $i_{2}<i_{1}<i_{3}$ 

\noindent Lets assume $b_{i_{3}}\prec b_{i_{1}}$ on contrary. We do the following observations which we will use throughout the proof.

\noindent \textit{Observations:}
\begin{enumerate}

\item First note that $a_{i_{2},i_{1}}=1$ as $i_{2}<i_{1}$.

\item Next if $b_{i_{1}} R_{1} b_{r}$ for any $r<i_{1}$ then there exists $\bf{k_{2}}>i_{1}$ such that $a_{i_{1},k_{2}}=0, a_{r,k_{2}}=1$. Now if $i_{1}<k_{2}<i_{3}$ then $a_{i_{1},i_{3}}$ becomes zero as $a_{i_{1},k_{2}}=0$ is right open, which imply $b_{i_{1}}\prec b_{i_{3}}$. But this contradicts our assumption. Hence $k_{2}>i_{3}$.

\item Again if $b_{m} R_{2} b_{i_{2}}$ for any $m>i_{2}$ then there exists $\bf{k_{1}}<i_{2}$ satisfying $a_{k_{1},i_{2}}=0, a_{k_{1},m}=1$.

\item If $b_{l} R_{2} b_{i_{1}}$ for any $l>i_{1}$ then there exists $\bf{k_{1}^{'}}<i_{1}$ satisfying $a_{k_{1}^{'},i_{1}}=0, a_{k_{1}^{'},l}=1$. Now if $i_{2}<k_{1}^{'}<i_{1}$ then (\ref{4p1}) gets contradicted for vertices $\{i_{2},k_{1}^{'},i_{1},l\}$ using observation $1$. Hence $k_{1}^{'}<i_{2}$.

\item Again if $b_{i_{3}} R_{1} b_{j}$ for any $j<i_{3}$ ensure existence of some $\bf{k_{2}^{'}}>i_{3}$ such that $a_{i_{3},k_{2}^{'}}=0, a_{j,k_{2}^{'}}=1$.

\item Let $b_{i_{1}} R_{1} b_{i_{2}}$. Then  $k_{2}>i_{3}$ from observation $2$. Now if $b_{j} R_{2} b_{i_{1}}$ for any $i_{1}<j<k_{2}$ then from observation $4$ we get $k_{1}^{'}<i_{2}$.   Again if $a_{k_{1}^{'},i_{2}}=1$ then condition \ref{5pt1} gets contradicted for vertices $\{k_{1}^{'},i_{2},i_{1},j,k_{2}\}$. Hence $\bf{a_{k_{1}^{'},i_{2}}=0}$.

\item Let $b_{i_{1}} R_{2} b_{i_{2}}$.  Then $k_{1}<i_{2}$ from observation $3$. Now if $b_{l} R_{2} b_{i_{1}}$ holds for any $l>i_{1}$ then $k_{1}^{'}<i_{2}$ from observation $4$. Moreover $\bf{k_{1}^{'}<k_{1}}$ otherwise (\ref{4p1}) gets contradicted for vertices $\{k_{1},k_{1}^{'},i_{1},l\}$.

\end{enumerate}

\noindent \textit{Main proof:}

\noindent Let $b_{i_{1}} R_{1} b_{i_{2}}$. Now if $b_{i_{3}} R_{1} b_{i_{1}}$ holds then if $k_{2}^{'}>k_{2}$ then (\ref{4p1}) gets contradicted for the vertices $\{i_{2},i_{1},k_{2},k_{2}^{'}\}$ (observations $2,5$). Again if $i_{3}<k_{2}^{'}<k_{2}$ then $a_{i_{3},k_{2}}$ becomes zero as $a_{i_{3},k_{2}^{'}}=0$ is right open, which imply $b_{i_{3}} R_{1} b_{i_{2}}$ from (\ref{1}), i.e; $b_{i_{3}}\prec b_{i_{2}}$ which is a contradiction. Now if $b_{i_{3}} R_{2} b_{i_{1}}$ then  $k_{1}^{'}<i_{2}$ from observation $4$. Again $a_{k_{{1}^{'}},i_{2}}=0$ from observation $6$. But in this case $b_{i_{3}} R_{2} b_{i_{2}}$ from (\ref{2}) imply $b_{i_{3}}\prec b_{i_{2}}$ which is not true.
Now if $b_{i_{3}} R_{1} b_{k}$ and $b_{k} R_{2} b_{i_{1}}$ for some $k\in V$ such that $i_{1}<k<i_{3}$. Note that $k_{1}^{'}<i_{2}$
from observation $4$. Again $a_{k_{1}^{'},i_{2}}=0$ from observation $6$.
Hence we get $b_{k} R_{2} b_{i_{2}}$ which along with $b_{i_{3}} R_{1} b_{k}$ helps us to conclude  $b_{i_{3}}\prec b_{i_{2}}$ from (\ref{3}), which is not true.
\vspace{0.3em}

\noindent Now let $b_{i_{1}} R_{2} b_{i_{2}}$. 
Now if $b_{i_{3}} R_{1} b_{i_{1}}$ then $b_{i_{3}}\prec b_{i_{2}}$ from (\ref{3}) as $b_{i_{1}} R_{2} b_{i_{2}}$, which is a contradiction. 
If $b_{i_{3}} R_{2} b_{i_{1}}$ then $k_{1}^{'}<k_{1}$ from observations $3,4,7$. Hence $a_{k_{1}^{'},i_{2}}=0$ as $a_{k_{1},i_{2}}=0$ is up open, which imply $b_{i_{3}} R_{2} b_{i_{2}}$ from (\ref{2}), i.e; $b_{i_{3}}\prec b_{i_{2}}$ which is a contradiction. Again if $b_{i_{3}} R_{1} b_{k}$ and $b_{k} R_{2} b_{i_{1}}$ for some $k\in V$ such that $i_{1}<k<i_{3}$ then $k_{2}^{'}>i_{3}$ and $k_{1}^{'}<i_{1}$ follows from  observations $4,5$. Again note $k_{1}^{'}<k_{1}$ from observation $7$.
Now $a_{k_{1}^{'},i_{2}}$ must be zero as $a_{k_{1},i_{2}}=0$ is up open, which imply $b_{k} R_{2} b_{i_{2}}$ from (\ref{2}) which helps us to conclude $b_{i_{3}}\prec b_{i_{2}}$ from (\ref{3}) as $ b_{i_{3}} R_{1} b_{k}$, which again contradicts our assumption. 
\vspace{0.3em}

\noindent Now let $b_{i_{1}} R_{1} b_{k}$ and $b_{k} R_{2} b_{i_{2}}$ for some $k\in V$ such that $i_{2}<k<i_{1}$. Then $k_{2}>i_{3}$ from observation $2$. 
 Now if $b_{i_{3}} R_{1} b_{i_{1}}$ then $k_{2}^{'}$ (observation $5$) can not be greater than $k_{2}$ as in that case (\ref{4p1}) gets contradicted for vertices $\{k,i_{1},k_{2},k_{2}^{'}\}$.
Again if $i_{3}<k_{2}^{'}<k_{2}$ then $a_{i_{3},k_{2}}=0$ as $a_{i_{3},k_{2}^{'}}=0$ is right open, which imply $b_{i_{3}} R_{1} b_{k}$ from (\ref{1}) which helps us to conclude $b_{i_{3}}\prec b_{i_{2}}$ from (\ref{3}) as $b_{k} R_{2} b_{i_{2}}$, which is again a contradiction.

\vspace{0.3 em}
\noindent Now if $b_{i_{3}} R_{2} b_{i_{1}}$ then $k_{1}^{'}<i_{2}$ from observation $4$.  
 Now if $k_{1}<k_{1}^{'}<i_{2}$ (observations $3,4$) then $a_{k_{1}^{'},k}=1$ applying (\ref{4p1}) on vertices $\{k_{1},k_{1}^{'},k,i_{3}\}$. Now applying condition \ref{5pt1} on vertices $\{k_{1}^{'},k,i_{1},i_{3},k_{2}\}$ one can find a contradiction. Now if $k_{1}^{'}<k_{1}$ then $a_{k_{1}^{'},i_{2}}$ becomes zero as $a_{k_{1},i_{2}}=0$ is up open, which imply $b_{i_{3}} R_{2} b_{i_{2}}$ from (\ref{2}), i.e, $b_{i_{3}}\prec b_{i_{2}}$ which is a contradiction.
 
\vspace{0.3em}

\noindent Again  if $b_{i_{3}} R_{1} b_{k}^{'} $ and $b_{k}^{'} R_{2} b_{i_{1}}$ for some $k^{'}\in V$ such that $i_{1}<k^{'}<i_{3}$ then $k_{1}^{'}<i_{2}$ from observation $4$. Now if $k_{1}<k_{1}^{'}<i_{2}$ then $a_{k_{1}^{'},k}$ become one from (\ref{4p1}) applying or vertices $\{k_{1},k_{1}^{'},k,k^{'}\}$. Hence applying condition \ref{5pt1} on vertices $\{k_{1}^{'},k,i_{1},k^{'},k_{2}\}$ one can get a contradiction. Now if $k_{1}^{'}<k_{1}$ then $a_{k_{1}^{'},i_{2}}$ becomes zero as $a_{k_{1},i_{2}}=0$ is up open, which imply $b_{k^{'}} R_{2} b_{i_{2}}$ from (\ref{2}) which helps us to conclude $b_{i_{3}}\prec b_{i_{2}}$ from (\ref{3}) as $b_{i_{3}} R_{1} b_{k^{'}}$, which is again a contradiction. 
\end{document}